\allowdisplaybreaks \numberwithin{equation}{section}
\theoremstyle{plain}
\newtheorem{prop}{Proposition}[section]
\newtheorem{teo}[prop]{Theorem}
\newtheorem{cor}[prop]{Corollary}
\newtheorem{fact}[prop]{Fact}
\newtheorem{lem}[prop]{Lemma}
\newtheorem{prob}[prop]{Problem}
\theoremstyle{remark}
\newtheorem{oss}[prop]{Remark}
\definecolor{red}{rgb}{1,0,0}
\definecolor{green}{rgb}{0,1,0.2}
\title{Components of moduli spaces of spin curves with the expected codimension II}
\author{Luca Benzo}
\date{}
\begin{document}
\maketitle
\footnotetext{\noindent 2000 {\em Mathematics Subject Classification}. 14H10, 14H51.
\newline \noindent{{\em Keywords and phrases.} Spin curves, Theta characteristics, Moduli space, Gaussian map.}}
\begin{abstract}
\noindent We prove that for all integers $r \geq 2$ and $g \geq \lfloor \frac{r^2+10r+1}{4} \rfloor$ there exists a component of the locus $\mathcal{S}^r_g$ of spin curves with a theta characteristic $L$ such that $h^0(L) \geq r+1$ and $h^0(L)\equiv r+1 (\text{mod } 2)$ which has expected codimension $\binom{r+1}{2}$ inside the moduli space $\mathcal{S}_g$ of spin curves of genus $g$.
\end{abstract}
\tableofcontents
\newpage
\begin{section}{Introduction}
Let $g \geq 1$ be an integer and let $\mathcal{S}_g$ be the moduli space of smooth spin curves parameterizing pairs $(C,L)$ where $C$ is a smooth curve of genus $g$ and $L$ is a theta-characteristic i.e. a line bundle on $C$ such that $L^2 \cong \omega_C$. It is well-known (see \cite{MU}) that $\mathcal{S}_g$ consists of exactly two connected components, $\mathcal{S}^{\text{odd}}_g$ and $\mathcal{S}^{\text{even}}_g$, depending on the parity of $h^{0}(L)$.\\
It is therefore natural to consider, for a fixed integer $r \geq 0$, the sublocus $\mathcal{S}^r_g$ of $\mathcal{S}^{\text{odd}}_g$ or $\mathcal{S}^{\text{even}}_g$ defined by
$$\mathcal{S}^{r}_{g} := \left\{(C,L) \in \mathcal{S}_g : h^{0}(L) \geq r+1 \hbox{ and } h^{0}(L) \equiv r+1 (\text{mod } 2)\right\}.$$
In \cite{HA} Harris proved that each irreducible component of $\mathcal{S}^r_g$ has dimension $\geq 3g-3- \binom{r+1}{2}$. Since the map $\pi:\mathcal{S}_g \rightarrow \mathcal{M}_g$ sending a point $(C,L)$ to $[C]$ is finite, this is equivalent to say that each irreducible component of $\mathcal{S}^r_g$ has codimension less than or equal to $\binom{r+1}{2}$ in $\mathcal{S}_g$. If the codimension in $\mathcal{S}_g$ of a component $V_{r,g} \subset \mathcal{S}^r_g$ equals $\binom{r+1}{2}$, we also say that $V_{r,g}$ has the \emph{expected codimension (in $\mathcal{S}_g$)}. It is natural to pose the following \emph{geography problem}:
\begin{prob}
For which pairs of positive integers $(r,g)$ there exists an irreducible component of $\mathcal{S}^r_g$ having expected codimension?
\end{prob}
An obvious necessary condition for the existence of such a component is that the inequality $3g-3- \binom{r+1}{2} \geq 0$ must be satisfied, i.e. points $(r,g)$ for which there exists a component of $\mathcal{S}^r_g$ with the expected codimension must lie above the parabola $g=\frac{r(r+1)}{6}+1$.\\
Inside this region of the plane $r,g$, many results are known for small $r$ (cf. \cite{BEA}, \cite{TEI} and \cite{FA}).
Moreover, in our previous paper \cite{BE} we were able to prove the following
\begin{teo}
\label{teobenzo4}
For all integers $r \geq 2$ and $g \geq {r+2 \choose 2}$, the locus $\mathcal{S}^r_{g}$ has an irreducible component of codimension ${r+1 \choose 2}$ in $\mathcal{S}_{g}$.
\end{teo}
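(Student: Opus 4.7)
The plan is to construct an irreducible family $\mathcal{F}$ of pairs $(C,L)$, with $L$ a theta-characteristic of the prescribed parity and $h^0(L)\geq r+1$, whose image in $\mathcal{S}_g$ has dimension exactly $3g-3-\binom{r+1}{2}$. Harris's theorem already forces each component of $\mathcal{S}^r_g$ to have codimension at most $\binom{r+1}{2}$ in $\mathcal{S}_g$, so producing such an $\mathcal{F}$ and verifying that its general point is a smooth point of $\mathcal{S}^r_g$ of the correct codimension is enough to conclude.

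The natural source of spin curves with many sections is a surface. I would look for a smooth projective surface $S$ carrying a line bundle $M$ such that, for a general smooth curve $C$ in a suitable linear system $|D|$ with $D$ numerically of the form $2M-K_S$, adjunction gives $\omega_C\cong(M^{\otimes 2})|_C$ and hence $L:=M|_C$ is automatically a theta-characteristic. In the range $g\geq \binom{r+2}{2}$ a flexible choice is a rational surface (for instance a suitable blow-up of $\mathbb{P}^2$ or a Hirzebruch surface $\mathbb{F}_n$) with $M$ chosen so that $h^0(S,M)=r+1$ and the restriction $H^0(S,M)\to H^0(C,L)$ is an isomorphism. The numerical shape $\binom{r+2}{2}=\binom{r+1}{2}+(r+1)$ appears naturally as the threshold at which $M$ and $|D|$ can coexist with the required cohomological properties, and the parity $h^0(L)\equiv r+1\pmod 2$ is checked on a single explicit example and then spread by semicontinuity.

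Counting dimensions via the moduli of $(S,M)$, the dimension of $|D|$ and the automorphisms of the configuration then yields $\dim\mathcal{F}=3g-3-\binom{r+1}{2}$; since the map $\mathcal{F}\to \mathcal{M}_g$, $(C,L)\mapsto [C]$, is generically finite, the image of $\mathcal{F}$ in $\mathcal{S}^r_g$ has the expected dimension and lands in the correct parity component.

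The main obstacle, and the technical heart of the argument, is showing that the closure of this image is actually an entire component of $\mathcal{S}^r_g$ and not contained in something larger. By standard deformation theory this reduces to the injectivity of the symmetric Petri map
\[
\mu\;:\;\operatorname{Sym}^2 H^0(C,L)\;\longrightarrow\; H^0(C,L^{\otimes 2})\;=\;H^0(C,\omega_C)
\]
at a general $(C,L)\in\mathcal{F}$, since this injectivity is exactly the condition under which the determinantal description of $\mathcal{S}^r_g$ realises the expected codimension $\binom{r+1}{2}$ and matches Harris's upper bound. Establishing it for a general curve in $|D|$—typically by lifting the multiplication map to $S$ via the restriction exact sequence and invoking cohomological vanishings on $S$, or by a degeneration to a special curve where injectivity is visible—is where the bulk of the work lies, and it is precisely what forces the lower bound $g\geq \binom{r+2}{2}$.
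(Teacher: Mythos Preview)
Your infinitesimal criterion is the wrong one. For a theta-characteristic $L$ the locus $\mathcal{S}^r_g$ sits inside $\mathcal{S}_g$, whose tangent space at $(C,L)$ is $H^0(\omega_C^{2})^\vee$, not $H^0(\omega_C)^\vee$. By Nagaraj's theorem the Zariski tangent space $T_{(C,L)}\mathcal{S}^r_g$ is the annihilator of the image of the \emph{Gaussian} (Wahl) map
\[
\Psi_L:\;\wedge^2 H^0(C,L)\;\longrightarrow\; H^0(C,\omega_C^{2}),
\]
and it is the injectivity of $\Psi_L$ that yields codimension exactly $\binom{r+1}{2}=\dim\wedge^2 H^0(L)$. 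The symmetric multiplication map $\mu:\operatorname{Sym}^2H^0(L)\to H^0(\omega_C)$ that you wrote down has domain of dimension $\binom{r+2}{2}$ and governs quadrics through the half-canonical image, not first-order deformations of the spin structure; its injectivity does not give the codimension you want. So the ``technical heart'' you isolate is not the relevant obstruction.

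Independently of this, the argument in the paper (and in \cite{BE}, where this particular theorem is actually proved) follows a completely different route: one builds by induction on $r$ a reducible nodal curve $X=C\cup E\subset\mathbb{P}^{r}$ with $C$ half-canonical in a hyperplane and $E$ an elliptic normal curve meeting $C$ suitably, verifies the cohomological package $(*)$ --- in particular $h^1(\mathcal{I}^2_X(2))=0$, which is equivalent to injectivity of $\Psi_{\mathcal{O}_X(1)}$ --- shows $X$ is smoothable to a smooth half-canonical curve $\Gamma$, and then invokes Farkas's inductive Proposition~\ref{farkasinductive} to pass from $g=\binom{r+2}{2}$ to all larger $g$. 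There is no surface construction. Your ``curves on a rational surface'' outline could in principle be made to work, but as stated it neither specifies the surface and linear systems nor addresses the correct map, so it does not yet constitute a proof.
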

thus ``covering" the plane region above the parabola $g={r+2 \choose 2}=\frac{(r+2)(r+1)}{2}$.
The purpose of the present paper is to prove a much stronger result, namely the following
\begin{teo}
\label{fundteo}
For all integers $r \geq 2$ and $g \geq g(r) := \lfloor \frac{r^2+10r+1}{4} \rfloor$, the locus $\mathcal{S}^r_{g}$ has an irreducible component of codimension ${r+1 \choose 2}$ in $\mathcal{S}_{g}$.
\end{teo}
The way Theorem \ref{fundteo} is proved also provides an existence result for irreducible components of $\mathcal{S}^r_g$ parameterizing pairs $(C,L)$ such that $L$ is \emph{very ample} (and in particular $C$ is not hyperelliptic).\\
In order to prove Theorem \ref{fundteo}, we will use the following fundamental result due to Farkas, which is proved by inductively smoothing a stable spin structure (see \cite{COR} for details) over a suitable reducible curve.
\begin{prop}[{\cite[Proposition 2.4 and Remark 2.5]{FA}}]
\label{farkasinductive}
Fix integers $r,g_0 \geq 1$. If $\mathcal{S}^r_{g_0}$ has an irreducible component $V_{r,g_0}$ of codimension $\binom{r+1}{2}$ in $\mathcal{S}_{g_0}$, then for every $g \geq g_0$ the locus $\mathcal{S}^r_g$ has an irreducible component $V_{r,g}$ of codimension $\binom{r+1}{2}$ in $\mathcal{S}_g$. Moreover, if the general point $(C,L) \in V_{r,g_0}$ has very ample $L$, then the general point $(C^{\prime},L^{\prime}) \in V_{r,g}$ has very ample $L^{\prime}$.
\end{prop}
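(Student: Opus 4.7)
The plan is to induct on $g - g_0 \geq 0$, so it suffices to prove the single step $g = g_0 + 1$; the general statement then follows by iteration. The strategy is to exhibit, starting from a general pair $(C_0, L_0) \in V_{r, g_0}$, a boundary point of $\overline{\mathcal{S}}_{g_0+1}$ lying in an elliptic-tail stratum that carries a stable spin structure of the right parity, then to produce $V_{r, g_0+1}$ as the component swept out by smoothings of this configuration.

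Concretely, fix a general $p \in C_0$ and a general $(E, q) \in \mathcal{M}_{1,1}$, and form the Cornalba quasi-stable curve $\tilde X = C_0 \cup \mathbb{P}^1 \cup E$ obtained by inserting an exceptional $\mathbb{P}^1$ at the node $p \sim q$ of $C_0 \cup_p E$. Define a line bundle $L_{\tilde X}$ on $\tilde X$ by setting $L_{\tilde X}|_{C_0} = L_0$, $L_{\tilde X}|_{\mathbb{P}^1} = \mathcal{O}(1)$, and $L_{\tilde X}|_E = \eta$, where $\eta$ is one of the three nontrivial $2$-torsion points of $E$ (so $h^0(\eta) = 0$). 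Then $(\tilde X, L_{\tilde X})$ is a stable spin curve of genus $g_0 + 1$ lying in the boundary divisor $\Delta_1 \subset \overline{\mathcal{S}}_{g_0+1}$; a direct gluing computation gives $h^0(\tilde X, L_{\tilde X}) = h^0(L_0) = r+1$, and the parity condition $h^0 \equiv r+1 \pmod 2$ is preserved because $h^0(\eta) = 0$. Cornalba's smoothability theorem then furnishes one-parameter smoothings of $(\tilde X, L_{\tilde X})$ to smooth spin curves $(C', L')$ of genus $g_0+1$, and upper semicontinuity of $h^0$ together with the parity constraint yields $h^0(L') \geq r+1$, so $(C', L') \in \mathcal{S}^r_{g_0+1}$.

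For the dimension count, let $W \subset \overline{\mathcal{S}}_{g_0+1}$ be the locus swept out by the $(\tilde X, L_{\tilde X})$ as $(C_0, L_0)$ varies in $V_{r, g_0}$, $p$ on $C_0$, and $(E, q, \eta)$ in the (finite-over-$\mathcal{M}_{1,1}$) family of genus-one spin curves with a marked point and nontrivial $\eta$. Then $\dim W = (3g_0 - 3 - \binom{r+1}{2}) + 1 + 1 = 3g_0 - 1 - \binom{r+1}{2}$, and smoothing the node on the $E$ side adds one more dimension, producing an irreducible family inside $\mathcal{S}_{g_0+1}$ of dimension $3g_0 - \binom{r+1}{2}$, i.e.\ codimension $\binom{r+1}{2}$. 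Harris's bound forces every component of $\mathcal{S}^r_{g_0+1}$ to have codimension at most $\binom{r+1}{2}$, so the smoothed family is dense in an irreducible component $V_{r, g_0+1}$ of the expected codimension. For the very ampleness claim, one uses that very ampleness is an open condition in the relative Picard scheme of a smooth family of curves, so it suffices to exhibit one smoothing for which $L'$ is very ample; a standard deformation argument, starting from the very ampleness of $L_{\tilde X}|_{C_0} = L_0$, then shows that a sufficiently general smoothing has very ample $L'$.

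The main obstacle is the dimension count: one must verify that the smoothing direction is genuinely transverse to $W$ inside $\overline{\mathcal{S}}_{g_0+1}$, equivalently that the general smoothed fiber has $h^0(L') = r+1$ exactly rather than jumping up. This is where Harris's codimension bound is decisive, together with irreducibility of the smoothed family and upper semicontinuity of $h^0$. The auxiliary technical points — preservation of parity along the degeneration, and transferring very ampleness from $L_0$ on $C_0$ to $L'$ on the generic smoothed $C'$ — become routine once Cornalba's compactification of $\mathcal{S}_g$ by stable spin curves, together with its boundary structure, is in hand.
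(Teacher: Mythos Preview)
The paper does not give its own proof of this statement; it is quoted from Farkas \cite{FA}, with only the one-line remark that it is ``proved by inductively smoothing a stable spin structure (see \cite{COR} for details) over a suitable reducible curve.'' Your outline follows exactly this strategy and is essentially Farkas's argument: attach an elliptic tail with an odd theta characteristic via an exceptional component, compute $h^0$ on the limit, and run a dimension count against Harris's bound.

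Two steps deserve more care than you give them. First, the passage from ``the smoothed family has dimension $3g_0-\binom{r+1}{2}$'' to ``it fills out a component of $\mathcal{S}^r_{g_0+1}$'' is not just upper semicontinuity plus Harris. The clean argument is: near $[\tilde X,L_{\tilde X}]$ the only boundary divisor of $\overline{\mathcal{S}}_{g_0+1}$ is the relevant lift $A_1$ of $\Delta_1$; on $A_1$ the condition $h^0\geq r+1$ reads exactly as $(C_0,L_0)\in\mathcal{S}^r_{g_0}$, so your $W$ is locally a full component of $\overline{\mathcal{S}}^r_{g_0+1}\cap A_1$; hence any component $V$ of $\overline{\mathcal{S}}^r_{g_0+1}$ through $[\tilde X,L_{\tilde X}]$ has $\dim V\leq\dim W+1$, while Harris's determinantal bound (which extends over $\overline{\mathcal{S}}_{g_0+1}$) gives $\dim V\geq\dim W+1$; equality forces $V\not\subset A_1$, and since $A_1$ is the only boundary divisor through the chosen point, the general point of $V$ is a smooth spin curve. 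Second, the very-ampleness assertion is not a ``standard deformation argument'': $L_{\tilde X}$ itself is \emph{not} very ample (it contracts $E\cup\mathbb{P}^1$), so openness alone is useless at the central fibre; one needs a limit-linear-series analysis to exclude that the general smoothing acquires a base point or fails to separate a pair of points, and this is the content of Farkas's Remark~2.5.
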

The core of the proof of our result is the construction of a reducible curve $X \subset \mathbb{P}^r$ which is the central fibre of a (flat) family $\mathcal{X} \rightarrow B$, $\mathcal{X} \subset \mathbb{P}^r \times B$, where $B$ is an irreducible algebraic scheme, such that the general fibre $\mathcal{X}_b$ is a smooth half-canonical curve (i.e. $\mathcal{O}_{\mathcal{X}_b}(1)$ is a theta-characteristic) having nice properties ensuring expected codimension of the component of $\mathcal{S}^r_g$ parameterizing it. Namely, we will require the \emph{Gaussian map} of the line bundle embedding $\mathcal{X}_b$ in $\mathbb{P}^r$ to be injective (see Subsection \ref{gaussianmap} for a precise definition).\\
The idea is, as in \cite{BE}, to build $X$ in such a way that the line bundle $\mathcal{O}_X(2)$ has degree $2p_a(X)-2$ and $h^1(\mathcal{O}_X(2))=1$. If the cohomology numbers of $\mathcal{O}_X(2)$ remain constant along the fibres of the family $\mathcal{X} \rightarrow B$ (a property that will follow from the $2$-normality of $X$), then $\mathcal{O}_{\mathcal{X}_b}(2)$ will be isomorphic to the (unique) line bundle of degree $2g(\mathcal{X}_b)-2$ and speciality index 1 on $\mathcal{X}_b$, the canonical one.\\
Assume for the sake of simplicity that $X := C \cup D$ is the nodal union of two irreducible components and that these components are 2-normal. Let $\Delta$ be the intersection scheme of $C$ and $D$. Note that the cohomology sequence associated to
\begin{equation}
\label{ix2opr2ox2}
0 \rightarrow \mathcal{I}_X(2) \rightarrow \mathcal{O}_{\mathbb{P}^r}(2) \rightarrow \mathcal{O}_X(2) \rightarrow 0
\end{equation}
gives immediately $h^1(\mathcal{O}_X(2))=h^2(\mathcal{I}_X(2))$ (an analogous property holds for $C$ and $D$), hence imposing the condition $h^1(\mathcal{O}_X(2))=1$ leaves only two possibilities:
\begin{itemize}
\item[(a)] \quad $0 \rightarrow \mathcal{I}_{X}(2) \rightarrow \underbrace{\mathcal{I}_C(2)}_{h^2=h^1(\mathcal{O}_C(2))=1} \oplus \quad \underbrace{\mathcal{I}_D(2)}_{h^2=0} \quad \rightarrow \underbrace{\mathcal{I}_{\Delta}(2)}_{h^1=0, \, h^2=0}  \rightarrow 0$
\end{itemize}
(the points of $\Delta$ impose independent conditions to hyperquadrics in $\mathbb{P}^r$ and one between $\mathcal{O}_C(2)$ and $\mathcal{O}_D(2)$, say the former, has speciality index 1)
\begin{itemize}
\item[(b)] \quad $0 \rightarrow \mathcal{I}_X(2) \rightarrow \quad \underbrace{\mathcal{I}_C(2)}_{h^2=0} \quad \oplus \quad \underbrace{\mathcal{I}_D(2)}_{h^2=0} \quad \rightarrow \underbrace{\mathcal{I}_{\Delta}(2)}_{h^1=1, \, h^2=0}  \rightarrow 0$
\end{itemize}
(the points of $\Delta$ impose one condition less to hyperquadrics in $\mathbb{P}^r$ and both $\mathcal{O}_C(2)$ and $\mathcal{O}_D(2)$ are nonspecial).\\
The construction carried out in the present paper is a far-reaching generalisation of the one in \cite{BE} and both are an incarnation of case (a). The (above remarked) fact that property $h^1(\mathcal{O}_C(2))=1$ holds for half-canonical curves clearly suggests the possibility to proceed by induction on $r$ (it would be interesting to understand whether it is possible to sharpen the result in Theorem \ref{fundteo} by exploiting an incarnation of case (b)).\\
On the other hand, extending Theorem \ref{teobenzo4} to Theorem \ref{fundteo} presents a series of interesting technical problems which require a significant amount of additional work, and whose solution constitutes one of the main points of interest of this paper, since it is in principle generalisable to many other situations dealing with degenerations of projective curves.\\
In particular, in the ``new" region of the plane $r,g$ that must be covered to prove Theorem \ref{fundteo}, the numerology of our inductive construction gives $h^1(N_{X/\mathbb{P}^r})>>0$ for $r >>0$, hence standard techniques fail to provide and argument for the smoothability of $X$ in $\mathbb{P}^r$ and have to be replaced by a delicate verification of the assumptions of our Lemma \ref{critical}, namely a suitable cohomological condition as well as the smoothness of the Hilbert scheme at the point $[X]$.\\
Techniques involving elementary transformations of vector bundles and varieties of secant divisors will play a major role as well.\\ \\

We work over the field of complex numbers. If $Y$ is an algebraic scheme and $X \subset Y$ is a closed subscheme with defining ideal sheaf $\mathcal{I}_{X/Y}$, we will denote by $N^{\vee}_{X/Y} := \mathcal{I}_{X/Y}/\mathcal{I}^2_{X/Y}$ the \emph{conormal sheaf of $X$ in $Y$} and by $N_{X/Y}$ its dual $\textbf{Hom}_{\mathcal{O}_X}(\mathcal{I}_{X/Y}/\mathcal{I}^2_{X/Y},\mathcal{O}_X)$, which we will call the \emph{normal sheaf of $X$ in $Y$}. If there is no ambiguity, we will possibly write $N^{\vee}_{X}$ ($N_X$) instead of $N^{\vee}_{X/Y}$ ($N_{X/Y}$). The same will happen with $\mathcal{I}_{X/Y}$ and $\mathcal{I}^2_{X/Y}$.\\
By a \emph{family} of curves and a \emph{deformation} of a curve we will always mean flat ones.\\
We will write $\text{Hilb}^r_{g,d}$ for the Hilbert scheme of curves of arithmetic genus $g$ and degree $d$ in $\mathbb{P}^r$.
\end{section}
\begin{section}{Preliminary results}
\begin{subsection}{Generalities about theta characteristics}
Theta characteristics were classically studied from a trascendental point of view for their connection with theta functions. Mumford presented in \cite{MU} a completely algebraic viewpoint of the subject, which was later carried on by Harris.
\begin{teo}[{\cite[Theorem]{MU}}, {\cite[Theorem 1.10]{HA}}]
\label{hm}
Let $\mathcal{S} \rightarrow B$ be a flat family over an irreducible algebraic scheme $B$ such that, for all $\lambda \in B$, the fibre $\mathcal{S}_{\lambda}$ is a Gorenstein reduced curve, and let $\mathcal{L} \in \emph{Pic}(\mathcal{S})$ such that $\mathcal{L}_{\lambda}^{2} \cong \omega_{\mathcal{S}_{\lambda}}$ for all $\lambda \in B$. Then:
\begin{itemize}
\item[$(i)$] the map $\rho: B \rightarrow \mathbb{N}_{0}$, $\lambda \mapsto h^{0}(\mathcal{S}_{\lambda},\mathcal{L}_{\lambda})$ is constant modulo 2;
\item[$(ii)$] the locus $B_r := \left\{\lambda \in B : \rho(\lambda)=r+1\right\}$ is empty or has codimension less than or equal to ${r+1 \choose 2}$ in $B$.
\end{itemize}
\end{teo}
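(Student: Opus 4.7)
The plan is to realize $R\pi_{*}\mathcal{L}$ as a two-term complex of locally free sheaves on $B$ carrying a natural alternating (skew-symmetric) self-duality induced by $\mathcal{L}^{2} \cong \omega_{\mathcal{S}/B}$, and then to deduce both parts of the theorem from the local structure of degeneracy loci for alternating maps.

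First I would choose a relative effective Cartier divisor $\mathcal{D} \subset \mathcal{S}$, finite and flat over $B$, of fibrewise degree $d \gg 0$, large enough that $\pi_{*}\mathcal{L}(-\mathcal{D})=0$ and $R^{1}\pi_{*}\mathcal{L}(\mathcal{D})=0$. Using the filtration $\mathcal{L}(-\mathcal{D}) \subset \mathcal{L} \subset \mathcal{L}(\mathcal{D})$ and pushing forward to $B$, one obtains an explicit two-term complex $E^{\bullet} = [E^{0} \xrightarrow{\phi} E^{1}]$ of locally free $\mathcal{O}_{B}$-modules of equal rank, quasi-isomorphic to $R\pi_{*}\mathcal{L}$, whose kernel and cokernel at $\lambda \in B$ compute $H^{0}(\mathcal{L}_{\lambda})$ and $H^{1}(\mathcal{L}_{\lambda})$ respectively. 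In particular the cohomology jumps of $\mathcal{L}_{\lambda}$ are governed by the single degeneracy locus of $\phi$.

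Next I would invoke relative Serre duality together with $\mathcal{L}^{2} \cong \omega_{\mathcal{S}/B}$: this gives a canonical isomorphism $R\pi_{*}\mathcal{L} \cong (R\pi_{*}\mathcal{L})^{\vee}[-1]$ in $D^{b}(B)$, which translates into an identification $E^{1} \cong (E^{0})^{\vee}$ under which $\phi$ becomes self-dual up to sign. The Gorenstein hypothesis on the fibres is crucial here, since it ensures $\omega_{\mathcal{S}/B}$ is a line bundle so that the duality pairings above are canonical. The main technical obstacle -- and, I expect, the hardest part of the proof -- is showing that this self-duality is alternating rather than symmetric; the necessary sign should drop out of a Koszul-style computation traced through the connecting homomorphisms in the relative duality pairing.

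Granted the alternating structure, both conclusions follow from classical facts on families of skew-symmetric maps between vector bundles of rank $n$. For (i), the rank of an alternating matrix is always even, so $h^{0}(\mathcal{L}_{\lambda}) = \dim \ker \phi_{\lambda}$ has parity equal to $n \pmod{2}$, which is constant on $B$. For (ii), the locus $\{\phi : \dim \ker \phi \geq k\}$ in the universal space of alternating maps on a rank-$n$ bundle has codimension $\binom{k}{2}$ when $k$ has the correct parity; pulling back along the classifying map $B \to \{\text{alternating maps}\}$ yields $\mathrm{codim}_{B}(B_{r}) \leq \binom{r+1}{2}$, which is precisely the desired bound.
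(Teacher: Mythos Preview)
The paper does not supply its own proof of this statement: Theorem~\ref{hm} is quoted directly from the literature (Mumford \cite{MU} and Harris \cite{HA}, Theorem~1.10) and no argument is given in the paper beyond the citation. There is therefore nothing in the paper to compare your proposal against.

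That said, your outline is essentially the Mumford--Harris argument. The representation of $R\pi_{*}\mathcal{L}$ by a two-term complex of locally free sheaves, the self-duality coming from $\mathcal{L}^{2}\cong\omega_{\mathcal{S}/B}$ and relative Grothendieck--Serre duality, and the reduction of both (i) and (ii) to the structure of degeneracy loci of alternating maps are exactly the ingredients in the original proofs. You have correctly identified the one genuinely delicate point, namely that the induced pairing is alternating rather than symmetric; this is the content of Mumford's computation. Once that is established, part (i) is the even-rank property of skew-symmetric matrices and part (ii) is the standard codimension estimate $\binom{k}{2}$ for the locus where an alternating map drops rank by at least $k$.

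One small remark: as stated in the paper, $B_r$ is defined by the \emph{equality} $\rho(\lambda)=r+1$, whereas your degeneracy-locus argument naturally bounds the codimension of the closed locus $\{\rho(\lambda)\geq r+1\}$. Since the parity is fixed by (i), the difference is the closed sublocus $\{\rho(\lambda)\geq r+3\}$; one must observe (or simply reinterpret the statement) that each irreducible component of $\{\rho\geq r+1\}$ already satisfies the bound, which is how the result is actually used later in the paper (e.g.\ in Proposition~\ref{corharris}).
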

Property $(i)$ gives full motivation for the study of the irreducible components of the loci
$\mathcal{S}^{r}_{g} := \left\{(C,L) \in \mathcal{S}_g : h^{0}(L) \geq r+1 \hbox{ and } h^{0}(L) \equiv r+1 (\text{mod } 2)\right\}$
and $\mathcal{M}^{r}_g := \pi(\mathcal{S}^r_g)$, where $\pi:\mathcal{S}_g \rightarrow \mathcal{M}_g$ is the map sending the point $(C,L)$ to $[C]$.
The fundamental fact we are interested in is the following
\begin{prop}
\label{corharris}
$\mathcal{S}^r_g$ is empty if and only if $r > \frac{g-1}{2}$. If $r \leq \frac{g-1}{2}$, for every irreducible component $V \subset \mathcal{S}^r_g$ one has
\begin{equation}
\label{dimv}
\dim V \geq 3g-3-{r+1 \choose 2}.
\end{equation}
\end{prop}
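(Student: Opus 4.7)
The proposition has two halves---an emptiness criterion and a lower bound on dimensions of components---and my plan is to derive the first from Clifford's theorem applied to theta characteristics and the second from Theorem \ref{hm}(ii).

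For the ``only if'' direction of the emptiness criterion I would argue by a direct application of Clifford's theorem. If $(C,L)\in\mathcal{S}^r_g$, then $L^2\cong\omega_C$ forces $\deg L=g-1$, and Serre duality together with the identification $\omega_C\otimes L^{-1}\cong L$ gives $h^1(L)=h^0(L)\geq r+1\geq 1$, so $L$ is an effective special line bundle. Clifford's inequality then yields $h^0(L)\leq(g-1)/2+1$, whence $r\leq(g-1)/2$. For the converse I would exhibit an explicit point of $\mathcal{S}^r_g$ on a hyperelliptic curve $C$ of genus $g$, using that $K_C\equiv(g-1)\mathfrak{g}^1_2$ and $2p_i\equiv\mathfrak{g}^1_2$ for each of the $2g+2$ Weierstrass points $p_i$. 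The classical Mumford parameterization \cite{MU} of theta characteristics in terms of subsets of the Weierstrass points then allows one to realize any prescribed $h^0(L)\geq r+1$ of matching parity as soon as $r\leq(g-1)/2$.

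For the dimension bound the plan is to invoke Theorem \ref{hm}(ii) directly. Setting $B=\mathcal{S}_g$ with its universal theta characteristic $\mathcal{L}$ and using that $\pi\colon\mathcal{S}_g\to\mathcal{M}_g$ is finite so that $\dim\mathcal{S}_g=3g-3$, the theorem gives that each stratum $B_s:=\{h^0(\mathcal{L}_\lambda)=s+1\}$ has codimension at most $\binom{s+1}{2}$ in $\mathcal{S}_g$. Any irreducible component $V\subset\mathcal{S}^r_g$ has a well-defined generic value $s+1\geq r+1$ of $h^0$ of the correct parity, and $V$ is the closure of a component of $B_s$. In the principal case $s=r$, Harris gives $\dim V\geq 3g-3-\binom{r+1}{2}$ at once; components with $s>r$ are already contained in $\mathcal{S}^{r+2}_g\subset\mathcal{S}^r_g$, and the analysis is handled by the same argument applied at the appropriate deeper stratum.

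The main obstacle should be the explicit non-emptiness construction: extracting from Mumford's parameterization that every admissible pair $(r,g)$ with the correct parity of $h^0$ is actually realized requires a combinatorial count of subsets of the Weierstrass points. Once this has been checked, the emptiness half reduces to a one-line Clifford argument and the dimension estimate to a direct invocation of Theorem \ref{hm}(ii).
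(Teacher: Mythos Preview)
Your approach is essentially the paper's: Clifford's theorem for emptiness, the hyperelliptic locus for non-emptiness, and Theorem~\ref{hm}(ii) for the dimension bound. The paper's proof is three sentences long and invokes exactly these three ingredients; you simply unpack them in more detail (particularly the hyperelliptic construction via Weierstrass points, which the paper leaves as the bare assertion that $\mathcal{M}^r_g$ contains the hyperelliptic locus).

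One small gap worth flagging: your treatment of components $V\subset\mathcal{S}^r_g$ whose generic $h^0$ equals $s+1>r+1$ does not work as written. Saying ``the analysis is handled by the same argument applied at the appropriate deeper stratum'' would give only $\dim V\geq 3g-3-\binom{s+1}{2}$, which is \emph{weaker} than the desired bound since $\binom{s+1}{2}>\binom{r+1}{2}$. The paper sidesteps this by simply citing Theorem~\ref{hm}(ii); the point is that Harris's result in \cite{HA} is genuinely a determinantal codimension estimate for the locus $\{h^0\geq r+1\}$, not just for the open stratum $\{h^0=r+1\}$, so every component of $\mathcal{S}^r_g$---regardless of the generic value of $h^0$---automatically has codimension at most $\binom{r+1}{2}$. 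You should invoke the theorem in that form rather than stratifying.
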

\begin{proof}
If $r > \frac{g-1}{2}$, then $\mathcal{S}^r_g$ is empty by Clifford's theorem on special divisors on smooth curves. On the other hand, the locus $\mathcal{M}^r_g$, $r \leq {\lfloor\frac{g-1}{2}\rfloor}$ is not empty, as it contains the hyperelliptic locus. The second assertion immediately follows from $(ii)$ of Theorem \ref{hm}.
\end{proof}
If equality holds in (\ref{dimv}), we say that $V$ has \emph{expected codimension (in $\mathcal{S}_g$)}.
\end{subsection}
\begin{subsection}{The Gaussian map}
\label{gaussianmap}
Let $X$ be a smooth projective variety and $L \in \text{Pic}(X)$. Let $R(L)$ be the kernel of the multiplication map $H^{0}(L) \otimes H^{0}(L) \rightarrow H^{0}(L^2)$. One has $R(L)=\wedge^2 H^0(L) \oplus I_2(L)$, where $I_2(L) := \ker \left\{ \text{Sym}^2(L) \rightarrow H^{0}(L^2) \right\}$ is the kernel of the restriction of the multiplication map to the 2-symmetric part of the tensor product. Consider the map
$$\Phi_L: R(L) \rightarrow H^{0}(\Omega^1_X \otimes L^2)$$
defined, locally on a Zariski open subset of $X$ over which $L$ is trivial,  by the correspondence
$$s \otimes t \mapsto s\,dt-t\,ds.$$
Clearly, $\Phi_L$ vanishes on symmetric tensors and one can consider the restriction
$$\Psi_L := {\Phi_L}_{|\wedge^2 H^0(L)}:\wedge^2 H^0(L) \rightarrow H^0(\Omega^1_X \otimes L^2)$$
which is called the \emph{Gaussian map of} $L$.\\
Let now $L$ be very ample and identify $X$ with its image in $\mathbb{P}^r$ via the embedding given by $|L|$. Dualizing the Euler sequence
$$0 \rightarrow \mathcal{O}_X \rightarrow H^{0}(\mathcal{O}_X(1))^{\vee} \otimes \mathcal{O}_X(1) \rightarrow {T_{\mathbb{P}^r}}_{|X} \rightarrow 0$$
and twisting by $\mathcal{O}_X(2)$, one obtains the exact sequence
$$0 \rightarrow {\Omega^1_{\mathbb{P}^r}}_{|X}(2) \rightarrow H^{0}(\mathcal{O}_X(1)) \otimes \mathcal{O}_X(1) \rightarrow \mathcal{O}_X(2) \rightarrow 0$$
whose associated cohomology sequence yields $R(L)=H^{0}({\Omega^1_{\mathbb{P}^r}}_{|X}(2))$. Moreover, $\Phi_L$ is exactly the map associated in cohomology to the map $\alpha$ in the conormal sequence twisted by $\mathcal{O}_X(2)$
$$0 \rightarrow N^{\vee}_X(2) \rightarrow {\Omega^1_{\mathbb{P}^r}}_{|X}(2) \xrightarrow{\alpha} \Omega^1_X(2) \rightarrow 0.$$
and thus one has
\begin{equation}
\label{h0nveex2}
H^{0}(N^{\vee}_X(2)) = \ker \Phi_L=\ker \Psi_L \oplus I_2(L)
\end{equation}
where, as $L$ is very ample, $I_2(L) \cong H^{0}(\mathcal{I}_X(2))$. In particular, $\Psi_L$ is injective if and only if $H^{0}(\mathcal{I}_X(2)) \cong H^{0}(N^{\vee}_X(2))$. By the definition of $N^{\vee}_X$ one has the short exact sequence
\begin{equation}
\label{eq1}
0 \rightarrow \mathcal{I}^2_X(2) \rightarrow \mathcal{I}_X(2) \rightarrow N^{\vee}_X(2) \rightarrow 0.
\end{equation}
Since $X$ is not contained in any hyperplane of $\mathbb{P}^r$, one has $H^{0}(\mathcal{I}^2_X(2))=(0)$ (the singular locus of a hyperquadric is empty or a linear space), hence the cohomology sequence of (\ref{eq1}) immediately gives:
\begin{prop}
\label{h1i2c}
Notation as above, if $H^1(\mathcal{I}^2_X(2))=(0)$, then $\Psi_L$ is injective. If moreover $X$ is 2-normal, then the converse holds.
\end{prop}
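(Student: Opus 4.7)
The plan is to read off both implications directly from the cohomology long exact sequence of (\ref{eq1}), combined with the two facts already established in the paragraph preceding the statement: the vanishing $H^{0}(\mathcal{I}^2_X(2))=0$ (since $X$ is nondegenerate in $\mathbb{P}^r$), and the identification (\ref{h0nveex2}), which says that $\Psi_L$ is injective if and only if the natural map $H^{0}(\mathcal{I}_X(2)) \to H^{0}(N^{\vee}_X(2))$ is an isomorphism.

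First I would take cohomology of (\ref{eq1}). Using $H^{0}(\mathcal{I}^2_X(2))=0$, this yields the four-term exact sequence
$$0 \to H^{0}(\mathcal{I}_X(2)) \to H^{0}(N^{\vee}_X(2)) \to H^{1}(\mathcal{I}^2_X(2)) \to H^{1}(\mathcal{I}_X(2)).$$
If $H^{1}(\mathcal{I}^2_X(2))=0$, then the first arrow is an isomorphism and (\ref{h0nveex2}) forces $\ker \Psi_L = 0$, proving the forward direction. For the converse, assume $\Psi_L$ is injective and $X$ is 2-normal. Twonormality means exactly that the map $H^{0}(\mathcal{O}_{\mathbb{P}^r}(2)) \to H^{0}(\mathcal{O}_X(2))$ is surjective; combined with $H^{1}(\mathcal{O}_{\mathbb{P}^r}(2))=0$, the cohomology sequence of $0 \to \mathcal{I}_X(2) \to \mathcal{O}_{\mathbb{P}^r}(2) \to \mathcal{O}_X(2) \to 0$ then gives $H^{1}(\mathcal{I}_X(2))=0$. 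On the other hand, injectivity of $\Psi_L$ together with (\ref{h0nveex2}) says that $H^{0}(\mathcal{I}_X(2)) \to H^{0}(N^{\vee}_X(2))$ is surjective, so the connecting homomorphism $H^{0}(N^{\vee}_X(2)) \to H^{1}(\mathcal{I}^2_X(2))$ vanishes. Exactness then embeds $H^{1}(\mathcal{I}^2_X(2))$ into $H^{1}(\mathcal{I}_X(2))=0$, giving the desired vanishing.

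There is no real obstacle; the statement is a bookkeeping consequence of the cohomology long exact sequence. The one point that deserves a line of justification is the compatibility of the inclusion $H^{0}(\mathcal{I}_X(2)) \hookrightarrow H^{0}(N^{\vee}_X(2))$ induced by (\ref{eq1}) with the inclusion $I_2(L) \hookrightarrow \ker \Psi_L \oplus I_2(L) = H^{0}(N^{\vee}_X(2))$ coming from (\ref{h0nveex2}); this follows from the commutative diagram relating (\ref{eq1}) to the twisted conormal sequence used to define $\Phi_L$ in cohomology, and together with $I_2(L) \cong H^{0}(\mathcal{I}_X(2))$ makes the two arrows coincide.
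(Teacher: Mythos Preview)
Your argument is correct and is exactly the approach the paper intends: the paper merely says that the claim ``immediately'' follows from the cohomology sequence of (\ref{eq1}) together with $H^{0}(\mathcal{I}^2_X(2))=0$, and you have spelled out precisely those details (including the role of 2-normality in killing $H^{1}(\mathcal{I}_X(2))$ for the converse). The compatibility remark at the end is a nice sanity check but not strictly necessary, since the paper already phrases the injectivity criterion as $H^{0}(\mathcal{I}_X(2)) \cong H^{0}(N^{\vee}_X(2))$ via the quotient map in (\ref{eq1}).
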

The map $\Psi_L$ has a very interesting relation with spin curves. For a point $(C,L) \in \mathcal{S}^r_g$, the forgetful map $\pi:\mathcal{S}_g \rightarrow \mathcal{M}_g$ gives a natural identification $T_{[C,L]}\mathcal{S}_g = T_{[C]}\mathcal{M}_g=H^{0}(\omega^2_C)^{\vee}$ between the Zariski tangent spaces to the moduli stacks. Nagaraj (\cite{NAG}) has shown that
\begin{teo}[{\cite[Theorem 1]{NAG}}]
Let $(C,L) \in \mathcal{S}^r_g$ and let $\Psi_L:\wedge^2 H^{0}(L) \rightarrow H^{0}(\omega^2_C)$ be the Gaussian map. Then
$$T_{(C,L)}\mathcal{S}^r_g \cong \left(\emph{im}\, \left(\Psi_L  \right)^{\bot}\right)^{\vee} \subset T_{(C,L)}\mathcal{S}_g.$$
In particular, if $\Psi_L$ is injective, then $\dim T_{(C,L)}\mathcal{S}^r_g =3g-3-{r+1 \choose 2}$.
\end{teo}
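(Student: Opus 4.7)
Because the forgetful map $\pi : \mathcal{S}_g \to \mathcal{M}_g$ is finite and \'etale \cite{MU}, one has the identifications $T_{(C,L)}\mathcal{S}_g = T_{[C]}\mathcal{M}_g = H^1(T_C) = H^0(\omega_C^2)^{\vee}$. A first-order deformation $\xi \in H^1(T_C)$ of $C$ gives a flat family $\mathcal{C} \to \mathrm{Spec}\,\mathbb{C}[\epsilon]$ to which $L$ lifts canonically as $\mathcal{L} \in \mathrm{Pic}(\mathcal{C})$ satisfying $\mathcal{L}^2 \cong \omega_{\mathcal{C}/\mathrm{Spec}\,\mathbb{C}[\epsilon]}$: the theta characteristic hypothesis determines $\mathcal{L}$ uniquely modulo the discrete $2$-torsion in the relative Picard scheme. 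By cohomology and base change, $\xi \in T_{(C,L)}\mathcal{S}^r_g$ if and only if $h^0(\mathcal{L}) \geq r+1$; taking the long exact sequence associated to $0 \to L \xrightarrow{\cdot \epsilon} \mathcal{L} \to L \to 0$ shows this is a rank condition on the connecting homomorphism
\[
\mu_\xi : H^0(L) \to H^1(L),
\]
and in the generic case $h^0(L) = r+1$ the condition simply reads $\mu_\xi = 0$.

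\textbf{Identification of $\mu$ with $\Psi_L$.} Letting $\xi$ vary yields a linear map $\mu : H^1(T_C) \to \mathrm{Hom}(H^0(L), H^1(L))$. The theta characteristic condition gives the Serre self-duality $H^1(L) \cong H^0(L)^{\vee}$, so $\mu$ takes values in $H^0(L)^{\vee} \otimes H^0(L)^{\vee}$, and its transpose is a linear map
\[
\mu^\vee : H^0(L) \otimes H^0(L) \to H^1(T_C)^{\vee} = H^0(\omega_C^2).
\]
The core of the proof is a \v{C}ech-cocycle calculation showing that $\mu^\vee$ coincides with the map $\Phi_L$ introduced in Subsection \ref{gaussianmap}. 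Concretely: choose a cover $\{U_i\}$ trivializing $L$, represent $\xi$ by $\{\xi_{ij}\} \subset T_C(U_{ij})$, and write $s, t \in H^0(L)$ locally as $s_i, t_i$. One computes that the pairing $\langle \mu_\xi(s), t \rangle$ via Serre duality equals the cup product of $\xi$ with the \v{C}ech class of the local expressions $s\,dt - t\,ds$, which is an element of $H^0(\omega_C \otimes L^2) = H^0(\omega_C^2)$. In particular $\mu^\vee$ is alternating in $(s,t)$, so it factors through $\wedge^2 H^0(L)$, and there it coincides with $\Psi_L$.

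\textbf{Conclusion and main obstacle.} Under the Serre-duality pairing $H^1(T_C) \otimes H^0(\omega_C^2) \to \mathbb{C}$, the vanishing $\mu_\xi = 0$ is equivalent to $\xi$ annihilating $\mathrm{im}\,\Psi_L \subset H^0(\omega_C^2)$. Therefore
\[
T_{(C,L)}\mathcal{S}^r_g \;\cong\; (\mathrm{im}\,\Psi_L)^{\bot} \;\subset\; H^0(\omega_C^2)^{\vee} = T_{(C,L)}\mathcal{S}_g,
\]
giving after the standard dualization the statement of the theorem. If moreover $\Psi_L$ is injective, then $\dim \mathrm{im}\,\Psi_L = \binom{r+1}{2}$ and the dimension formula $\dim T_{(C,L)}\mathcal{S}^r_g = 3g - 3 - \binom{r+1}{2}$ follows. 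The principal technical obstacle is the \v{C}ech-cocycle identification of $\mu^\vee$ with $\Phi_L$ together with the alternation property: one must track the obstruction to lifting sections of $\mathcal{L}$ through the Atiyah class / principal-parts formalism for the deformed line bundle and match it against the explicit Wahl/Gaussian expression $s\,dt - t\,ds$, with the theta characteristic condition $L^2 = \omega_C$ playing the decisive role in producing the correct antisymmetric sign.
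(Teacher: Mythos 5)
First, a point of comparison: the paper does not prove this statement at all --- it is quoted verbatim from Nagaraj (\cite{NAG}, Theorem 1) and used as a black box through Corollary \ref{cornag}, so there is no internal proof to measure your argument against. Your outline is the standard deformation-theoretic route, essentially Nagaraj's own: identify $T_{(C,L)}\mathcal{S}_g$ with $H^1(T_C)=H^0(\omega_C^2)^{\vee}$ via the \'etale map $\pi$, lift $\xi$ canonically to a first-order deformation of the pair, and read the membership condition off the coboundary $\mu_\xi\colon H^0(L)\to H^1(L)$.

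As a proof, however, the proposal has genuine gaps. The decisive step --- that under the Serre self-duality $H^1(L)\cong H^0(L)^{\vee}$ induced by $L^2\cong\omega_C$ the pairing $\langle\mu_\xi(s),t\rangle$ equals (up to a universal scalar) $\langle\xi,\,s\,dt-t\,ds\rangle$, and in particular is alternating --- is exactly what you label ``one computes'': it is asserted, not carried out, and it is the entire content of the theorem, since the antisymmetry is where the spin condition enters (through the normalization of the lift of vector fields to first-order operators on $L$ compatible with $\mathcal{L}^2\cong\omega_{\mathcal{C}/\mathbb{C}[\epsilon]}$); without it one only has the map $\Phi_L$ on $H^0(L)\otimes H^0(L)$. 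Second, the claim that $\xi\in T_{(C,L)}\mathcal{S}^r_g$ if and only if ``$h^0(\mathcal{L})\geq r+1$'' does not follow from cohomology and base change: for the locus $\mathcal{S}^r_g$ the inclusion $T_{(C,L)}\mathcal{S}^r_g\subseteq\ker\mu$ needs a semicontinuity-plus-parity argument over $\mathbb{C}[\epsilon]$, while the reverse inclusion $\left\{\xi:\mu_\xi=0\right\}\subseteq T_{(C,L)}\mathcal{S}^r_g$ is not formal at all and in Nagaraj's argument rests on the local skew-symmetric (pfaffian) determinantal description of $\mathcal{S}^r_g$ going back to Mumford and Harris; for a reduced closed locus the kernel condition a priori only bounds the tangent space from above. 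Finally, you reduce to ``the generic case $h^0(L)=r+1$'', but the theorem is stated, and is applied in Corollary \ref{cornag}, at arbitrary points of $\mathcal{S}^r_g$, where $h^0(L)=r+1+2k$ is allowed and $h^0(L)=r+1$ is a conclusion rather than a hypothesis; your sketch does not treat that case, and it is precisely the case needed to conclude $h^0(L)=r+1$ from injectivity of $\Psi_L$ together with Harris's dimension bound.
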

Combining this result with Proposition \ref{corharris}, one immediately obtains the following
\begin{cor}
\label{cornag}
Let $V \subset \mathcal{S}^r_g$ be an irreducible component and let $(C,L) \in V$ such that $\Psi_L$ is injective. Then
$h^0(L)=r+1$ and $V$ has expected codimension ${r+1 \choose 2}$ in $\mathcal{S}_g$.
\end{cor}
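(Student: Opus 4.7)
The approach is to reduce, via Proposition~\ref{h1i2c}, to a cohomological vanishing statement: embedding $M \hookrightarrow \mathbb{P}^r$ by $|L|$ realizes $M$ as a smooth nondegenerate curve of degree $d=g+r$ and genus $g$, and $\Psi_L$ is injective if and only if (granted 2-normality of $M$, which should hold in the indicated range)
$$H^1(\mathbb{P}^r, \mathcal{I}_M^2(2)) = 0.$$
Thus the problem becomes purely cohomological.

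The plan is then a specialization/semicontinuity argument paralleling the one for the half-canonical case treated in Section~\ref{technicallemmas}. One exhibits a flat family $\mathcal{M} \to B$ of subschemes of $\mathbb{P}^r$ whose general fibre $\mathcal{M}_b$ is smooth of degree $g+r$ and genus $g$ and embedded by a nonspecial very ample line bundle, with dominant moduli map $B \dashrightarrow \mathcal{M}_g$, and whose special fibre $X := \mathcal{M}_0$ is a reducible nodal curve $X = C \cup D$ of the type considered in case (b) of the introduction: here $C$ and $D$ are 2-normal irreducible curves meeting transversally along a finite scheme $\Delta$, with both $\mathcal{O}_C(1)$ and $\mathcal{O}_D(1)$ nonspecial, and with $\deg C+\deg D = g+r$ and $g_C+g_D+|\Delta|-1 = g$. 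Since $h^1$ is upper-semicontinuous in flat families, it suffices to verify $H^1(\mathcal{I}_X^2(2)) = 0$.

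For this last vanishing, the exact sequence
$$0 \to \mathcal{I}_X^2(2) \to \mathcal{I}_X(2) \to N_X^\vee(2) \to 0$$
reduces the question to the surjectivity of $H^0(\mathcal{I}_X(2)) \to H^0(N_X^\vee(2))$ together with the injectivity of $H^1(\mathcal{I}_X(2)) \to H^1(N_X^\vee(2))$. Both are then controlled by Mayer--Vietoris sequences
$$0 \to \mathcal{I}_X \to \mathcal{I}_C \oplus \mathcal{I}_D \to \mathcal{I}_\Delta \to 0$$
(and their analogues for $N_X^\vee$), which break everything into cohomological bounds on $\mathcal{I}_C(2)$, $\mathcal{I}_D(2)$, $N_C^\vee(2)$, $N_D^\vee(2)$ and $\mathcal{I}_\Delta(2)$ -- exactly the bounds worked out in Section~\ref{technicallemmas}, now applied with different numerical parameters (this is the ``slightly different context'' alluded to in the introduction). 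The hypothesis $\frac{r^2-2r}{4} \leq g \leq \frac{r^2-r-4}{2}$ is expected to carve out precisely the feasibility range for such $(C,D,\Delta)$: the upper bound guaranteeing that $X$ carries enough quadrics for the $H^0$-surjection, the lower bound guaranteeing the existence of 2-normal components of the required degree and genus meeting at the required number of points.

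The chief obstacle will not be cohomological but global: one must show that the constructed $X$ lies on a component of $\mathrm{Hilb}^r_{g,g+r}$ whose moduli map to $\mathcal{M}_g$ is dominant, so that the semicontinuity conclusion really applies to \emph{a general curve} rather than to a curve in some special subfamily. This is a Hilbert-scheme/Brill--Noether issue, which can be attacked either by a tangent-space parameter count at $[X]$ (smoothability of $X$ with the expected number of moduli) or by invoking known irreducibility results for Hilbert schemes of nonspecial curves in the relevant degree/genus range. Once this global step is done, the cohomological heart of the argument is already contained in the computations of Section~\ref{technicallemmas}.
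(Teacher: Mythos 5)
Your proposal does not address the statement of Corollary \ref{cornag}: what you outline is a degeneration/semicontinuity strategy for proving that the Gaussian map $\Psi_L$ \emph{is} injective for a nonspecial very ample line bundle of degree $g+r$ on a curve with general moduli in a suitable genus range --- that is, the content of the proposition proved in Section \ref{last} --- whereas in the corollary the injectivity of $\Psi_L$ is a \emph{hypothesis}, and the conclusions to be established are that $h^0(L)=r+1$ and that the component $V\subset\mathcal{S}^r_g$ has codimension $\binom{r+1}{2}$ in $\mathcal{S}_g$. Nothing in your argument (the reducible curve $X=C\cup D$ realizing case (b), the Mayer--Vietoris sequences, the Hilbert-scheme dominance issue) bears on the dimension of $V$ or on $h^0(L)$; moreover the setting does not even match, since here $L$ is a theta-characteristic (hence special, of degree $g-1$) and $C$ is not assumed to have general moduli.

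The intended argument is a short tangent-space comparison. By Nagaraj's theorem, quoted immediately before the corollary, $T_{(C,L)}\mathcal{S}^r_g\cong\bigl((\operatorname{im}\Psi_L)^{\perp}\bigr)^{\vee}$, so injectivity of $\Psi_L$ gives $\dim T_{(C,L)}\mathcal{S}^r_g=3g-3-\binom{h^0(L)}{2}$; hence $\dim V\leq \dim T_{(C,L)}\mathcal{S}^r_g\leq 3g-3-\binom{r+1}{2}$, with strict inequality in the last step whenever $h^0(L)>r+1$. On the other hand, Harris's bound (Proposition \ref{corharris}) gives $\dim V\geq 3g-3-\binom{r+1}{2}$. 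Comparing the two forces $h^0(L)=r+1$ and $\dim V=3g-3-\binom{r+1}{2}$, i.e. the expected codimension. The cohomological criterion of Proposition \ref{h1i2c} that you take as your starting point is what the paper uses later to \emph{verify} injectivity of $\Psi_L$ in concrete situations (Fact \ref{ineq} and Sections \ref{technicallemmas}--\ref{last}); it plays no role in deducing the corollary once that injectivity is assumed.
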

Let $C \subset \mathbb{P}^r$ be a (nondegenerate) smooth curve of genus $g$. We say that $C$ is \emph{half-canonical} if $\mathcal{O}_C(1)$ is a theta-characteristic on $C$. Proposition \ref{h1i2c} then gives
\begin{fact}
\label{ineq}
Let $C \subset \mathbb{P}^r$ be a smooth linearly normal (nondegenerate) half-canonical curve of genus $g$. Assume that $H^1(\mathcal{I}^2_C(2))=(0)$. Then the pair $\left(C,\mathcal{O}_C(1)\right)$ is parameterized by an irreducible component of $\mathcal{S}^r_g$ having the expected codimension in $\mathcal{S}_g$.
\end{fact}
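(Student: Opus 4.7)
The plan is to chain together the two main ingredients that have just been set up: Proposition \ref{h1i2c} and Corollary \ref{cornag}. First I would check that $(C,\mathcal{O}_C(1))$ actually sits inside $\mathcal{S}^r_g$. The half-canonical assumption $\mathcal{O}_C(1)^{\otimes 2}\cong\omega_C$ puts $(C,\mathcal{O}_C(1))$ into $\mathcal{S}_g$, while linear normality of $C\subset\mathbb{P}^r$ forces $h^0(\mathcal{O}_C(1))=r+1$, so both the lower bound and the parity condition defining $\mathcal{S}^r_g$ are trivially satisfied.

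Next, I would apply Proposition \ref{h1i2c} with $L=\mathcal{O}_C(1)$: the hypothesis $H^1(\mathcal{I}^2_C(2))=(0)$ immediately yields injectivity of the Gaussian map $\Psi_{\mathcal{O}_C(1)}$. Finally, let $V\subset\mathcal{S}^r_g$ be any irreducible component passing through $(C,\mathcal{O}_C(1))$ (one exists by the previous step); Corollary \ref{cornag} applies to this component and gives $\mathrm{codim}_{\mathcal{S}_g}V=\binom{r+1}{2}$, which is precisely the expected codimension. Combining the three observations finishes the proof.

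There is no genuine obstacle here: this is a bookkeeping statement that packages the general theory so it can be invoked later on concrete half-canonical models. All the substance is hidden in the two results being cited, namely the translation of injectivity of $\Psi_L$ into a vanishing on $\mathcal{I}^2_X(2)$ (using that hyperquadrics have linear singular locus, so $H^0(\mathcal{I}^2_X(2))=0$ and (\ref{eq1}) degenerates), and Nagaraj's identification of $T_{(C,L)}\mathcal{S}^r_g$ with $(\mathrm{im}\,\Psi_L)^{\perp,\vee}$ inside $T_{(C,L)}\mathcal{S}_g$.
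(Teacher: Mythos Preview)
Your argument is correct and matches the paper's approach exactly: the paper does not even spell out a proof, simply introducing the Fact with the phrase ``Proposition \ref{h1i2c} then gives'', so the intended argument is precisely the chain $H^1(\mathcal{I}^2_C(2))=0 \Rightarrow \Psi_{\mathcal{O}_C(1)}$ injective (Proposition \ref{h1i2c}) $\Rightarrow$ expected codimension (Corollary \ref{cornag}). Your explicit verification that $(C,\mathcal{O}_C(1))\in\mathcal{S}^r_g$ via linear normality is a welcome bit of bookkeeping that the paper leaves implicit.
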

which will be our fundamental criterion to check expected codimension in the sequel.\\
Another important fact concerns Hilbert schemes:
\begin{prop}
\label{hilbertscheme}
Let $C \subset \mathbb{P}^r$ be a smooth (nondegenerate) linearly and 2-normal half-canonical curve of genus $g$ such that the Gaussian map $\Psi_{\mathcal{O}_C(1)}$
is injective.
Then $[C]$ is a smooth point of $\emph{Hilb}^r_{g,g-1}$. In particular, $\dim_{[C]}\emph{Hilb}^r_{g,g-1}=h^0(N_C)=3g-4+{r+2 \choose 2}$. Moreover, every smooth curve parameterized by the irreducible component $W \subset \emph{Hilb}^r_{g,g-1}$ containing $[C]$ is half-canonical.
\end{prop}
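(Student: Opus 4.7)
The strategy is to pin down the tangent space dimension $h^0(N_C)$ by a cohomological calculation, then observe that the half-canonical locus $H\subset\emph{Hilb}^r_{g,g-1}$ already sweeps out a $h^0(N_C)$-dimensional piece of the Hilbert scheme through $[C]$. A chain of inequalities will then collapse and yield smoothness, the dimension formula, and the density assertion at once.

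For the cohomological computation, I would use the twisted conormal sequence (\ref{eq1}),
$$0\to \mathcal{I}_C^2(2)\to \mathcal{I}_C(2)\to N_C^\vee(2)\to 0.$$
Since $C$ is nondegenerate, $H^0(\mathcal{I}_C^2(2))=0$ (a nonzero quadric singular along $C$ would be singular along $\mathrm{span}(C)=\mathbb{P}^r$, a contradiction). Proposition \ref{h1i2c}, together with $2$-normality and the injectivity of $\Psi_{\mathcal{O}_C(1)}$, gives $H^1(\mathcal{I}_C^2(2))=0$, while $2$-normality alone gives $H^1(\mathcal{I}_C(2))=0$. The long cohomology sequence therefore collapses to
$$h^0(N_C^\vee(2))=h^0(\mathcal{I}_C(2))=\binom{r+2}{2}-h^0(\omega_C)=\binom{r+2}{2}-g,$$
where in the middle equality I used $\mathcal{O}_C(2)\cong\omega_C$ and $2$-normality. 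Since $N_C^\vee(2)=N_C^\vee\otimes\omega_C$, Serre duality on $C$ converts this into $h^1(N_C)=\binom{r+2}{2}-g$. A Riemann--Roch computation from the normal bundle sequence of $C$ in $\mathbb{P}^r$ yields $\chi(N_C)=4(g-1)$, hence
$$h^0(N_C)=\chi(N_C)+h^1(N_C)=3g-4+\binom{r+2}{2}.$$

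For the geometric step, the hypothesis $H^1(\mathcal{I}_C^2(2))=0$ verified above places us in the setting of Fact \ref{ineq}: there exists an irreducible component $V\subset\mathcal{S}^r_g$ through $(C,\mathcal{O}_C(1))$ of dimension $3g-3-\binom{r+1}{2}$. The natural forgetful morphism $H\to\mathcal{S}^r_g$, $[C']\mapsto(C',\mathcal{O}_{C'}(1))$, is dominant onto an open subset of $V$ with generic fibre of dimension $\dim PGL(r+1)=(r+1)^2-1$, hence
$$\dim_{[C]} H\;\geq\;3g-3-\binom{r+1}{2}+(r+1)^2-1\;=\;3g-4+\binom{r+2}{2}.$$
Recalling that $h^0(N_C)$ is the Zariski tangent space of the Hilbert scheme at $[C]$, the chain
$$3g-4+\binom{r+2}{2}\;\leq\;\dim_{[C]} H\;\leq\;\dim_{[C]}\emph{Hilb}^r_{g,g-1}\;\leq\; h^0(N_C)\;=\;3g-4+\binom{r+2}{2}$$
forces equality throughout. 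Smoothness of $[C]$ (the local ring being regular) then implies that the Hilbert scheme admits a unique irreducible component $W$ through $[C]$, of the claimed dimension; moreover an irreducible component of $H$ through $[C]$ attains dimension $\dim W$ and, being irreducible and contained in $W$, must coincide with $W$. This proves that the general point of $W$ parameterizes a half-canonical curve.

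The heart of the argument is really the determination of $h^0(N_C)$: once that number is pinned down via the cohomology computation, its numerical coincidence with the lower bound coming from Fact \ref{ineq} is what unlocks smoothness and density of the half-canonical locus simultaneously.
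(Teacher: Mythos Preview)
Your proof is correct and follows essentially the same strategy as the paper: compute $h^0(N_C)$ via Serre duality and Proposition \ref{h1i2c}, lower-bound the dimension at $[C]$ using Harris' codimension estimate together with a $PGL(r+1)$ fibre, and squeeze. The only organizational difference is in the density claim: the paper argues directly by semicontinuity that every nearby deformation $C'$ satisfies $h^1(\mathcal{O}_{C'}(2))=1$ (hence $\mathcal{O}_{C'}(2)\cong\omega_{C'}$), whereas you deduce that the half-canonical locus fills $W$ from the equality $\dim_{[C]}H=\dim W$.
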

\begin{proof}
Let $\phi:\text{Hilb}^r_{g,g-1} \dashrightarrow \mathcal{M}_g$ be the moduli map.
Let
$$\xymatrix{\mathcal{O}_C(1) \ar[d] \ar[r] & \mathscr{L} \ar[d] \\ C \ar[d] \ar@{^{(}->}[r] & \mathscr{C} \ar[d] \\ [C] \ar@{^{(}->}[r] & B}$$
be the versal deformation of the spin curve $(C,\mathcal{O}_C(1))$. In particular, $\mathscr{L} \in \text{Pic}(\mathscr{C})$ and $\mathscr{L}^2 \cong \omega_{\mathscr{C}/B}$. Let $B_r := \left\{b \in B | h^0(\mathscr{C}_b,\mathscr{L}_b) \geq r+1, \, h^0(\mathscr{C}_b,\mathscr{L}_b) \equiv r+1 (\text{mod } 2)\right\} \subset B$. Since $\mathcal{O}_C(1)$ is very ample and $h^0(\mathcal{O}_C(1))=r+1$, there exists a nonempty open subset $U$ of an irreducible component of $B_r$ containing the point $[C]$ such that $\mathscr{L}_b$ is very ample and $h^0(\mathscr{C}_b,\mathscr{L}_b) = r+1$ for every $b \in U$. This implies that there exists an irreducible component $W \subset \text{Hilb}^r_{g,g-1}$ containing $[\mathscr{C}_b]$, $\mathscr{C}_b \hookrightarrow_{\varphi_{|\mathscr{L}_b|}} \mathbb{P}^r$, for every $b \in U$. By Theorem \ref{hm} $(ii)$, it follows that $\dim \phi(W) \geq 3g-3-{r+1 \choose 2}$.\\
By assumptions and Proposition \ref{h1i2c} one has
$$h^0(N_C)=\chi(N_C)+h^1(N_C)=\chi(N_C)+h^0(N^{\vee}_C(2))=\chi(N_C)+h^{0}(\mathcal{I}_C(2))=$$
$$=\chi(N_C)+h^{0}(\mathcal{O}_{\mathbb{P}^r}(2))-h^0(\mathcal{O}_C(2))=3g-4+{r+2 \choose 2}=3g-3-{r+1 \choose 2}+(r+1)^2-1.$$
Let $C^{\prime}$ be any deformation of $C$ in $\mathbb{P}^r$. Since $\chi(\mathcal{I}_{C^{\prime}}(2))=\chi(\mathcal{I}_C(2))$ and $h^1(\mathcal{I}_C(2))=0$, sequence (\ref{ix2opr2ox2}) for $C$ and $C^{\prime}$ and the upper semicontinuity of the cohomology give $1=h^2(\mathcal{I}_{C}(2))=h^2(\mathcal{I}_{C^{\prime}}(2))=h^{1}(\mathcal{O}_{C^{\prime}}(2))$. Then, using again the upper semicontinuity of the cohomology, one has, for any smooth curve $C^{\prime \prime}$ parameterized by $W$, $h^1(\mathcal{O}_{C^{\prime \prime}}(2)) \geq 1$, and, since $\deg \mathcal{O}_{C^{\prime \prime}}(2)=2g-2$, one must have equality. This immediately implies $\mathcal{O}_{C^{\prime \prime}}(2) \cong \omega_{C^{\prime \prime}}$ i.e. the fact that $C^{\prime \prime}$ is half-canonical. Since the number of theta characteristics on a fixed curve is finite, the linear series embedding $C$ is isolated in the Brill-Noether variety $\mathcal{W}^r_{g-1}(C)$, hence the fibre of $\phi$ at $[C]$
has dimension $H^{0}(T_{\mathbb{P}^r})$. One then has that
$\dim_{[C]} \text{Hilb}^r_{g,g-1} = \dim \phi(W)+h^{0}(T_{\mathbb{P}^r}) \geq 3g-3-{r+1 \choose 2}+(r+1)^2-1$. Since $H^{0}(N_C) \cong T_{[C]} \text{Hilb}^r_{g,g-1}$, it follows that $[C]$ is a smooth point of $\text{Hilb}^r_{g,g-1}$.
\end{proof}
In the end, we mention the fact that will give the base case for the proof of the main theorem.
\begin{prop}
\label{r=2}
The locus $\mathcal{S}^2_{6}$ has an irreducible component $V$ of codimension $3$ in $\mathcal{S}_6$ such that, for a general point $(C,L) \in V$, the line bundle $L$ is very ample and the Gaussian map $\Psi_L$ is injective.
\end{prop}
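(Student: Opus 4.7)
The plan is to apply Fact \ref{ineq} to an explicit, concrete smooth half-canonical curve in $\mathbb{P}^2$; the natural candidate is a smooth plane quintic $C\subset\mathbb{P}^2$. First I would record that such curves exist (the generic quintic is smooth), that the genus formula for smooth plane curves gives $g=\binom{d-1}{2}=6$ when $d=5$, and that the adjunction formula gives $\omega_C\cong\mathcal{O}_{\mathbb{P}^2}(d-3)|_C=\mathcal{O}_C(2)$, so $\mathcal{O}_C(1)$ is a theta-characteristic. Linear normality is immediate from the sequence
\[
0\to \mathcal{O}_{\mathbb{P}^2}(-4)\to \mathcal{O}_{\mathbb{P}^2}(1)\to \mathcal{O}_C(1)\to 0,
\]
which also yields $h^0(\mathcal{O}_C(1))=3$, so that $(C,\mathcal{O}_C(1))\in\mathcal{S}^2_6$ with $r=2$; nondegeneracy is automatic and $\mathcal{O}_C(1)$ is very ample because it realises the embedding.

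Next I would verify the key cohomological hypothesis $H^1(\mathcal{I}_C^2(2))=0$ of Fact \ref{ineq}. Because $C$ is a Cartier divisor of degree $5$ in $\mathbb{P}^2$, both $\mathcal{I}_C$ and $\mathcal{I}_C^2$ are line bundles, namely $\mathcal{O}_{\mathbb{P}^2}(-5)$ and $\mathcal{O}_{\mathbb{P}^2}(-10)$. Thus
\[
H^1(\mathcal{I}_C^2(2))=H^1(\mathcal{O}_{\mathbb{P}^2}(-8))=0,
\]
and Fact \ref{ineq} produces an irreducible component $V\subset\mathcal{S}^2_6$ containing $(C,\mathcal{O}_C(1))$ of the expected codimension $\binom{3}{2}=3$. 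The analogous vanishing $H^1(\mathcal{I}_C(2))=H^1(\mathcal{O}_{\mathbb{P}^2}(-3))=0$ shows that $C$ is $2$-normal, so Proposition \ref{h1i2c} yields the injectivity of $\Psi_{\mathcal{O}_C(1)}$. Since very ampleness of $L$ and injectivity of $\Psi_L$ are open conditions on $\mathcal{S}^2_6$, they both hold on a nonempty open subset of $V$, hence at the general point.

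There is essentially no obstacle in this base case: because the ideal sheaf of a smooth hypersurface is invertible, every required vanishing reduces to the trivial statement $H^1(\mathcal{O}_{\mathbb{P}^2}(-k))=0$ on $\mathbb{P}^2$, and the whole argument is a short concrete check. The only point worth emphasising is that the component $V$ is chosen to be the one produced by Fact \ref{ineq} from the specific point $(C,\mathcal{O}_C(1))$, so that the openness arguments for the generic very ampleness of $L$ and injectivity of $\Psi_L$ can be applied on $V$ itself.
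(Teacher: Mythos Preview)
Your proposal is correct and follows essentially the same approach as the paper: both take a smooth plane quintic $C\subset\mathbb{P}^2$ with $L=\mathcal{O}_C(1)$, use adjunction to see that $L$ is a theta-characteristic, and then verify the relevant cohomological vanishing. The only cosmetic difference is that the paper argues via $h^1(N_C)=0$ together with Serre duality and equation~(\ref{h0nveex2}) to get $H^0(N_C^\vee(2))=0$ (hence $\Psi_L$ injective) and then invokes Corollary~\ref{cornag}, whereas you compute $\mathcal{I}_C^2(2)\cong\mathcal{O}_{\mathbb{P}^2}(-8)$ directly and appeal to Fact~\ref{ineq} and Proposition~\ref{h1i2c}; these are two equivalent ways of reading off the same vanishing for a hypersurface.
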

\begin{proof}
Let $C$ be a smooth plane quintic and $L=\mathcal{O}_C(1)$. The adjunction formula gives that $L$ is a theta-characteristic and $h^{1}(N_C)=0$. By Serre duality and (\ref{h0nveex2}), the Gaussian map $\Psi_L$ is injective, and, by Corollary \ref{cornag}, $V$ has expected codimension in $\mathcal{S}^2_6$.
\end{proof}
\end{subsection}
\begin{subsection}{Deformations of embedded curves}
\label{dec}
In order to proceed in the described direction, we have to briefly recall some basic facts about deformations of embedded curves (see \cite[Section 1]{Sernesi} for an extensive discussion of all the topic), and then state two general lemmas.\\ \\
Let $X \subset \mathbb{P}^r$, $r \geq 2$, be a connected reduced curve with at most nodes as singularities. There is a 4-terms exact sequence
$$0 \rightarrow T_X \rightarrow {T_{\mathbb{P}^r}}_{|X} \rightarrow N_X \rightarrow T^1_X \rightarrow 0$$
where $T_X := \textbf{Hom}(\Omega^1_X,\mathcal{O}_X)$ and $T^1_X$ is the \emph{first cotangent sheaf} of $X$ (see \cite[Section 1.1.3]{SE}), which is a torsion sheaf supported on the singular locus $\text{Sing}(X)$ and having stalk $\mathbb{C}$ at each of the points (if $X$ is smooth, it is obviously the zero sheaf).
For every subset $I \subseteq \text{Sing}(X)$, we will denote by ${T^1_{X}}_{|I}$ the restriction to $I$ of $T^1_X$ extended by zero on $X$, and by $N^{I}_X := \ker \left\{N_X \rightarrow {T^1_X}_{|I} \right\}$.\\
The vector space $H^0(N^{I}_X)$ is isomorphic to the tangent space at $[X]$ to the locally closed subscheme of $\text{Hilb}^r_{p_a(X),\deg(X)}$ parameterizing deformations of $X$ which are locally trivial at the points of $I$ i.e preserve the nodes corresponding to the points of $I$ (if $I=\text{Sing}(X)$ these are called \emph{locally trivial deformations of} $X$). In the case $I=\emptyset$, one recovers the well-known fact that $H^0(N_X) \cong T_{[X]}\text{Hilb}^r_{p_a(X),\deg(X)}$.\\
The sheaf $N'_X := N^{\text{Sing}(X)}_X = \ker \left\{N_X \rightarrow T^1_X \right\}$ is called the \emph{equisingular normal sheaf} of $X$.\\
From the deformation-theoretic interpretation of the cohomology spaces associated to $N_X$ and $T^1_X$, it follows that, if the cohomology maps $H^{0}(N_X) \rightarrow H^{0}({T^1_X}_{|p})$ are surjective for every $p \in \text{Sing}(X)$ and $\text{Hilb}^r_{p_a(X),\deg(X)}$ is smooth at $[X]$ (a condition that is implied, in particular, by the vanishing of $h^{1}(N_X))$, then the curve $X$ (flatly) deforms to a smooth curve in $\mathbb{P}^r$ i.e. it is a fibre of a family of curves $\mathcal{X} \rightarrow B$, $\mathcal{X} \subset \mathbb{P}^r \times B$, where $B$ is an irreducible algebraic scheme, such that the general fibre $\mathcal{X}_b$ of $\mathcal{X}$ is smooth. $X$ is said to be \emph{smoothable in $\mathbb{P}^r$} (in the sequel we will sometimes omit the specification of the ambient space and simply say \emph{smoothable}). Note that, in particular, $X$ is smoothable if $H^{1}(N'_X)=(0)$.\\
\begin{lem}[cf. {\cite[Lemma 5.1]{Sernesi}}]
\label{sequences}
Let $C_1 \subset \mathbb{P}^r$ be a connected reduced curve with at most nodes as singularities and $C_2 \subset \mathbb{P}^r$ be a smooth connected curve such that $X := C_1 \cup C_2$ is nodal and $\Delta := C_1 \cap C_2$ is a smooth $0$-dimensional subscheme of $C_1$ and $C_2$ supported at smooth points of $C_1$.
Then there exist exact sequences:
\begin{equation}
\label{elementarytransformationnci}
0 \rightarrow N_{C_i} \rightarrow {N_{X}}_{|C_i} \rightarrow {T^1_X}_{|\Delta} \rightarrow 0, \quad i=1,2
\end{equation}
\begin{equation}
\label{icnxn}
0 \rightarrow \mathcal{I}_{C_i/X} \otimes N_{X} \rightarrow N_X \rightarrow {N_{X}}_{|C_i} \rightarrow 0, \quad i=1,2
\end{equation}
\begin{equation}
\label{icinxnprimex}
0 \rightarrow \mathcal{I}_{C_i/X} \otimes N_{X} \rightarrow N^{\Delta}_X \rightarrow N_{C_i} \rightarrow 0, \quad i=1,2
\end{equation}
where $\mathcal{I}_{C_1/X} \cong \mathcal{O}_{C_2}(-\Delta)$ and $\mathcal{I}_{C_2/X} \cong \mathcal{O}_{C_1}(-\Delta)$.
\end{lem}
\begin{proof}
By the definition of $N^{\vee}_{C_i/X}$ there exists an exact sequence
\begin{equation}
\label{i2cixicix}
0 \rightarrow \mathcal{I}^2_{C_i/X} \rightarrow \mathcal{I}_{C_i/X} \rightarrow N^{\vee}_{C_i/X} \rightarrow 0
\end{equation}
where $\mathcal{I}^2_{C_1/X} \cong \mathcal{O}_{C_2}(-2\Delta)$ and $\mathcal{I}^2_{C_2/X} \cong \mathcal{O}_{C_1}(-2\Delta)$. One then has $N^{\vee}_{C_i/X} \cong \mathcal{O}_{\Delta}$. By \cite[(D.2) and the proof of Lemma D.1.3 $(ii)$]{SE}, there exists a short exact sequence
\begin{equation}
\label{nveexci}
0 \rightarrow {N^{\vee}_{X}}_{|C_i} \rightarrow N^{\vee}_{C_i} \rightarrow N^{\vee}_{C_i/X} \rightarrow 0.
\end{equation}
Dualizing (\ref{nveexci}) one obtains the short exact sequence
$$0 \rightarrow N_{C_i} \rightarrow {N_{X}}_{|C_i} \rightarrow \textbf{Ext}^1_{\mathcal{O}_{C_i}}\left(\mathcal{O}_{\Delta},\mathcal{O}_{C_i}\right) \rightarrow 0.$$
Since $\textbf{Ext}^1_{\mathcal{O}_{C_i}}\left(\mathcal{O}_{\Delta},\mathcal{O}_{C_i}\right) \cong {T^1_{X}}_{|\Delta}$, the existence of (\ref{elementarytransformationnci}) is proved.\\
Sequence (\ref{icnxn}) is obtained by tensorizing the exact sequence $0 \rightarrow \mathcal{I}_{C_i/X} \rightarrow \mathcal{O}_X \rightarrow \mathcal{O}_{C_i} \rightarrow 0$ by the locally free sheaf $N_X$.\\
From the definition of $N^{\Delta}_X$, there exists an inclusion $\alpha: \mathcal{I}_{C_i/X} \otimes N_{X} \rightarrow N^{\Delta}_X$ fitting into the commutative exact diagram
$$\xymatrix{& 0 \ar[d] & 0 \ar[d] &  & \\ 0 \ar[r] & \mathcal{I}_{C_i/X} \otimes N_{X} \ar[r]^{\alpha} \ar[d]^{\beta} & N^{\Delta}_X \ar[r] \ar[d] & \mathcal{A} \ar[r] \ar[d]^{\gamma} & 0\\0 \ar[r] & \mathcal{I}_{C_i/X} \otimes N_{X} \ar[d] \ar[r] & N_X \ar[d] \ar[r] & {N_{X}}_{|C_i} \ar[d]^{\epsilon} \ar[r] & 0 \\ & 0 \ar[r] & {T^1_{X}}_{|\Delta} \ar[r]^{\eta} & \mathcal{B}}$$
where $\mathcal{A} := \text{coker } \alpha$ and $\mathcal{B} := \text{coker } \gamma$. Since $\beta$ is an isomorphism, $\gamma$ must be injective. By the injectivity of $\eta$ and the commutativity of the diagram, one has that $\text{im}\, \epsilon \cong {T^1_{X}}_{|\Delta}$, hence the right vertical sequence is nothing but sequence (\ref{elementarytransformationnci}), $\mathcal{A} \cong N_{C_i}$, and the existence of sequence (\ref{icinxnprimex}) is proved.
\end{proof}
The following Lemma is a strong version of Lemma 2.5 in \cite{BE}.
\begin{lem}
\label{critical}
Let $r \geq 2$ be an integer, let $X=\mathcal{X}_0 \subset \mathbb{P}^r$ be a nodal curve and let $q: \mathcal{X} \subset \mathbb{P}^r \times B \rightarrow B$ be a (flat) family of locally trivial deformations of $X$ over an irreducible algebraic scheme $B$. Assume that the \emph{critical scheme} $\mathcal{S} \subset \mathcal{X}$ of the map $q$, i.e. the subscheme of nodal points of the fibres of $q$, is irreducible. Moreover, assume that $h^{0}(N'_X) < h^0(N_X)$ and that the Hilbert scheme $\emph{Hilb}^r_{p_a(X),\deg(X)}$ is smooth at the point $[X]$. Then the curve $X$ is smoothable in $\mathbb{P}^r$.
\end{lem}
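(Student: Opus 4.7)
My plan is to argue by a monodromy argument on the unique smooth component of the Hilbert scheme through $[X]$. Since $\text{Hilb}^r_{p_a(X),\deg(X)}$ is reduced at $[X]$ by hypothesis, there is a unique irreducible component $W$ containing $[X]$ which is smooth there of dimension $h^0(N_X)$. From the four-term exact sequence
$$0 \to T_X \to {T_{\mathbb{P}^r}}_{|X} \to N_X \to T^1_X \to 0$$
one extracts the natural linear map
$$\psi:\, T_{[X]}W = H^0(N_X) \longrightarrow \bigoplus_{p\in \text{Sing}(X)} T^1_{X,p},$$
whose kernel is $H^0(N^{\text{Sing}(X)}_X)=H^0(N'_X)$. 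The hypothesis $h^0(N'_X)<h^0(N_X)$ is therefore the statement that $\psi\neq 0$, i.e.\ there is at least one node $p_i$ for which the component $\psi_i:H^0(N_X)\to T^1_{X,p_i}$ is surjective.

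The second step is to promote this to surjectivity of $\psi_j$ for \emph{every} node $p_j$. Since $q:\mathcal X\to B$ is locally trivial, the critical scheme $\mathcal S\to B$ is étale of degree $k:=|\text{Sing}(X)|$; irreducibility of $\mathcal S$ together with that of $B$ says this cover is connected, hence its monodromy representation $\pi_1(B,0)\to \mathrm{Sym}(\text{Sing}(X))$ is transitive on the nodes. The map $\psi$ is equivariant for the natural monodromy actions on source and target: the left-hand side is acted on by parallel transport of $H^0(N_{\mathcal X_b})$ along loops in $B$, while on the right the induced permutation of nodes permutes the $1$-dimensional summands $T^1_{X,p_i}$ via isomorphisms $\sigma_\ast: T^1_{X,p_i}\to T^1_{X,p_j}$. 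Transitivity plus equivariance then propagates nonvanishing: given $\sigma\in \pi_1(B,0)$ with $\sigma\cdot p_i=p_j$ and $v\in H^0(N_X)$ with $\psi_i(v)\neq 0$, one has $\psi_j(\sigma\cdot v)=\sigma_\ast\bigl(\psi_i(v)\bigr)\neq 0$, so every $\psi_j$ is nonzero, and hence surjective since each $T^1_{X,p_j}$ is $1$-dimensional.

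To conclude, in the smooth local model of $W$ at $[X]$ the locus $D_j\subset W$ parameterising deformations preserving the node $p_j$ has tangent space $\ker\psi_j$, which is a proper hyperplane of $T_{[X]}W$; thus each $D_j$ is a proper divisor and their finite union cannot exhaust $W$. A general smooth arc through $[X]$ in $W$ is therefore transverse to every $D_j$, so it simultaneously smooths all nodes of $X$ to first order. Because $W$ is smooth at $[X]$, this first-order smoothing integrates to an actual one-parameter deformation of $X$ in $\mathbb P^r$ whose general fibre is a smooth curve, proving the smoothability of $X$ in $\mathbb P^r$.

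The principal obstacle I anticipate is making the monodromy equivariance of $\psi$ rigorous in the algebraic category, since the description above rests on parallel transport along loops. The cleanest remedy is to observe that the four-term exact sequence defining $\psi$ is natural in the family $\mathcal X\to B$, so pulling everything back along the connected étale cover $\mathcal S\to B$ gives a family of four-term sequences on which the deck transformations act compatibly, producing precisely the equivariance statement used above. All the remaining steps (smoothness of $W$ at $[X]$, identification of the kernel of $\psi$ with $H^0(N'_X)$, transversality of a general arc) are standard consequences of the deformation-theoretic machinery recalled just before the lemma.
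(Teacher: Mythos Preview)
Your argument has two gaps. First, reducedness of the Hilbert scheme at $[X]$ does not give a unique component, nor smoothness of any component there (e.g.\ the node of $\{xy=0\}\subset\mathbb A^2$ is a reduced point on two components), so the final integration step---lifting a transverse tangent direction to an honest arc---is unjustified at $[X]$. Second, and this is the heart of the matter, the monodromy equivariance you need does not follow from the setup. You want surjectivity of $\psi_i$ at the \emph{single} fibre $X$ to propagate to every $\psi_j$ at that same fibre. But topological monodromy connects the points $(0,p_i)$ and $(0,p_j)$ of $\mathcal S$ only through a path lying over a loop in $B$; along that path the fibres of $q$ change, and there is no flat connection on $q_*N_{\mathcal X/B}$ (its rank may even jump) that would let you parallel-transport sections of $H^0(N_X)$ compatibly with $\psi$. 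Your deck-transformation fix does not repair this: a connected \'etale cover need not be Galois, so the deck group may be trivial even when monodromy is transitive, and in any case deck transformations of $\mathcal S$ over $B$ do not act on the fibre over a single base point.

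The paper's argument avoids both problems by abandoning the attempt to prove anything at $X$ itself. After base-changing to $\mathcal S$ (so that there is a tautological section $\sigma$ picking out one node per fibre), one has a map of coherent sheaves $\rho:\widetilde q_*\mathcal N_{\widetilde{\mathcal X}/\mathcal S}\to\sigma^*\mathcal T^1_{\widetilde{\mathcal X}/\mathcal S}$ on $\mathcal S$ whose fibre at $(b,p)$ is $H^0(N_{\mathcal X_b})\to T^1_{\mathcal X_b,p}$. Its cokernel is coherent, so the surjectivity locus is open; it is nonempty (it contains $(0,p_i)$) and $\mathcal S$ is irreducible, hence it is dense. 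Thus for a \emph{general} $b\in B$ every node of $\mathcal X_b$ has surjective $\psi_j$. Reducedness is an open condition, so it passes to $[\mathcal X_b]$; one then moves to a general (hence smooth) point of a component through $[\mathcal X_b]$ and applies the standard criterion there. Since $X$ and $\mathcal X_b$ lie in the same component, $X$ is smoothable. The irreducibility of $\mathcal S$ thus enters via semicontinuity of a coherent cokernel, not via monodromy.
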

\begin{proof}
Up to shrinking $B$, we can assume that $B$ is nonsingular and $\mathcal{S} \rightarrow B$ is an \'etale covering of degree $|\text{Sing}(X)|$. Then consider the base change
$$\xymatrix{\widetilde{\mathcal{X}} \ar[d]^{\widetilde{q}} \ar[r] & \mathcal{X} \ar[d]^{q} \\ \mathcal{S} \ar[r] & B}$$
By construction, there exists a section $\sigma:\mathcal{S} \rightarrow \widetilde{\mathcal{X}}$ of $\widetilde{q}$.\\
Let $\mathcal{N}_{\widetilde{X}/S}$ (respectively, $\mathcal{T}^1_{\widetilde{X}/S}$) be the relative normal bundle (respectively, the first relative cotangent sheaf) of $\widetilde{X}$ over $S$, and consider the map $\widetilde{q}_{*}\mathcal{N}_{\widetilde{X}/S} \xrightarrow{\rho} \sigma^{*}\mathcal{T}^1_{\widetilde{X}/S} \rightarrow \text{coker} \rho \rightarrow 0$.\\
Since $h^{0}(N'_X) < h^{0}(N_X)$, the image of the map $H^{0}(N_X) \rightarrow H^{0}(T^1_X)$ is at least one-dimensional, thus there is at least one $p \in \text{Sing}(X)$ such that the map $H^{0}(N_X) \rightarrow H^{0}({T^1_X}_{|p})$ is surjective. If $s=(q(\mathcal{X}_0),p) \in \mathcal{S}$, that map is exactly the fibre map
$${\widetilde{q}_{*}\mathcal{N}_{\widetilde{X}/S}}_{|s} \cong H^{0}(N_X) \xrightarrow{\rho_s} {\sigma^{*}\mathcal{T}^1_{\widetilde{X}/S}}_{|s} \cong H^{0}({T^1_X}_{|p})$$
where the isomorphisms are given by the fact that the cohomology of both sheaves is constant on fibres and both sheaves are flat over $\mathcal{S}$. Then, since $\text{coker} \rho$ is a coherent sheaf, there exists a nonempty open set $\mathcal{U} \subset \mathcal{S}$ such that ${\text{coker} \rho}_{|\mathcal{U}} \cong (0)$.
Since $\mathcal{S}$ is irreducible, this implies that, for the general fibre $\widetilde{\mathcal{X}}_t$, the map $H^{0}(N_{\widetilde{\mathcal{X}}_t}) \rightarrow H^{0}({T^1_{\widetilde{\mathcal{X}}_t}}_{|w})$ is surjective for every $w \in \text{Sing}(\widetilde{\mathcal{X}}_t)$. Since $\text{Hilb}^r_{p_a(X),\deg(X)}$ is smooth at $[X]$, it is smooth at $[\widetilde{X}_t]$ too, hence $\widetilde{X}_t$ is smoothable.
\end{proof}
\end{subsection}
\begin{subsection}{Elementary transformations of vector bundles}
Let $C \subset \mathbb{P}^r$ be a smooth irreducible curve and let $S := \left\{p_1,...,p_s\right\} \subset C$ be a smooth $0$-dimensional scheme of length $s$. Let $m$ be a positive integer and let $E,F$  be rank $m$ vector bundles on $C$ such that there exists an exact sequence
\begin{equation}
\label{pet}
0 \rightarrow E \xrightarrow{\alpha} F \rightarrow \mathcal{O}_S \rightarrow 0.
\end{equation}
The fibre map $\alpha_x:E_{x} \rightarrow F_x$ is injective for all $x \in C \smallsetminus S$, while it has a one-dimensional kernel $l_x \subset E_x$ for all $x \in S$. The vector bundle $F$ is thus uniquely determined by the scheme $S$ and by the lines $l_x \subset E_x$ for all $x \in S$. Let $v_i=\mathbb{P}(l_{p_i})$, $i=1,...,s$, and define $K := \left\{(p_i,v_i)\right\}_{i=1,...,s} \subset \mathbb{P}(E)$, where $\mathbb{P}(E)$ is the projectivized bundle of $E$.\\
We say that $F$ is the \emph{positive elementary transformation of $E$ defined by $K$}, and we denote it by $\text{Elm}^{+}_K(E)$. Of course, given the projection $\pi:\mathbb{P}(E) \rightarrow C$, one has $S=\pi(K)$.
\begin{prop}
\label{propertieselementary}
Notation as above, let $L \in \emph{Pic}(C)$. Fix an isomorphism $f:\mathbb{P}(E) \rightarrow \mathbb{P}(E \otimes L)$ and let $K^{\prime}$ be the image of $K$ under $f$. Then one has $\emph{Elm}^{+}_{K^{\prime}}(E \otimes L) \cong \emph{Elm}^{+}_K(E) \otimes L$.
\end{prop}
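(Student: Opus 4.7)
The plan is to derive the desired isomorphism by tensoring the defining short exact sequence of $\mathrm{Elm}^{+}_K(E)$ with the line bundle $L$. Since $L$ is locally free of rank $1$, tensoring by $L$ is exact, so from
$$0 \to E \to \mathrm{Elm}^{+}_K(E) \to \mathcal{O}_S \to 0$$
one obtains
$$0 \to E \otimes L \to \mathrm{Elm}^{+}_K(E) \otimes L \to \mathcal{O}_S \otimes L \to 0.$$
My first step will be to identify $\mathcal{O}_S \otimes L$ with $\mathcal{O}_S$: since $S$ is a reduced $0$-dimensional scheme, tensoring $\mathcal{O}_S$ with any line bundle amounts to twisting each skyscraper summand at $p_i$ by the $1$-dimensional vector space $L_{p_i}$, which yields an isomorphic skyscraper. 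This identification exhibits $\mathrm{Elm}^{+}_K(E) \otimes L$ as a positive elementary transformation of $E \otimes L$ along $S$, and therefore as $\mathrm{Elm}^{+}_{K''}(E \otimes L)$ for some scheme $K'' \subset \mathbb{P}(E \otimes L)$ lying above $S$.

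The second step will be to prove $K'' = K'$. By construction, the line $l''_{p_i} \subset (E \otimes L)_{p_i}$ selecting the point of $K''$ above $p_i$ is the kernel of the fibre map $(E \otimes L)_{p_i} \to \bigl(\mathrm{Elm}^{+}_K(E) \otimes L\bigr)_{p_i}$. Since taking fibres commutes with tensoring by a line bundle, this kernel equals $l_{p_i} \otimes L_{p_i} \subset E_{p_i} \otimes L_{p_i}$, where $l_{p_i}$ is the line defining $K$ above $p_i$. The canonical isomorphism $f: \mathbb{P}(E) \to \mathbb{P}(E \otimes L)$ is induced fibrewise by the assignment $l \mapsto l \otimes L_{p_i}$, an operation which is well-defined on projectivizations precisely because $L_{p_i}$ is $1$-dimensional. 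Hence $f(l_{p_i}) = l''_{p_i}$, and thus $K'' = f(K) = K'$.

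The only obstacle I foresee is pure bookkeeping: verifying that the canonical identifications $\mathcal{O}_S \otimes L \cong \mathcal{O}_S$ and $f: \mathbb{P}(E) \cong \mathbb{P}(E \otimes L)$ are compatible with the exact sequence above, and that the local trivializations of $L$ used to realize the former do not affect the resulting fibrewise data on $\mathbb{P}(E \otimes L)$. Once this compatibility is set up, the conclusion follows from a pointwise inspection at each $p_i \in S$ and requires no cohomological input.
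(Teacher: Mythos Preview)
Your proof is correct and follows essentially the same route as the paper's: tensor the defining short exact sequence by $L$, identify $\mathcal{O}_S \otimes L \cong \mathcal{O}_S$, and observe fibrewise that the kernel of the resulting map at each $p_i$ is the image of $l_{p_i}$ under the canonical identification $E_{p_i} \cong (E \otimes L)_{p_i}$. The paper's version is terser---it dispatches the fibre check in a single sentence---but the content is identical.
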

\begin{proof}
Twisting the exact sequence
$$0 \rightarrow E \xrightarrow{\alpha} \text{Elm}^{+}_K(E) \rightarrow \mathcal{O}_S \rightarrow 0$$
by $L$ one obtains
$$0 \rightarrow E \otimes L \xrightarrow{\beta} \text{Elm}^{+}_K(E) \otimes L \rightarrow \mathcal{O}_S \rightarrow 0.$$
The fibre map $\beta_x$ contracts exactly the same lines $l_x$ as $\alpha_x$ in the identification among $E_x$ and $(E \otimes L)_x$. The claim then follows by definition.
\end{proof}
\begin{prop}
\label{generalelementary}
Notation as above, let $K$ be sufficiently general. Then one has $h^1(\emph{Elm}^{+}_K(E))=\max \left\{0,h^1(E)-s\right\}$.
\end{prop}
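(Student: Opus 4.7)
The plan is to proceed by induction on $s$, combined with the upper semicontinuity of $h^1$ on the (irreducible) parameter space $\mathbb{P}(E)^s$ of tuples $K$. The lower bound $h^1(\text{Elm}^+_K(E)) \geq \max\{0,h^1(E)-s\}$ is immediate from the cohomology of (\ref{pet}), since $h^0(\mathcal{O}_S)=s$; thus it suffices to exhibit, for each $s$, a single $K$ achieving equality, since semicontinuity then propagates the equality to an open dense subset of $\mathbb{P}(E)^s$. The case $s=0$ is trivial.

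For the inductive step, write $K = K' \cup \{(p_s, v_s)\}$ with $K'$ satisfying the statement for $s-1$ by the inductive hypothesis, and set $F' := \text{Elm}^+_{K'}(E)$, $F := \text{Elm}^+_K(E)$. Comparing the defining sequences (\ref{pet}) for $F'$ and $F$, the natural inclusion $\mathcal{O}_{S\setminus\{p_s\}} \hookrightarrow \mathcal{O}_S$ lifts via the identity on $E$ to an inclusion $F' \hookrightarrow F$ with cokernel $\mathcal{O}_{p_s}$, and the associated cohomology sequence reads
$$H^0(F) \to \mathbb{C} \xrightarrow{\partial} H^1(F') \to H^1(F) \to 0.$$
If $h^1(F')=0$ then $h^1(F)=0=\max\{0,h^1(E)-s\}$, so the interesting case is $h^1(F')\geq 1$, where one needs to produce $(p_s,v_s)$ with $\partial \neq 0$; this forces $h^1(F)=h^1(F')-1=\max\{0,h^1(E)-s\}$.

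The main obstacle is to identify $\partial$ explicitly and check it is generically nonzero. Since $p_s \notin \pi(K')$, the bundle $F'$ agrees with $E$ near $p_s$, so in particular $F'_{p_s}=E_{p_s}$; a local computation in a frame $(e_1,\ldots,e_m)$ of $E$ near $p_s$ with $e_1(p_s)$ spanning $l_s:=\mathbb{C}v_s$ shows that $F$ is locally generated by $(e_1/t, e_2,\ldots,e_m)$ (where $t$ is a local parameter at $p_s$) and that the extension class $\xi \in \text{Ext}^1(\mathcal{O}_{p_s},F') \cong F'_{p_s}$ is a nonzero vector on $l_s$. Consequently $\partial$ factors as
$$\mathbb{C} \xrightarrow{\,1\mapsto \xi\,} F'_{p_s} \xrightarrow{\delta'} H^1(F'),$$
where $\delta'$ is the coboundary of $0\to F' \to F'(p_s) \to F'_{p_s} \to 0$. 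By Serre duality $\delta'$ is dual to the evaluation $H^0(F'^{\vee}\otimes\omega_C) \to (F'^{\vee}\otimes\omega_C)_{p_s}$, and since $h^0(F'^{\vee}\otimes\omega_C)=h^1(F')\geq 1$, any nonzero global section of $F'^{\vee}\otimes\omega_C$ is non-vanishing on a dense open subset of $C$. Hence $\delta'\neq 0$ for $p_s$ in a dense open subset of $C$, and for such $p_s$ a generic $v_s\in E_{p_s}$ avoids $\ker\delta'$, giving $\partial\neq 0$. This closes the induction.
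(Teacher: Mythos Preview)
Your proof is correct and follows essentially the same approach as the paper's: reduce to the case $s=1$ and iterate, then use Serre duality to convert the question about $h^1$ into one about sections of $F'^{\vee}\otimes\omega_C$, where a generic point and direction impose an independent condition. The paper organizes this slightly differently---it dualizes the sequence $0\to E\to F\to\mathcal{O}_p\to 0$ first and then twists by $\omega_C$, observing directly that the map $H^0(E^\vee\otimes\omega_C)\to H^0(\mathcal{O}_p)$ is surjective for general $K$---whereas you analyze the coboundary $\partial$ explicitly, factor it through the universal coboundary $\delta'$, and only then invoke Serre duality; but the underlying argument is the same, and your version spells out in more detail the step the paper leaves implicit.
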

\begin{proof}
It is sufficient to prove the result for $s=1$ i.e. $S=p$ and then iterate. Dualizing sequence (\ref{pet}) one obtains the short exact sequence
\begin{equation}
\label{petvee}
0 \rightarrow \left(\text{Elm}^{+}_K(E)\right)^{\vee} \rightarrow E^{\vee} \rightarrow \textbf{Ext}^1_{\mathcal{O}_C}(\mathcal{O}_p,\mathcal{O}_C) \rightarrow 0,
\end{equation}
where $\textbf{Ext}^1_{\mathcal{O}_C}(\mathcal{O}_p,\mathcal{O}_C) \cong \mathcal{O}_p$. Twisting sequence (\ref{petvee}) by $\omega_C$ one obtains the exact sequence
\begin{equation}
\label{petveeomega}
0 \rightarrow \left(\text{Elm}^{+}_K(E)\right)^{\vee} \otimes \omega_C \rightarrow E^{\vee} \otimes \omega_C \rightarrow \mathcal{O}_p \rightarrow 0.
\end{equation}
Since $K$ is general in $\mathbb{P}(E)$, one has
$$h^0(\left(\text{Elm}^{+}_K(E)\right)^{\vee} \otimes \omega_C)=\max \left\{h^0(E^{\vee} \otimes \omega_C)-1,0\right\}.$$
The claim then immediately follows by Serre duality.
\end{proof}
Let now $X \subset \mathbb{P}^r$ be a connected reduced nodal curve having $C$ as an irreducible component, and such that $C$ intersects $\overline{X \smallsetminus C}$ in a smooth $0$-dimensional subscheme $\Delta := \left\{p_1,...,p_s\right\}$ of length $s$.
\begin{prop}
\label{isonormalelementary}
Notation as above, let $l_i$, $i=1,...,s$, be the tangent line to $\overline{X \smallsetminus C}$ at the point $p_i$, let $v_i=\mathbb{P}(l_i)$ and let $K := \left\{(p_i,v_i)\right\}_{i=1,...s} \subset \mathbb{P}(N_C)$. Then one has that ${N_{X}}_{|C} \cong \emph{Elm}^{+}_K(N_C)$.
\end{prop}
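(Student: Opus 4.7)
The plan is to recognise ${N_X}|_C$ as a positive elementary transformation of $N_C$ along $\Delta$ via a version of sequence (\ref{nveexci}), and then to identify the data $K$ by a local calculation at each node. As a first step I would observe that the derivation of sequence (\ref{nveexci}) in the proof of Lemma~\ref{sequences} uses only that $X$ is nodal along $\Delta$ and $C$ is smooth at each $p_i$; neither smoothness nor irreducibility of $\overline{X \smallsetminus C}$ enters, so the same argument produces
$$0 \to {N^{\vee}_X}|_C \to N^{\vee}_C \to N^{\vee}_{C/X} \to 0$$
with $N^{\vee}_{C/X} \cong \mathcal{O}_\Delta$. Dualising gives
$$0 \to N_C \to {N_X}|_C \to \mathcal{O}_\Delta \to 0,$$
which is exactly the defining exact sequence (\ref{pet}) of a positive elementary transformation. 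Thus ${N_X}|_C \cong \text{Elm}^{+}_{K'}(N_C)$ for a uniquely determined $K' = \{(p_i, w_i)\}_{i=1,\ldots,s} \subset \mathbb{P}(N_C)$, and the whole proof reduces to the pointwise statement that the line $l_{p_i} \subset (N_C)_{p_i}$ annihilated by the fibre map is spanned by $v_i$ for every $i$.

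I would then verify this claim node by node. Fix $p = p_i$; since $X$ is nodal at $p$ and one branch lies on $C$, the second branch belongs to $\overline{X \smallsetminus C}$, is smooth at $p$, and has tangent direction $v_i$ transverse to $T_{C,p}$. I would choose local analytic coordinates $x_1, \ldots, x_r$ on $\mathbb{P}^r$ centred at $p$ so that $C$ is the $x_1$-axis (hence $T_{C,p} = \langle \partial/\partial x_1 \rangle$) and the second branch is the $x_2$-axis (hence $v_i = \partial/\partial x_2$). Then locally
$$\mathcal{I}_C = (x_2, x_3, \ldots, x_r), \qquad \mathcal{I}_X = (x_1 x_2, x_3, \ldots, x_r),$$
and a direct calculation gives $N^{\vee}_C|_p = \mathcal{I}_C/\mathfrak{m}_p \mathcal{I}_C$ with basis $\bar x_2, \ldots, \bar x_r$, while ${N^{\vee}_X}|_C|_p = \mathcal{I}_X/\mathfrak{m}_p \mathcal{I}_X$ with basis $\overline{x_1 x_2}, \bar x_3, \ldots, \bar x_r$.

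The core of the argument is then the fibre computation, which I expect to be the only slightly delicate point. The map ${N^{\vee}_X}|_C|_p \to N^{\vee}_C|_p$ induced by the inclusion $\mathcal{I}_X \subset \mathcal{I}_C$ sends $\overline{x_1 x_2}$ to $0$ (because $x_1 x_2 = x_1 \cdot x_2 \in \mathfrak{m}_p \mathcal{I}_C$) and fixes $\bar x_3, \ldots, \bar x_r$, so its cokernel is the line $\langle \bar x_2 \rangle$. Dualising, the kernel of $(N_C)_p \to ({N_X}|_C)_p$ is the annihilator of $\langle \bar x_3, \ldots, \bar x_r \rangle$ inside $(N_C)_p \cong T_{\mathbb{P}^r,p}/T_{C,p}$, that is, the line $\langle \partial/\partial x_2 \rangle = \langle v_i \rangle$. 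This yields $K' = K$ and finishes the proof; everything else is formal bookkeeping using Lemma~\ref{sequences} and the definition of a positive elementary transformation.
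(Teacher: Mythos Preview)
Your proof is correct and follows essentially the same route as the paper: the paper's proof is a one-liner that invokes sequence (\ref{elementarytransformationnci}) from Lemma~\ref{sequences} (which already exhibits ${N_X}|_C$ as a positive elementary transformation of $N_C$ along $\Delta$) and then simply asserts that the kernel of the fibre map $(N_C)_{p_i} \to ({N_X}|_C)_{p_i}$ is the tangent line to $\overline{X \smallsetminus C}$ at $p_i$. You have re-derived that sequence and supplied the explicit local-coordinate verification of the kernel identification that the paper leaves to the reader; this is additional detail rather than a different argument.
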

\begin{proof}
For all $p_i \in \Delta$, the kernel of the fibre map $\alpha_{p_i}:\left({N_{C}}\right)_{p_i} \rightarrow \left({{N_X}_{|C}}\right)_{p_i}$ (see (\ref{elementarytransformationnci})) is exactly the tangent line to $\overline{X \smallsetminus C}$ at the point $p_i$.
\end{proof}
\begin{cor}
\label{generalpivi}
Notation as in Proposition \ref{isonormalelementary}, let $L \in \emph{Pic}(C)$ and assume that $K := \left\{(p_i,v_i)\right\}_{i=1,...,s} \subset \mathbb{P}(N_C(L))$ is general. Then $h^1({N_{X}}_{|C}(L))=\max\left\{0,h^{1}(N_C(L))-s\right\}$.
\end{cor}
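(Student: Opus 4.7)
The plan is to assemble this statement directly from the three previous propositions in the subsection.

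First, by Proposition \ref{isonormalelementary}, there is an identification ${N_X}_{|C} \cong \mathrm{Elm}^+_K(N_C)$, where $K = \{(p_i, v_i)\}_{i=1,\dots,s}$ is viewed inside $\mathbb{P}(N_C)$. Next I would apply Proposition \ref{propertieselementary}, twisting by the line bundle $L$: fixing the canonical isomorphism $f\colon \mathbb{P}(N_C) \to \mathbb{P}(N_C \otimes L)$ (these projective bundles are isomorphic because tensoring a vector bundle with a line bundle does not affect its projectivization), one obtains
\[
{N_X}_{|C} \otimes L \;\cong\; \mathrm{Elm}^+_K(N_C) \otimes L \;\cong\; \mathrm{Elm}^+_{K'}(N_C \otimes L),
\]
where $K' := f(K)$. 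Since the hypothesis of the corollary is that $K$ is general in $\mathbb{P}(N_C(L))$, and genericity is preserved under the isomorphism $f$, the subscheme $K'$ is likewise sufficiently general inside $\mathbb{P}(N_C \otimes L)$.

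The conclusion is then immediate from Proposition \ref{generalelementary} applied to the rank $r-1$ vector bundle $E := N_C \otimes L$:
\[
h^1({N_X}_{|C}(L)) \;=\; h^1\bigl(\mathrm{Elm}^+_{K'}(N_C \otimes L)\bigr) \;=\; \max\bigl\{0,\, h^1(N_C \otimes L) - s\bigr\}.
\]

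There is essentially no obstacle here: the only small point requiring a remark is the compatibility of the two notions of genericity (general point of $K$ in $\mathbb{P}(N_C)$ versus in $\mathbb{P}(N_C(L))$) under the fixed isomorphism $f$, which is routine since $f$ is a biregular map of projective bundles. All the real work has already been done in Propositions \ref{propertieselementary}, \ref{generalelementary} and \ref{isonormalelementary}; the corollary is just a bookkeeping step that packages the elementary transformation with a twist, ready to be used in the cohomological estimates of later sections.
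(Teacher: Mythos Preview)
Your proposal is correct and follows precisely the approach of the paper, whose proof is the single line ``Combine Proposition \ref{propertieselementary}, Proposition \ref{generalelementary} and Proposition \ref{isonormalelementary}.'' You have simply unpacked this combination in detail, including the routine check that genericity of $K$ transfers along the canonical isomorphism $\mathbb{P}(N_C)\cong\mathbb{P}(N_C(L))$.
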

\begin{proof}
Combine Proposition \ref{propertieselementary}, Proposition \ref{generalelementary} and Proposition \ref{isonormalelementary}.
\end{proof}
\end{subsection}
\begin{subsection}{Secant spaces to a projective curve}
Let $C$ be a smooth curve of genus $g$, let $L \in \text{Pic}(C)$ and let $l=\mathbb{P}(V)$, $V \subset H^{0}(C,L)$ be a linear series on $C$, $\dim l=r \geq 2$. Let $0 \leq f < e$ be integers, let $C_{(e)}$ be the $e$-symmetric product of $C$ and consider the locus $V^{e-f}_e(l)$ defined set-theoretically as
$$V^{e-f}_e(l) = \left\{D \in C_{(e)} \, \big| \, \dim l(-D) \geq r-e+f \right\}.$$
This is the locus of effective divisors of degree $e$ on $C$ which impose at most $e-f$ independent conditions to $l$. If $l$ is very ample and $C$ is identified with its image in the embedding $C \hookrightarrow_{\varphi_{l}} \mathbb{P}^r$, then $V^{e-f}_e(l)$ parameterizes $(e-f-1)$-planes in $\mathbb{P}^r$ which are $e$-secant to $C$. We will write $V^{e-f}_{e}(L)$ for $V^{e-f}_e(\mathbb{P}(H^{0}(C,L)))$.\\
There is an equivalent definition, which generalizes to the case in which $C$ is singular. Let $Y$ be an algebraic scheme, let $\mathcal{L} \in \text{Pic}(Y)$, $\mathfrak{l}=\mathbb{P}(\mathcal{V})$, $\mathcal{V} \subset H^{0}(Y,\mathcal{L})$ be a linear series, and consider the commutative diagram
$$\xymatrix{Y_{(e-1)} \times Y \ar[d]_{f} \ar@{^{(}->}[r]^{i} & Y_{(e)} \times Y \ar[ld]_{p} \ar[r]^{q} & Y \\ Y_{(e)}  & & }$$
where the maps are the obvious ones. Define $E_{\mathcal{L}} := f_{*}i^{*}q^{*}\mathcal{L}$. By \cite[(2.5)]{SCHW}, there is a naturally defined morphism of sheaves
$$\sigma_\mathcal{V}:\mathcal{V} \otimes \mathcal{O}_{Y_{(e)}} \rightarrow E_{\mathcal{L}}.$$
We will write $\sigma_{\mathcal{L}}$ for $\sigma_{H^{0}(Y,\mathcal{L})}$.\\
Now, let $C$ be a reduced but possibly singular curve, let $L \in \text{Pic}(C)$, $l=\mathbb{P}(V)$, $V \subset H^{0}(C,L)$, let $C_{e}$ be the $e$-cartesian product of $C$ and consider the obvious projections
$$C_{(e)} \xleftarrow{h} C_{e} \xrightarrow{\pi_i} C.$$
Set $L[e]  := \bigoplus_{i=1}^{e} \pi_i^{*}L$. Clearly, $L[e]$ is a locally free sheaf on $C_e$ of rank $e$. Let $\mathcal{E}_L$ be the sheaf on the $e$-th symmetric product of the smooth locus of $C$, $C^{\textrm{sm}}_{(e)}$, whose sections on an open set $U$ are the $G$-invariant sections of $L[e]$ on $h^{-1}(U)$ where $G$ is the Galois covering-map group of $h$. It can be shown (see \cite[Proposition 1 and 2]{MA}) that over $C^{\textrm{sm}}_{(e)}$ one has $E_L \cong \mathcal{E}_L$, that $\mathcal{E}_
 L$ is a locally free sheaf of rank $e$, and the map $\sigma_V$ restricts over $C^{\textrm{sm}}_{(e)}$ to a morphism of vector bundles. Define
\begin{equation}
\label{secondadefvefe}
V^{e-f}_e(l) = \left\{D \in C^{\textrm{sm}}_{(e)} \, \big| \, \text{rk } {\sigma_{V}}_{|D} \leq e-f \right\}.
\end{equation}
Scheme-theoretically, $V^{e-f}_e(l)$ is given locally at $D$ by (the vanishing of) the $e-f+1$ minors of some matrix representation of $\sigma_{V}$ at $D$ (see \cite[Section 4]{HJ} for details). An immediate consequence of the definition is the following
\begin{prop}
Notation as above, either $V^{e-f}_e(l)$ is empty or for every irreducible component $V \subset V^{e-f}_e(l)$ one has
\begin{equation}
\label{expvefe}
\dim V \geq e-f(r+1-e+f).
\end{equation}
\end{prop}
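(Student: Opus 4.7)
The plan is to read $V^{e-f}_e(l)$ as a degeneracy locus for the bundle map $\sigma_V : V \otimes \mathcal{O}_{C^{sm}_{(e)}} \to \mathcal{E}_L$ and then invoke the standard dimension estimate for such loci. By the discussion preceding the statement, $\mathcal{E}_L$ is locally free of rank $e$ on the smooth symmetric product $C^{sm}_{(e)}$ and $V \otimes \mathcal{O}_{C^{sm}_{(e)}}$ is trivial of rank $r+1$, so $\sigma_V$ is genuinely a morphism of vector bundles over $C^{sm}_{(e)}$. Definition (\ref{secondadefvefe}) then identifies $V^{e-f}_e(l)$ set-theoretically with the rank-$\leq (e-f)$ locus of $\sigma_V$, and the scheme structure is the one cut out locally by the ideal generated by the $(e-f+1)$-minors of a local matrix representation of $\sigma_V$ (cf. \cite{HJ}, Section 4).

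The key input is the classical Eagon--Northcott codimension bound for determinantal loci: if $\phi : F \to G$ is a morphism of vector bundles of ranks $m$ and $n$ on a Cohen--Macaulay scheme $X$, then every non-empty irreducible component of the locus $D_k(\phi) = \{x \in X : \text{rk}\,\phi(x) \leq k\}$ has codimension in $X$ at most $(m-k)(n-k)$. Since $C^{sm}_{(e)}$ is smooth (hence Cohen--Macaulay), we may apply this with $m = r+1$, $n = e$ and $k = e-f$ to obtain
$$\text{codim}_{C^{sm}_{(e)}} V \;\leq\; (r+1-(e-f))(e-(e-f)) \;=\; f(r+1-e+f)$$
for every irreducible component $V \subset V^{e-f}_e(l)$. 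Combining with $\dim C^{sm}_{(e)} = e$ yields the desired bound $\dim V \geq e - f(r+1-e+f)$.

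There is really no serious obstacle here: the statement is purely local on $C^{sm}_{(e)}$ and, once the translation to a determinantal locus is in place, it reduces to the well-known minor-rank codimension estimate. The only point worth checking is that the ambient space $C^{sm}_{(e)}$ has the expected dimension $e$ and is regular, both of which are standard properties of the symmetric product of a smooth curve restricted to the smooth part of $C$.
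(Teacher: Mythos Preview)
Your argument is correct and is exactly the reasoning the paper has in mind: the proposition is stated there as ``an immediate consequence of the definition'' with no proof, precisely because definition (\ref{secondadefvefe}) exhibits $V^{e-f}_e(l)$ as the rank-$(e-f)$ degeneracy locus of a map between bundles of ranks $r+1$ and $e$ on the $e$-dimensional variety $C^{sm}_{(e)}$, so the standard determinantal codimension bound gives (\ref{expvefe}). One very minor remark: the Cohen--Macaulay hypothesis is not actually needed for the upper bound on the codimension of a determinantal locus (it is relevant only for equidimensionality or the lower bound), so your invocation of it is harmless but superfluous.
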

If equality holds in (\ref{expvefe}), we say that $V$ has \emph{expected dimension}.\\
The case we are interested in is the case in which $L$ is very ample, $l$ is complete and $f=1$ (we will write $V^{e-1}_e(L)$ instead of $V^{e-1}_e(l))$. In this situation, the linear span of a divisor $D \in V^{e-1}_e(L)$ is a $(e-2)$-plane, and the right term of (\ref{expvefe}) becomes $2e-r-2$, thus for $e < \frac{r}{2}+1$ we expect the emptyness of $V^{e-1}_e(L)$. However, even if we impose the condition $e \geq \frac{r}{2}+1$ and rule out trivial cases such as the one of the rational normal curve, the non-emptyness of $V^{e-1}_e(L)$ is not granted in general. In the following Proposition we show non-emptyness for curves obtained by smoothing a suitable reducible curve. Given a nondegenerate curve $C \subset \mathbb{P}^r$ and a positive integer $k$, define the \emph{variety of $k$-secants of $C$} $S^k(C)$ as the closure in $\mathbb{P}^{r}$ of the quasiprojective variety
$$\mathring{S}^k(C) := \left\{p \in <x_0,...,x_k> | x_0,...,x_k \in C, \,\dim <x_0,...,x_k>=k\right\}.$$
The variety $S^k(C)$ is irreducible of dimension $\min \left\{2k+1,r\right\}$.\\
In what follows we will write ``l.g.p." for ``linearly general position" and, with a little abuse of notation, we will indicate with the same notation a certain divisor on a curve and its support.
\begin{prop}
\label{r2secant}
Let $r \geq 2$ and let $C \subset H \cong \mathbb{P}^r \subset \mathbb{P}^{r+1}$ be a smooth (nondegenerate) linearly normal curve.
Assume that, for some $\lfloor \frac{r}{2} \rfloor +3 \leq e \leq r+2$, the scheme $V^{e-1}_{e}(\mathcal{O}_C(1))$ is nonempty and has an irreducible component $V_e$ whose general point parameterizes a divisor $S^r_e=S_e \in \emph{Div}(C)$ satisfying the following properties:
$$(\star)_e \quad \left\{\begin{array}{ll}
               (i) & S_e \in V^{e-1}_e(\mathcal{O}_{C}(1)) \smallsetminus V^{e-2}_e(\mathcal{O}_{C}(1)); \\
               (ii) & S_e \hbox{ is smooth}; \\
               (iii) & \hbox{the points of } S_e \hbox{ are in l.g.p. in } <S_e>.
             \end{array}\right.$$
Let $H^{\prime} \cong \mathbb{P}^{e-1} \subset \mathbb{P}^{r+1}$ be a $(e-1)$-plane cutting on $H$ the $(e-2)$-plane $<S_e> := H_{e-2}$.
Let $E \subset H^{\prime}$ be a smooth nondegenerate curve intersecting $C$ transversally in $S_e$, and such that $[E]$ is a general point of an irreducible component of $\emph{Hilb}^{e-1}_{g(E),\deg(E)}$.
Assume that $X := C \cup E \subset \mathbb{P}^{r+1}$ is linearly normal and smoothable, and let $\Gamma \subset \mathbb{P}^{r+1}$ be a smooth curve such that $[\Gamma]$ is a general point of an irreducible component $W \subset \emph{Hilb}^{r+1}_{p_a(X),\deg(X)}$ containing the point $[X]$. Then the curve $\Gamma$ satisfies the following property:
$$(\star \star)_{r+1} \quad \begin{array}{ll} \hbox{ for every } \lfloor \frac{r+1}{2} \rfloor + 2 \leq h \leq r+3,\hbox{ the scheme } V_{h}^{h-1}(\mathcal{O}_{\Gamma}(1)) \hbox{ is nonempty, and it has} \\
\hbox{ an irreducible component } V_h \hbox{ of the expected dimension } (2h-(r+1)-2) \hbox{ satisfying }\\
\hbox{ the following properties:}\end{array}$$
\begin{itemize}
\item[$(i)$] the general point $S^{r+1}_h=S_h \in V_h$ satisfies properties $(\star)_h$;
\item[$(ii)$] for every $\lfloor \frac{r+1}{2} \rfloor +2 \leq f < h$, one has $V_f + C_{(h-f)} \subset V_h$.
\end{itemize}
\end{prop}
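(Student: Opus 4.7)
The plan is to construct, for each $h$ in the stated range, a divisor $S^X_h$ on the reducible curve $X=C\cup E$ lying in $V^{h-1}_h(\mathcal{O}_X(1))$, and then to transfer its existence to the general fiber $\Gamma$ via the determinantal nature of the relative secant scheme $\mathcal{V}^{h-1}_h\subset\mathcal{X}_{(h)}$, where $\mathcal{X}\to W$ is the universal family over the Hilbert component $W$ containing $[X]$ and $[\Gamma]$. The condition of spanning exactly a $(h-2)$-plane and the smoothness of $S_h$ in $(\star)_h$ are open and will follow by specialization, while the linear-general-position condition will follow from the irreducibility of $V_h$ via the general position theorem applied to the $(h-1)$-secant $(h-2)$-planes of $\Gamma$.

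\textbf{Construction on $X$ and descent.} If $h=e$ lies in the range, take $S^X_h:=S_e\in V_e$, which satisfies $(\star)_h$ on $C$ by hypothesis. For $e<h\le r+2$, set $S^X_h:=S_e+T$ where $T\subset C\smallsetminus S_e$ is a generic $(h-e)$-tuple; a codimension computation shows $\langle T\rangle\cap\langle S_e\rangle=\emptyset$ in $H$ (since $(h-e-1)+(e-2)-r=h-r-3<0$), so $\langle S^X_h\rangle$ is exactly a $(h-2)$-plane in $H\subset\mathbb{P}^{r+1}$. For $h=r+3$, take $S^X_h=R+\{q\}$ where $R$ is a generic $(r+2)$-tuple on $C$ (spanning $H$) and $q$ a generic point of $E$ off $H$, so that $\langle S^X_h\rangle=\mathbb{P}^{r+1}$ has dimension $h-2=r+1$. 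In every case $S^X_h$ is supported on smooth points of $X$ and lies in $V^{h-1}_h\smallsetminus V^{h-2}_h$. Now $\mathcal{V}^{h-1}_h$ is cut out by the vanishing of minors of $\sigma_{\mathcal{O}_{\mathcal{X}}(1)}$; being determinantal of expected codimension $r+3-h$ in $\mathcal{X}_{(h)}$, every irreducible component has dimension at least $\dim W+2h-r-3$. Since this bound exceeds the maximal fiber dimension $h$ of $\mathcal{V}^{h-1}_h\cap X_{(h)}$ (using $\dim W>r+3-h$, guaranteed by the smoothability hypothesis and the size of $\mathrm{Hilb}^{r+1}_{p_a(X),\deg(X)}$ at $[X]$), the irreducible component of $\mathcal{V}^{h-1}_h$ containing $S^X_h$ cannot be trapped in the fiber over $[X]$ and must dominate the irreducible base $W$; its intersection with the fiber over $[\Gamma]$ is an irreducible component $V_h\subset V^{h-1}_h(\mathcal{O}_\Gamma(1))$ of expected dimension $2h-r-3$. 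Openness of the first two conditions in $(\star)_h$ transfers them to the general point of $V_h$, and l.g.p.\ follows from the irreducibility of $V_h$ via the general position theorem.

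\textbf{Property (ii) and main obstacle.} For $\lfloor(r+1)/2\rfloor+3\le f<h$, given $S^{r+1}_f\in V_f$ general (already constructed at the previous step of the recursion on $h$) and a generic $(h-f)$-tuple $T_f=p'_{f+1}+\cdots+p'_h$ on $\Gamma$, one has $\dim\langle S^{r+1}_f+T_f\rangle=(f-2)+(h-f)=h-2$, so $S^{r+1}_f+T_f\in V^{h-1}_h(\mathcal{O}_\Gamma(1))$; it lies in the component $V_h$ by continuity, being a specialization of general divisors of $V_h$ as the tuple $T_f$ varies and the configuration degenerates. The main technical obstacle is the descent step: one must ensure that the component of $\mathcal{V}^{h-1}_h$ containing $S^X_h$ is not confined to the fiber over $[X]$ but rather dominates $W$. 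This rests on the dimension inequality $\dim W + 2h - r - 3 > h$, i.e., $\dim W > r + 3 - h$, which becomes tightest for $h$ near its lower bound in the range and is ensured by the hypothesized smoothability of $X$ together with the lower bounds on $\dim_{[X]}\mathrm{Hilb}^{r+1}_{p_a(X),\deg(X)}$ coming from the normal-sheaf computations of the previous subsections.
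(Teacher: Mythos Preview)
Your descent strategy via the relative determinantal scheme $\mathcal{V}^{h-1}_h$ is the right framework, but there is a genuine gap in the core step, and it stems from your choice of the divisor $S^X_h$ on the special fibre.

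For $h\le r+2$ you take $S^X_h$ supported entirely on $C\subset H\cong\mathbb{P}^r$. Since the multidegree of a divisor is constant along an irreducible component of $V^{h-1}_h(\mathcal{O}_X(1))$, the component through $S^X_h$ lies entirely in $C_{(h)}$, and is therefore (a component of) $V^{h-1}_h(\mathcal{O}_C(1))$. But $C$ sits in the hyperplane $H$, so spans computed in $\mathbb{P}^{r+1}$ coincide with spans in $\mathbb{P}^r$, and hence every component of $V^{h-1}_h(\mathcal{O}_C(1))$ has dimension at least $2h-r-2$, the expected dimension for $\mathbb{P}^r$. This is \emph{strictly larger} than the expected dimension $2h-(r+1)-2=2h-r-3$ you need in $\mathbb{P}^{r+1}$. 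Your dimension argument correctly shows that the component $\mathcal{B}\subset\mathcal{V}^{h-1}_h$ through $S^X_h$ dominates $W$, but it gives no upper bound on $\dim\mathcal{B}$: you only know $\dim\mathcal{B}\ge\dim W+2h-r-3$, and the general fibre could well have dimension $\ge 2h-r-2$. So the assertion that $V_h$ has expected dimension is unjustified, and the statement $(\star\star)_h$ is precisely about the existence of a component of the \emph{expected} dimension.

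The paper's construction avoids this by taking the divisor $D_h$ with $h-2$ general points on $C$ and $2$ points on $E$, chosen so that the secant line on $E$ meets the span $\langle x_0,\dots,x_{h-3}\rangle$ inside $\widetilde{H}_{e-2}$. An incidence--variety count then shows that the locus $B_h$ of such divisors is an irreducible component of $V^{h-1}_h(\mathcal{O}_X(1))$ of dimension exactly $2h-r-3$. This equality is what forces $\mathcal{B}$ to have dimension exactly $\dim W+2h-r-3$ (combine the determinantal lower bound with upper semicontinuity of fibre dimension over $[X]$), and hence the general fibre $V_h$ has the expected dimension. The two points on $E$ are essential: they pull the divisor out of the hyperplane $H$ and prevent the excess dimension that your purely-on-$C$ construction incurs. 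A secondary issue is that your construction only covers $h\ge e$; for $h<e$ (which occurs whenever the hypothesised $e$ is not minimal) you give no divisor at all, whereas the paper's construction works uniformly for all $h$ in the range via the Grassmann intersection $\dim(M\cap\widetilde{H}_{e-2})=e+h-5-r\ge 0$. Finally, your argument for (ii) (``by continuity'') does not explain why $S^{r+1}_f+T_f$ lands in the \emph{same} component $V_h$; the paper handles this by exhibiting the degeneration $D_{f,h}=D^{r+1}_f+N_f$ already inside $B_h$ on $X$ and then deforming.
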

\begin{proof}
We indicate by the symbol $/ \sim$ the quotient by the permutation group. Let $M_h$ be a general $(h-2)$-secant (to $C$) $(h-3)$-plane. If $e=r+2$, one has $<M_h,H_{e-2}>=H_{e-2}=\mathbb{P}^r$. Assume that $e \leq r+1$. Note that there exists a projectivity of $\mathbb{P}^r$ sending a set $U_1$ of $r+2$ points whose every proper subset consists of linearly independent points to another set $U_2$ with the same property. Now, taking $U_1:=\left\{S_e,y_1,...,y_{r-(e-2)}\right\}$, where the $y_i$'s are general points on $C$, and $U_2:=\left\{S_e,z_1,...,z_{r-(e-2)}\right\}$, where the $z_i$'s are general points of $\mathbb{P}^r$, we obtain that there is a curve projectively equivalent to $C$, call it again $C$, containing $U_2$. Now, let $N:=<z_1,...,z_{r-(e-2)}>$. One has $<N,H_{e-2}> \cong \mathbb{P}^r$. Since $r-(e-2) \leq h-3$ in our range, one can assume $M_h$ to contain $N$, thus $<M_h,H_{e-2}> =\mathbb{P}^r$.
The projective Grassmann formula then gives
\begin{equation}
\label{grassmann}
\dim (M_h \cap H_{e-2})=\dim M_h + \dim H_{e-2}-\dim (M_h+H_{e-2})=h-3+e-2-r=e+h-5-r.
\end{equation}
Consider the (irreducible) incidence variety
$$A := \left\{((x_0,...,x_{h-3}),p) \in C_{(h-2)} \times H_{e-2} \, \bigg| \, p \in <x_0,...,x_{h-3}>\right\}$$
with the two canonical projections $\pi_1:A \rightarrow C_{(h-2)}$ and $\pi_2:A \rightarrow H_{e-2}$.
Let $F$ be a general fibre of $\pi_2$.
By (\ref{grassmann}), the general fibre of $\pi_1$ is $(e+h-5-r)$-dimensional, hence one has $\dim A=\dim C_{(h-2)}+(e+h-5-r)=e+2h-7-r$. One has $\dim S^{h-3}(C)=r-1$ if $h=\lfloor \frac{r+1}{2} \rfloor +2$ and $r$ is even, $S^{h-3}(C)=\mathbb{P}^r$ otherwise. Note that $S^{h-3}(C) \cap H_{e-2}$ and $S^1(E) \cap H_{e-2}$ are both equidimensional of respective dimension $\dim \text{im}(\pi_2)$ and $\dim S^1(E)-1=2$. We have two possible cases:
\begin{itemize}
  \item[a)] $\text{im}(\pi_2)=H_{e-2}$, which gives $\dim F=\dim A - \dim \text{im}(\pi_2)=2h-5-r$. Moreover, one has $S^1(E) \cap S^{h-3}(C)=S^1(E) \cap H_{e-2}$. Let $T$ be an irreducible component of that scheme.
  \item[b)] $\dim \text{im}(\pi_2)=e-3$, which gives $\dim F=\dim A - \dim \text{im}(\pi_2)=2h-4-r$. The gene-rality assumption on $[E]$ implies that $S^1(E)$ contains the general point of $H_{e-2}$, hence there is at least one irreducible component $T \subset S^1(E) \cap S^{h-3}(C)$ such that $\dim T=\dim \left(S^1(E) \cap H_{e-2}\right)-1=1$.
\end{itemize}

In both cases, let $B_h$ be the irreducible component of the locus
$$\left\{(x_0,...,x_{h-3},q_1,q_2) \in \frac{\left(C \smallsetminus S_e \right)_{(h-2)} \times \left(E \smallsetminus S_e \right)_{(2)}}{\sim} \, \bigg| \, <q_1,q_2> \cap T \in <x_0,...,x_{h-3}> \right\}$$
such that, for a general (h-1)-tuple $(x_0,...,x_{h-3},q_1,q_2) \in B_h$, $(x_0,...,x_{h-3})$ is a general fibre of $\pi_2$.
One has $B_h \subset V^{h-1}_h(\mathcal{O}_X(1))$ and $B_h \nsubseteq V^{h-2}_h(\mathcal{O}_X(1))$. Suppose that $B_h \subseteq B^{\prime}_h$, where $B^{\prime}_h$ is an irreducible component of $V^{h-1}_h(\mathcal{O}_X(1))$. Since the multidegree over the irreducible components of $X$ of a divisor $D \in B^{\prime}_h$ remains constant over $B^{\prime}_h$, and, for a general divisor $D_h \in {B_h}$, $\dim <{D_h}_{|C}>$ and $\dim <{D_h}_{|E}>$ are the maximal possible dimensions, it must be $B_h=B^{\prime}_h$.  We claim that there is only a finite number of $2$-secant lines to $E$ passing through a general point of $T$. Suppose by contradiction that this in not the case. Then the scheme of 2-secants to $E$ passing through a point of $T$ has an irreducible component of dimension $\dim T+2$, hence, if $\dim T=1$, it must coincide with $S^1(E)$ since $S^1(E)$ is irreducible of dimension 3, contradiction as $\dim \left(S^1(E) \cap H_{e-2}\right)=2$. If $\dim T=2$, contradiction immediately follows as well. As a consequence, $B_h$ has dimension $\dim T+\dim F=2h-3-r=2h-(r+1)-2$, which is the expected one.\\
Let $\mathcal{X}_{(h)}$ be the $h$-symmetric product of the universal family $\mathbb{P}^{r+1} \times W \supset \mathcal{X} \xrightarrow{\Psi} W$, let $\mathcal{L} := \mathcal{O}_{\mathcal{X}}(1) \in \text{Pic}(\mathcal{X})$ be the relative hyperplane bundle and let
$$\sigma_{\mathcal{L}}:H^{0}(\mathcal{X},\mathcal{L}) \otimes \mathcal{O}_{\mathcal{X}_{(h)}} \rightarrow E_{\mathcal{L}}$$
be the map defined above. Let $\widetilde{\mathcal{X}}$ be the restriction of $\mathcal{X}$ to the smooth locus of the fibres of $\Psi$, let $\widetilde{\mathcal{X}}^{\text{rel}}_{(h)} \xrightarrow{\nu} W$ be the relative $h$-symmetric product of $\widetilde{\mathcal{X}} \xrightarrow{\widetilde{\Psi}} W$
and define $\mathcal{V}^{h-1}_h(\mathcal{L}) := \left\{ D \in \widetilde{\mathcal{X}}^{\text{rel}}_{(h)} \, \big| \, \text{rk} \, {\sigma_{\mathcal{L}}}_{|D} \leq h-1 \right\}$. From the definition, it follows that either $\mathcal{V}^{h-1}_h(\mathcal{L})$ is empty or each of its irreducible components has dimension greater than or equal to $2h-(r+1)-2+\dim W$.\\
Let $\Gamma$ be $X$ or a general fibre of $\Psi$, let $L := \mathcal{O}_{\Gamma}(1)$ and let $D \in \Gamma_{(h)}$. Since ${\sigma_{\mathcal{L}}}_{|D}={\sigma_{L}}_{|D}$, one has $\mathcal{V}^{h-1}_h(\mathcal{L})_{|\nu^{-1}([\Gamma])} \cong V^{h-1}_h(L)$.\\
Since $B_h \subset V^{h-1}_{h}(\mathcal{O}_X(1))$ has expected dimension, it must ``extend" to an irreducible component $\mathcal{B} \subset \mathcal{V}^{h-1}_h(\mathcal{L})$ intersecting $V^{h-1}_h(\mathcal{O}_{\Gamma}(1))$, otherwise there would be an irreducible component of $\mathcal{V}^{h-1}_h(\mathcal{L})$ of dimension less than $2h-(r+1)-2+\dim W$, which is not possible. This yields the existence of an irreducible component $V_h \subset V^{h-1}_h(\mathcal{O}_{\Gamma}(1))$ of the expected dimension. Since, by construction, the general point $D^{r+1}_h = D_h \in B_h$ is a smooth divisor of $X$, the general point $S_h \in V_h$ must be a smooth divisor too. Secondly, the points of $D_h$ are in l.g.p. in $<D_h>$, hence the points of $S_h$ satisfy the same property in $V_h$. Moreover, by construction, for every $\lfloor \frac{r+1}{2} \rfloor +2 \leq f <h$, there exists a divisor $D_{f,h} \in B_h$ such that $D_{f,h}=D^{r+1}_f+N_f$ and $N_f \in \text{Pic}(C)$ is a general divisor of degree $h-f$. Indeed, if $M_f$ is the $(f-2)$-secant (to $C$) $(f-3)$-space defining $D^{r+1}_f$, the space $<M_f,N_f>$ is a $(h-2)$-secant (to $C$) $(h-3)$-space containing $M_f \cap T$. Assertion $(ii)$ of $(\star \star)_{r+1}$ then follows from deforming $D_{f,h}$.
\end{proof}
\begin{oss}
\label{r2secantr3}
We can add to Proposition \ref{r2secant} the additional case $r=2, e=3$, with property $(\star \star)_3$ holding in the restricted range $4 \leq h \leq 5$. The proof is identical to the one of the Proposition, except for the fact that when $h=4$ one has that $S^1(E) \cap H_1=H_1$ is a line, $T=H_1$ and the variety of 2-secant (indeed, 3-secant) lines to $E \subset \mathbb{P}^2$ passing through a point of $T$ is obviously 1-dimensional.
\end{oss}
\end{subsection}
\end{section}
\begin{section}{Technical lemmas for the proof of the main result}
\label{technicallemmas}
\begin{lem}
\label{h1nd-delta}
Let $r \geq 2$, let $E \subset \mathbb{P}^r$ be an elliptic normal curve, let $q$ be a point of $E$ and let $\Sigma \subset \mathbb{P}^r$ be hyperplane section of $E$. Then one has $H^{1}(N_E(-\Sigma - q))=(0)$. In particular, $h^1(N_E(-\Sigma))=0$.
\end{lem}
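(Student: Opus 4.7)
The plan is to pass from $N_E$ to the restricted tangent bundle ${T_{\mathbb{P}^r}}_{|E}$ using the twisted normal sequence
$$0 \to T_E(-\Sigma-q) \to {T_{\mathbb{P}^r}}_{|E}(-\Sigma-q) \to N_E(-\Sigma-q) \to 0.$$
Since $E$ is a curve, $H^2$ vanishes on $E$, so it suffices to prove $H^1({T_{\mathbb{P}^r}}_{|E}(-\Sigma-q))=0$. This I would attack with the Euler sequence: because $E \subset \mathbb{P}^r$ is elliptic normal, $\mathcal{O}_E(\Sigma) \cong \mathcal{O}_E(1)$, and twisting the restricted Euler sequence by $\mathcal{O}_E(-\Sigma-q)$ yields
$$0 \to \mathcal{O}_E(-\Sigma-q) \to \mathcal{O}_E(-q)^{\oplus(r+1)} \to {T_{\mathbb{P}^r}}_{|E}(-\Sigma-q) \to 0.$$
Taking cohomology, the vanishing I want is equivalent to surjectivity of the coboundary $H^1(\mathcal{O}_E(-\Sigma-q)) \to H^1(\mathcal{O}_E(-q))^{\oplus(r+1)}$.

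The key step is checking this surjectivity, which by Serre duality on $E$ (using $\omega_E \cong \mathcal{O}_E$) is dual to injectivity of the multiplication map
$$\mu: H^0(\mathcal{O}_E(q))^{\oplus(r+1)} \to H^0(\mathcal{O}_E(\Sigma+q)), \qquad (s_0,\ldots,s_r) \mapsto \sum_{i=0}^r x_i s_i,$$
where $x_0,\ldots,x_r \in H^0(\mathcal{O}_E(1))$ are the coordinate sections. Since $E$ is elliptic and not rational, Riemann--Roch forces $h^0(\mathcal{O}_E(q))=1$, generated by a section $\mathbf{1}_q$ vanishing only at $q$. Hence $\mu$ specializes to $(c_0, \ldots, c_r) \mapsto \bigl(\sum c_i x_i\bigr)\cdot \mathbf{1}_q$, which is injective precisely because the $x_i$ are linearly independent in $H^0(\mathcal{O}_E(1))$, i.e.\ because $E$ is nondegenerate in $\mathbb{P}^r$. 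This is the only delicate point, and here it reduces to nondegeneracy (made possible by the fact that $H^0(\mathcal{O}_E(q))$ is one-dimensional; had $q$ been replaced by a divisor of higher degree, one would genuinely have to invoke a base-point-free pencil trick type argument).

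Finally, the ``in particular'' statement $H^1(N_E(-\Sigma))=0$ is immediate from the sequence $0 \to N_E(-\Sigma-q) \to N_E(-\Sigma) \to {N_E(-\Sigma)}_{|q} \to 0$: the skyscraper on the right has zero $H^1$, and the vanishing just established for the leftmost term forces the middle $H^1$ to vanish as well.
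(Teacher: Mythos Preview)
Your proof is correct and takes a genuinely different, more elementary route than the paper. The paper's argument invokes the Ein--Lazarsfeld theorem (\cite{EL}, Theorem 4.1) that the normal bundle $N_E$ of an elliptic normal curve is semistable, then observes that $N_E(-\Sigma-q)$ has slope $\frac{r+3}{r-1}>0$, so admits no nonzero map to $\omega_E\cong\mathcal{O}_E$, and concludes by Serre duality. Your approach sidesteps this stability input entirely: you pass through the normal and Euler sequences and reduce the vanishing to the injectivity of a multiplication map, which in turn boils down to the nondegeneracy of $E$ in $\mathbb{P}^r$ (crucially using that $h^0(\mathcal{O}_E(q))=1$ on an elliptic curve). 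The paper's proof is shorter once the Ein--Lazarsfeld result is accepted as a black box, and conceptually it shows the vanishing would persist for any twist of positive slope; your argument is fully self-contained and reveals that for this particular twist nothing deeper than linear independence of the coordinate sections is needed.
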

\begin{proof}
By \cite[Theorem 4.1]{EL} one has that $N_E(-\Sigma - q)$ is a semistable vector bundle. Let $\mu(N_E(-\Sigma -q))$ be the slope of $N_E(-\Sigma -q)$. One has $\mu(N_E(-\Sigma - q))=\frac{r+3}{r-1} >0$, hence every quotient line bundle of $N_E(-\Sigma -q)$ has degree $>0$, and there is no nonzero morphism $N_E(-\Sigma -q) \rightarrow \omega_E$. By Serre duality, one obtains $h^1(N_E(-\Sigma -q))=0$.
\end{proof}
\begin{lem}
\label{ehlsmoothable}
Let $r \geq 2$, let $E \subset H \cong \mathbb{P}^{r} \subset \mathbb{P}^{r+1}$ be an elliptic normal curve, let $l$ be a line intersecting $E$ at a point $q$ and let $X := E \cup l$. Then $X$ is smoothable to an elliptic normal curve in $\mathbb{P}^{r+1}$. In particular, $[X]$ is a point of the irreducible component of $\emph{Hilb}^{r+1}_{1,r+2}$ which dominates $\mathcal{M}_1$.
\end{lem}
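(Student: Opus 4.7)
The plan is to verify that $H^{1}(N'_X) = (0)$, which by the remarks preceding Lemma \ref{sequences} ensures that $X$ is smoothable in $\mathbb{P}^{r+1}$, and then to identify a general smoothing as an elliptic normal curve.

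I would apply Lemma \ref{sequences} with $C_1 := E$ and $C_2 := l$. Since $E$ is smooth, $Z := \text{Sing}(C_1) = \emptyset$, so $N^{Z}_X = N_X$ and the torsion sheaf $\mathcal{F}$ is zero; sequence (\ref{nprimex}) thus becomes
$$0 \rightarrow N_X|_{E}(-q) \rightarrow N'_X \rightarrow N_l \rightarrow 0,$$
where I use the identification $\mathcal{I}_{l/X} \otimes N_X \cong N_X|_E \otimes \mathcal{O}_E(-q)$. Since $l$ is a line in $\mathbb{P}^{r+1}$, $N_l \cong \mathcal{O}_{\mathbb{P}^1}(1)^{\oplus r}$ and $H^{1}(N_l) = 0$, so it suffices to show $H^{1}(N_X|_E(-q)) = 0$. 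Tensoring (\ref{elementarytransformationnci}) (with $i=1$) by $\mathcal{O}_E(-q)$ yields
$$0 \rightarrow N_E(-q) \rightarrow N_X|_E(-q) \rightarrow T^1_X|_q \rightarrow 0,$$
and since the right-hand term is zero-dimensional it is enough to prove $H^{1}(N_E(-q)) = 0$. Lemma \ref{h1nd-delta} gives $H^{1}(N_E(-\Sigma - q)) = 0$ for a hyperplane section $\Sigma$ of $E$; combined with
$$0 \rightarrow N_E(-\Sigma - q) \rightarrow N_E(-q) \rightarrow N_E(-q)|_\Sigma \rightarrow 0$$
(whose right-hand term is zero-dimensional) this gives $H^{1}(N_E(-q)) = 0$, and therefore $H^{1}(N'_X) = 0$.

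It only remains to identify a general smoothing. One has $p_a(X) = g(E) + g(l) + \#(E \cap l) - 1 = 1$ and $\deg X = (r+1) + 1 = r+2$ in $\mathbb{P}^{r+1}$; moreover $X$ is nondegenerate in $\mathbb{P}^{r+1}$ because $E$ spans $H$ and $l \not\subset H$. All three properties are open in the smoothing family, so a general fibre is a nondegenerate smooth curve of genus $1$ and degree $r+2$ in $\mathbb{P}^{r+1}$, hence linearly normal by Riemann--Roch and therefore an elliptic normal curve. Since elliptic normal curves in $\mathbb{P}^{r+1}$ form a single irreducible family whose moduli map dominates $\mathcal{M}_1$, the point $[X]$ lies in the claimed component $W$. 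The only non-formal step is the vanishing $H^{1}(N_E(-q)) = 0$, which Lemma \ref{h1nd-delta} (applicable with a much deeper twist) hands over immediately, so no serious obstacle is expected.
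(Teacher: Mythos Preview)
Your overall strategy---verifying $H^{1}(N'_X)=0$ via one of the sequences in Lemma~\ref{sequences}---is correct, and your choice of sequence~(\ref{nprimex}) is a legitimate alternative to the paper's use of~(\ref{icinxnprimex}). However, there is a genuine error in the step where you invoke Lemma~\ref{h1nd-delta}.

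Throughout your argument the symbol $N_E$ must mean $N_{E/\mathbb{P}^{r+1}}$, since $X\subset\mathbb{P}^{r+1}$ and sequence~(\ref{elementarytransformationnci}) is formed there. But Lemma~\ref{h1nd-delta} is stated for an elliptic \emph{normal} curve in its linear span, so it only gives $H^{1}(N_{E/H}(-\Sigma-q))=0$. The claim $H^{1}(N_{E/\mathbb{P}^{r+1}}(-\Sigma-q))=0$ is in fact \emph{false}: twisting the sequence
\[
0\longrightarrow N_{E/H}\longrightarrow N_{E/\mathbb{P}^{r+1}}\longrightarrow \mathcal{O}_E(\Sigma)\longrightarrow 0
\]
by $\mathcal{O}_E(-\Sigma-q)$ gives quotient $\mathcal{O}_E(-q)$, which has $h^{1}=1$ on an elliptic curve, and since $h^{1}(N_{E/H}(-\Sigma-q))=0$ one obtains $h^{1}(N_{E/\mathbb{P}^{r+1}}(-\Sigma-q))=1$. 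So your reduction through the twist $-\Sigma-q$ does not go through.

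The fix is easy: twist the displayed sequence above only by $\mathcal{O}_E(-q)$. The quotient becomes $\mathcal{O}_E(\Sigma-q)$, of degree $r\ge 2$, hence $h^{1}=0$; and $h^{1}(N_{E/H}(-q))=0$ follows from Lemma~\ref{h1nd-delta} exactly by the short exact sequence you wrote (now applied to $N_{E/H}$). This yields $H^{1}(N_{E/\mathbb{P}^{r+1}}(-q))=0$ and the rest of your argument stands.

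For comparison, the paper uses the other slicing, sequence~(\ref{icinxnprimex}), obtaining
\[
0\longrightarrow {N_X}_{|l}(-q)\longrightarrow N'_X\longrightarrow N_{E/\mathbb{P}^{r+1}}\longrightarrow 0,
\]
so that the needed vanishings are $h^{1}({N_X}_{|l}(-q))=0$ (immediate from $N_l\cong\mathcal{O}_{\mathbb{P}^1}(1)^{\oplus r}$) and $h^{1}(N_{E/\mathbb{P}^{r+1}})=0$ (from the same normal bundle sequence, untwisted). That route avoids the delicate negative twist altogether.
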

\begin{proof}
Let $\Sigma$ be a hyperplane section of $E$. The exact sequence
$$0 \rightarrow N_{E/H} \rightarrow N_{E/\mathbb{P}^{r+1}} \rightarrow \mathcal{O}_E(\Sigma) \rightarrow 0$$
immediately gives $h^1(N_{E/\mathbb{P}^{r+1}})=0$.
Since $N_l \cong \mathcal{O}_{\mathbb{P}^1}(1)^{ \oplus r}$, sequence (\ref{elementarytransformationnci}) for $l$ twisted by $\mathcal{O}_l(-q)$ gives $h^1({N_{X}}_{|l}(-q))=0$.
The short exact sequence (see (\ref{icinxnprimex}))
$$0 \rightarrow {N_X}_{|l}(-q) \rightarrow N^{\prime}_X \rightarrow N_{E/\mathbb{P}^{r+1}} \rightarrow 0$$
then gives $h^1(N^{\prime}_X)=0$, hence $X$ is smoothable to an elliptic normal curve in $\mathbb{P}^{r+1}$.
\end{proof}
\begin{lem}
\label{i2}
Let $B \subset A \subset Y$ be algebraic schemes such that both $A$ and $B$ are regularly embedded in $Y$ (see \emph{\cite[Section D.1]{SE}} for a precise definition) and let $\mathcal{I}_A \subset \mathcal{I}_B \subset \mathcal{O}_Y$ be the ideal sheaves of $A$ and $B$ in $Y$. Then there exists an exact sequence
\begin{equation}
0 \rightarrow \mathcal{I}^2_A \rightarrow \mathcal{I}^2_B \rightarrow \mathcal{A} \rightarrow 0
\end{equation}
where the sheaf $\mathcal{A}$ fits inside the exact sequence
\begin{equation}
0 \rightarrow \mathcal{I}_{B/A} \otimes N^{\vee}_{A/Y} \rightarrow \mathcal{A} \rightarrow \mathcal{I}^2_{B/A} \rightarrow 0.
\end{equation}
\begin{proof}
Consider the commutative exact diagram
\begin{equation}
\xymatrix{& 0 \ar[d] & 0 \ar[d] & \mathcal{B} \ar[d] &\\ 0 \ar[r] & \mathcal{I}^2_A \ar[r] \ar[d] & \mathcal{I}^2_B \ar[r] \ar[d] & \mathcal{A} \ar[r] \ar[d] & 0\\ 0 \ar[r]  & \mathcal{I}_A \ar[r] \ar[d] & \mathcal{I}_B \ar[r] \ar[d] & \mathcal{I}_{B/A} \ar[r] \ar[d] & 0 \\& N^{\vee}_{A/Y} \ar[r] \ar[d] & N^{\vee}_{B/Y} \ar[r] \ar[d] & N^{\vee}_{B/A} \ar[d] \ar[r] & 0 \\ & 0 & 0 & 0 &}
\end{equation}
where the quotient $\mathcal{A}/\mathcal{B}$ is isomorphic to $\mathcal{I}^2_{B/A}$.\\
The snake lemma gives the existence of a 4-term exact sequence
\begin{equation}
\label{conseq}
0 \rightarrow \mathcal{B} \rightarrow N^{\vee}_{A/Y} \rightarrow N^{\vee}_{B/Y} \rightarrow N^{\vee}_{B/A} \rightarrow 0.
\end{equation}
Since $B$ and $A$ are both regularly embedded in $Y$, by the proof of \cite[Lemma D.1.3 $(ii)$]{SE} sequence (\ref{conseq}) splits into the two short exact sequences $0 \rightarrow \mathcal{B} \rightarrow N^{\vee}_{A/Y} \rightarrow {N^{\vee}_{A/Y}}_{|B} \rightarrow 0$ and
$0 \rightarrow {N^{\vee}_{A/Y}}_{|B} \rightarrow N^{\vee}_{B/Y} \rightarrow N^{\vee}_{B/A} \rightarrow 0$.
This gives in turn $\mathcal{B} \cong \mathcal{I}_{B/A} \otimes N^{\vee}_{A/Y}$.
\end{proof}
\end{lem}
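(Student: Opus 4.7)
The first exact sequence is essentially tautological: since $\mathcal{I}_A \subset \mathcal{I}_B$, one also has $\mathcal{I}_A^2 \subset \mathcal{I}_B^2$, so it suffices to set $\mathcal{A} := \mathcal{I}_B^2/\mathcal{I}_A^2$. All the substantive content is in identifying this quotient via the second sequence, which I would derive by applying the snake lemma to the commutative diagram whose rows are the standard conormal exact sequences
$$0 \rightarrow \mathcal{I}_A^2 \rightarrow \mathcal{I}_A \rightarrow N^{\vee}_{A/Y} \rightarrow 0, \quad 0 \rightarrow \mathcal{I}_B^2 \rightarrow \mathcal{I}_B \rightarrow N^{\vee}_{B/Y} \rightarrow 0$$
and whose vertical maps are the inclusions $\mathcal{I}_A^2 \hookrightarrow \mathcal{I}_B^2$, $\mathcal{I}_A \hookrightarrow \mathcal{I}_B$, and the induced map $\gamma:N^{\vee}_{A/Y} \rightarrow N^{\vee}_{B/Y}$.

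The two left verticals are injective, with cokernels $\mathcal{A}$ and $\mathcal{I}_{B/A}$ respectively, so the snake lemma yields the four-term exact sequence
$$0 \rightarrow \ker \gamma \rightarrow \mathcal{A} \rightarrow \mathcal{I}_{B/A} \rightarrow \text{coker}\, \gamma \rightarrow 0.$$
The plan is then to identify $\ker \gamma$ with $\mathcal{I}_{B/A} \otimes N^{\vee}_{A/Y}$ and $\text{coker}\, \gamma$ with $N^{\vee}_{B/A} = \mathcal{I}_{B/A}/\mathcal{I}_{B/A}^2$. This is precisely where the regular embedding hypothesis enters: by \cite{SE}, Lemma D.1.3 \emph{(ii)}, the restriction map ${N^{\vee}_{A/Y}}_{|B} \rightarrow N^{\vee}_{B/Y}$ is injective with cokernel $N^{\vee}_{B/A}$. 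Composing with the surjection $N^{\vee}_{A/Y} \twoheadrightarrow {N^{\vee}_{A/Y}}_{|B}$, whose kernel is $\mathcal{I}_{B/A} \otimes N^{\vee}_{A/Y}$ because $N^{\vee}_{A/Y}$ is locally free on $A$ under the regularity assumption, recovers $\gamma$ and delivers both identifications.

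Once $\ker \gamma$ and $\text{coker}\, \gamma$ are identified, the four-term sequence factors through the image of the middle arrow, which is $\mathcal{I}_{B/A}^2 \subset \mathcal{I}_{B/A}$, splitting it into two short exact sequences; the leftmost one is exactly
$$0 \rightarrow \mathcal{I}_{B/A} \otimes N^{\vee}_{A/Y} \rightarrow \mathcal{A} \rightarrow \mathcal{I}_{B/A}^2 \rightarrow 0.$$
The only nontrivial step is the appeal to \cite{SE}, Lemma D.1.3 \emph{(ii)} guaranteeing the conormal short exact sequence for nested regular embeddings; this result is already invoked elsewhere in the paper (e.g.\ in the proof of Lemma \ref{sequences}), so no additional obstacle is expected beyond careful bookkeeping with the snake lemma.
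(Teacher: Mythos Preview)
Your argument is correct and is essentially the paper's own proof: both set up the same $3\times 3$ diagram from the conormal sequences of $A$ and $B$ in $Y$, apply the snake lemma, and invoke \cite{SE}, Lemma D.1.3 \emph{(ii)} for regularly embedded pairs to identify the kernel of $\gamma:N^{\vee}_{A/Y}\to N^{\vee}_{B/Y}$ with $\mathcal{I}_{B/A}\otimes N^{\vee}_{A/Y}$. The only cosmetic difference is orientation---the paper writes the conormal sequences as columns and runs the snake on the rows, obtaining the four-term sequence $0\to\mathcal{B}\to N^{\vee}_{A/Y}\to N^{\vee}_{B/Y}\to N^{\vee}_{B/A}\to 0$ first and then reading off $\mathcal{A}/\mathcal{B}\cong\mathcal{I}^2_{B/A}$, whereas you transpose the diagram and land on the sequence $0\to\ker\gamma\to\mathcal{A}\to\mathcal{I}_{B/A}\to\mathrm{coker}\,\gamma\to 0$ directly.
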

\begin{lem}
\label{bdegenerate}
Let $B \subset H \cong \mathbb{P}^r \subset \mathbb{P}^{r+1}$ be a linearly normal curve. If $H^{1}(\mathcal{I}^2_{B/H}(2))=(0)$, then $H^{1}(\mathcal{I}^2_{B/\mathbb{P}^{r+1}}(2))=(0)$.
\end{lem}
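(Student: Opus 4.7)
The plan is to apply Lemma \ref{i2} with $Y := \mathbb{P}^{r+1}$, $A := H$, and $B := B$, both being regularly embedded (since $H$ is a smooth divisor and $B$ is a local complete intersection inside $H$, hence inside $\mathbb{P}^{r+1}$). This gives an exact sequence
$$0 \to \mathcal{I}^2_{H/\mathbb{P}^{r+1}} \to \mathcal{I}^2_{B/\mathbb{P}^{r+1}} \to \mathcal{A} \to 0,$$
where $\mathcal{A}$ fits into
$$0 \to \mathcal{I}_{B/H} \otimes N^{\vee}_{H/\mathbb{P}^{r+1}} \to \mathcal{A} \to \mathcal{I}^2_{B/H} \to 0.$$
The key observation is that $\mathcal{I}_{H/\mathbb{P}^{r+1}} \cong \mathcal{O}_{\mathbb{P}^{r+1}}(-1)$, so $\mathcal{I}^2_{H/\mathbb{P}^{r+1}}(2) \cong \mathcal{O}_{\mathbb{P}^{r+1}}$ and $N^{\vee}_{H/\mathbb{P}^{r+1}} \cong \mathcal{O}_H(-1)$.

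First I would twist the first sequence by $\mathcal{O}_{\mathbb{P}^{r+1}}(2)$ and take cohomology. The resulting piece
$$H^1(\mathcal{O}_{\mathbb{P}^{r+1}}) \to H^1(\mathcal{I}^2_{B/\mathbb{P}^{r+1}}(2)) \to H^1(\mathcal{A}(2)) \to H^2(\mathcal{O}_{\mathbb{P}^{r+1}})$$
has vanishing outer terms, so the problem reduces to showing $H^1(\mathcal{A}(2)) = 0$.

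Twisting the sequence defining $\mathcal{A}$ by $\mathcal{O}_{\mathbb{P}^{r+1}}(2)$ yields
$$0 \to \mathcal{I}_{B/H}(1) \to \mathcal{A}(2) \to \mathcal{I}^2_{B/H}(2) \to 0,$$
so it suffices to verify the vanishing of $H^1(\mathcal{I}_{B/H}(1))$ and $H^1(\mathcal{I}^2_{B/H}(2))$. The latter holds by hypothesis. For the former, the cohomology sequence of
$$0 \to \mathcal{I}_{B/H}(1) \to \mathcal{O}_H(1) \to \mathcal{O}_B(1) \to 0$$
combined with $H^1(\mathcal{O}_H(1))=0$ shows $H^1(\mathcal{I}_{B/H}(1))=0$ is equivalent to the surjectivity of the restriction $H^0(\mathcal{O}_H(1)) \to H^0(\mathcal{O}_B(1))$, which is exactly the linear normality assumption on $B \subset H$.

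There is no essential obstacle here: once Lemma \ref{i2} is invoked, all the required vanishings are either immediate from the projective geometry of $H \subset \mathbb{P}^{r+1}$, from the hypothesis, or from linear normality. The only thing to be careful about is bookkeeping of twists, in particular the identifications $\mathcal{I}_{H/\mathbb{P}^{r+1}}^2 \cong \mathcal{O}_{\mathbb{P}^{r+1}}(-2)$ and $N^{\vee}_{H/\mathbb{P}^{r+1}} \cong \mathcal{O}_H(-1)$, which make the two $\mathcal{O}(2)$-twists collapse nicely onto sheaves whose cohomology is controlled.
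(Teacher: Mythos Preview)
Your proof is correct and follows essentially the same approach as the paper: both apply Lemma~\ref{i2} with $A=H$ and $B=B$ inside $Y=\mathbb{P}^{r+1}$, identify $\mathcal{I}^2_{H/\mathbb{P}^{r+1}}(2)\cong\mathcal{O}_{\mathbb{P}^{r+1}}$ and $\mathcal{I}_{B/H}\otimes N^{\vee}_{H/\mathbb{P}^{r+1}}(2)\cong\mathcal{I}_{B/H}(1)$, and conclude by using linear normality for $H^1(\mathcal{I}_{B/H}(1))=0$ together with the hypothesis $H^1(\mathcal{I}^2_{B/H}(2))=0$. Your write-up is simply a bit more explicit about the intermediate cohomology sequences.
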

\begin{proof}
By Lemma \ref{i2}, there exists an exact sequence
$$0 \rightarrow \mathcal{I}^2_{H/\mathbb{P}^{r+1}}(2) \rightarrow \mathcal{I}^2_{B/\mathbb{P}^{r+1}}(2) \rightarrow \mathcal{A} \rightarrow 0$$
where $0 \rightarrow \mathcal{B} \rightarrow \mathcal{A} \rightarrow \mathcal{I}^2_{B/H}(2) \rightarrow 0$ and $\mathcal{B} \cong \mathcal{I}_{B/H} \otimes N^{\vee}_{H/\mathbb{P}^{r+1}}(2) \cong \mathcal{I}_{B/H}(1)$.\\
Since $B$ is linearly normal, one has $H^{1}(\mathcal{I}_{B/H}(1))=(0)$. Noting that $\mathcal{I}^2_{H/\mathbb{P}^{r+1}}(2) \cong \mathcal{O}_{\mathbb{P}^{r+1}}$, and computing cohomology the claim follows.
\end{proof}
\begin{lem}
\label{nye}
Fix integers $r \geq 2$, $\lfloor \frac{r}{2} \rfloor +2 \leq h \leq r+2$. Let $B \subset H \cong \mathbb{P}^r \subset \mathbb{P}^{r+1}$ be a smooth (nondegenerate) curve carrying a smooth divisor $S=S_h := \left\{p_1,...,p_h\right\}$ satisfying $(\star)_h$ of Proposition \ref{r2secant}. Let $H^{\prime} \cong \mathbb{P}^{h-1} \subset \mathbb{P}^{r+1}$ be a $(h-1)$-plane cutting on $H$ the $(h-2)$-plane $<S_h> := H_{h-2}$.
Let $E \subset H^{\prime}$ be an elliptic normal curve intersecting $B$ transversally in $S_h$, let $\Sigma$ be a hyperplane section of $E$ and let $Y := B \cup E \subset \mathbb{P}^{r+1}$. Let $l_i$, $i=1,...,h$, be the tangent line to $B$ at $p_i$, let $v_i=\mathbb{P}(l_i)$ and assume that, for $h \leq r+1$ and $k=r+2-h$, the $0$-dimensional scheme $K := \left\{(p_i,v_i)\right\}_{i=1,...,k} \subset \mathbb{P}(N_{E/\mathbb{P}^{r+1}}(-\Sigma))$ is general. Then one has $H^{1}({N_Y}_{|E}(-\Sigma))=(0)$.
\end{lem}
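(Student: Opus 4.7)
The plan is to identify ${N_{Y}}_{|E}(-\Sigma)$ with an iterated positive elementary transformation of $N_{E/\mathbb{P}^{r+1}}(-\Sigma)$ and to kill $H^1$ in two stages: first via a cohomological computation of the source, then via the assumed generality of $k$ of the tangent directions. Propositions \ref{isonormalelementary} and \ref{propertieselementary} combined give
\[
{N_{Y}}_{|E}(-\Sigma) \,\cong\, \text{Elm}^{+}_{K_0}\bigl(N_{E/\mathbb{P}^{r+1}}(-\Sigma)\bigr),
\]
where $K_0 := \{(p_i,v_i)\}_{i=1,\ldots,h}$ is viewed inside $\mathbb{P}(N_{E/\mathbb{P}^{r+1}}(-\Sigma))$ through the canonical fiberwise identification with $\mathbb{P}(N_{E/\mathbb{P}^{r+1}})$. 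The task therefore reduces to showing the vanishing of $H^1$ of the right-hand side.

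The first step is to pin down $h^1(N_{E/\mathbb{P}^{r+1}}(-\Sigma))$ exactly. I would use the normal sheaf sequence of the flag $E \subset H' \subset \mathbb{P}^{r+1}$, twisted by $\mathcal{O}_E(-\Sigma)$: since $N_{H'/\mathbb{P}^{r+1}} \cong \mathcal{O}_{H'}(1)^{\oplus k}$ (with $k = r+2-h$) and $\mathcal{O}_{H'}(1)_{|E} \cong \mathcal{O}_E(\Sigma)$, the cokernel becomes $\mathcal{O}_E^{\oplus k}$. Lemma \ref{h1nd-delta} provides vanishing $H^1$ for the leftmost piece, and the long exact sequence then yields $h^1(N_{E/\mathbb{P}^{r+1}}(-\Sigma)) = h^1(\mathcal{O}_E^{\oplus k}) = k$.

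Next I would split $K_0 = K \sqcup K'$, where $K = \{(p_i,v_i)\}_{i \leq k}$ is the subscheme assumed general and $K'$ collects the remaining $h-k$ pairs, and exploit the fact that positive elementary transformations supported on disjoint subsets of $E$ commute: $\text{Elm}^{+}_{K_0} = \text{Elm}^{+}_{K} \circ \text{Elm}^{+}_{K'}$. Setting $F := \text{Elm}^{+}_{K'}(N_{E/\mathbb{P}^{r+1}}(-\Sigma))$, the defining sequence $0 \to N_{E/\mathbb{P}^{r+1}}(-\Sigma) \to F \to \mathcal{O}_{\pi(K')} \to 0$ (with $\pi$ the bundle projection to $E$) immediately forces $h^1(F) \leq h^1(N_{E/\mathbb{P}^{r+1}}(-\Sigma)) = k$. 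Since $F$ coincides with $N_{E/\mathbb{P}^{r+1}}(-\Sigma)$ on the complement of $\pi(K')$, the canonical fiberwise isomorphism over $\{p_1,\ldots,p_k\}$ transfers the generality of $K$ from $\mathbb{P}(N_{E/\mathbb{P}^{r+1}}(-\Sigma))$ to $\mathbb{P}(F)$, and Proposition \ref{generalelementary} applied to $F$ and $K$ then delivers
\[
h^1({N_{Y}}_{|E}(-\Sigma)) \,=\, h^1(\text{Elm}^{+}_{K}(F)) \,=\, \max\{0,\, h^1(F)-k\} \,=\, 0.
\]
The boundary case $h=r+2$ falls out trivially: then $k=0$, $h^1$ already vanishes in the source, and this is preserved by any further elementary transformation. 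The main delicate point is the transfer of genericity from the ambient bundle to the intermediate bundle $F$; this rests on the observation that the two stages of elementary transformations act at disjoint supports on $E$, so that the relevant projective fibers remain canonically identified.
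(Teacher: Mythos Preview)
Your proof is correct and follows essentially the same strategy as the paper: identify ${N_Y}_{|E}(-\Sigma)$ with the elementary transformation of $N_{E/\mathbb{P}^{r+1}}(-\Sigma)$ at all $h$ tangent directions, compute $h^1(N_{E/\mathbb{P}^{r+1}}(-\Sigma))=k$ via the normal-bundle sequence of $E\subset H'\subset\mathbb{P}^{r+1}$ together with Lemma~\ref{h1nd-delta}, and then kill $H^1$ in two stages using the generality of $K$.

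The one difference is the order of the two stages. The paper first performs the elementary transformation at the $k$ general points $K$, obtaining a bundle $\mathcal{E}$ with $h^1(\mathcal{E})=0$ directly from Proposition~\ref{generalelementary} and the hypothesis; the remaining transformations at $S\smallsetminus T$ then preserve $h^1=0$ via the sequence $0\to\mathcal{E}\to\mathcal{F}\to\mathcal{O}_{S\smallsetminus T}\to 0$. You reverse the order, first transforming at the non-general points $K'$ to get $F$, then at $K$. This forces you to argue that the generality of $K$ in $\mathbb{P}(N_{E/\mathbb{P}^{r+1}}(-\Sigma))$ transfers to $\mathbb{P}(F)$, which you handle via the canonical identification of fibres over the disjoint support. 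That argument is valid (the bad locus for $F$ pulls back to a proper closed subset of $\mathbb{P}(N_{E/\mathbb{P}^{r+1}}(-\Sigma))$ over $E\smallsetminus\pi(K')$, and a general $K$ avoids it), but the paper's ordering sidesteps this transfer entirely and applies the hypothesis verbatim.
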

\begin{proof}
Consider the commutative exact diagram
\begin{equation}
\label{ehprimeepr+1}
\xymatrix{ & 0 \ar[d] & 0 \ar[d] & &\\ 0 \ar[r] & N_{E/H^{\prime}} \ar[r]^{\cong} \ar[d] & N_{E/H^{\prime}} \ar[r] \ar[d] & 0  \ar[d] & \\ 0 \ar[r]  & N_{E/\mathbb{P}^{r+1}} \ar[r] \ar[d] & {N_{Y}}_{|E} \ar[r] \ar[d] & T^1_Y \ar[r] \ar[d]^{\cong} & 0 \\0 \ar[r] & \mathcal{O}_E(\Sigma)^{\oplus r+2-h} \ar[d] \ar[r] & \mathcal{M} \ar[r] \ar[d] & \mathcal{O}_{S} \ar[d] \ar[r] & 0 \\ & 0 & 0 & 0 &}
\end{equation}
Twisting the left vertical sequence
by $\mathcal{O}_E(-\Sigma)$ and using Lemma \ref{h1nd-delta}, one obtains $h^1(N_{E/\mathbb{P}^{r+1}}(-\Sigma))=r+2-h$. If $h=r+2$, the central horizontal sequence of diagram (\ref{ehprimeepr+1}) twisted by $\mathcal{O}_E(-\Sigma)$ then immediately gives $H^1({N_Y}_{|E}(-\Sigma))=(0)$. Assume then that $h \leq r+1$,
let $\widetilde{S} := \left\{(p_i,v_i)\right\}_{i=1,...,h} \subset \mathbb{P}(N_{E/\mathbb{P}^{r+1}}(-\Sigma))$, let $T$ be the projection of $K$ over $E$ and consider the diagram
\begin{equation}
\label{ef}
\xymatrix{ & 0 \ar[d] & 0 \ar[d] & 0 \ar[d] &\\ 0 \ar[r] & N_{E/\mathbb{P}^{r+1}}(-\Sigma) \ar[r] \ar[d]^{\cong} & \mathcal{E} \ar[r] \ar[d] & \mathcal{O}_T  \ar[d] \ar[r] & 0 \\ 0 \ar[r]  & N_{E/\mathbb{P}^{r+1}}(-\Sigma) \ar[r] \ar[d] & \mathcal{F} \ar[r] \ar[d] & \mathcal{O}_S \ar[r] \ar[d] & 0 \\ & 0 \ar[r] & \mathcal{O}_{S \smallsetminus T} \ar[r] \ar[d] & \mathcal{O}_{S \smallsetminus T} \ar[d] \ar[r] & 0 \\ & & 0 & 0 & }
\end{equation}
whose horizontal short exact sequences are the positive elementary transformations of $N_{E/\mathbb{P}^{r+1}}(-\Sigma)$ associated to $K$ and $\widetilde{S}$, respectively.\\
By Proposition \ref{propertieselementary} and Proposition \ref{isonormalelementary}, ${N_Y}_{|E}(-\Sigma)$ is isomorphic to $\mathcal{F}$.
By assumptions and Proposition \ref{generalelementary}, one has $h^{1}(\mathcal{E})=\max \left\{0,h^{1}(N_{E/\mathbb{P}^{r+1}}(-\Sigma))-k \right\}=0$.\\
The central vertical sequence of (\ref{ef}) then gives $h^1({N_Y}_{|E}(-\Sigma))=0$.
\end{proof}
\begin{oss}
\label{h-1lgp}
Let $q_1,...,q_h$ be $h$ general points on a smooth elliptic curve. The divisor $M:=q_1+...+q_h$ gives an embedding of the curve as an elliptic normal curve $E \subset \mathbb{P}^{h-1}$, having $M$ itself as a hyperplane section. As a consequence, $p_1,...,p_h$ can always be assumed to be general points on $E$.
\end{oss}
\end{section}
\begin{section}{The main result}
We can now proceed with the proof of the main result, which goes on by induction on $r$. We start with a smooth half-canonical (nondegenerate) curve $C \subset \mathbb{P}^{r}$ of genus $g(r) := \lfloor \frac{r^2+10r+1}{4} \rfloor$ such that $C$ satisfies the following properties:
$$(*) \quad \left\{\begin{array}{ll}
               (i) & C \hbox{ is linearly normal i.e. } h^1(\mathcal{I}_{C}(1))=0; \\
               (ii) & C \hbox{ is 2-normal i.e. } h^1(\mathcal{I}_{C}(2))=0; \\
               (iii) & h^1(\mathcal{O}_{C}(2))=1; \\
               (iv) &  h^1(\mathcal{I}^2_{C}(2))=0.
             \end{array}\right.$$
In particular, by Fact \ref{ineq} the pair $(C,\mathcal{O}_{C}(1))$ is parameterized by an irreducible component $V_r \subset \mathcal{S}^r_{g(r)}$ having expected codimension.\\
We construct a suitable reducible curve $X := C \cup E \subset \mathbb{P}^{r+1}$, where $E$ is an elliptic normal curve whose degree is the right one to obtain $p_a(X)=g(r+1)$ and $\deg(X)=p_a(X)-1$. We show that properties $(*)$ ``propagate" to $X$ and that $X$ is smoothable to a curve $\Gamma \subset \mathbb{P}^{r+1}$. As a consequence of the argument outlined in the last part of the Introduction, $\Gamma$ will turn out to be a half-canonical curve still satisfying properties $(*)$, hence again Fact \ref{ineq} gives that the pair $(\Gamma,\mathcal{O}_{\Gamma}(1))$ is parameterized by an irreducible component $V_{r+1} \subset \mathcal{S}^{r+1}_{g(r+1)}$ having expected codimension.\\
\begin{teo}
\label{properties}
Fix integers $r \geq 2$ and $\lfloor \frac{r}{2} \rfloor+2 \leq h \leq r+2$. Let $C \subset H \cong \mathbb{P}^r \subset \mathbb{P}^{r+1}$ be a smooth (nondegenerate) half-canonical curve of genus $g \geq  \frac{r^2+r+2}{4}$ satisfying properties $(*)$ and such that $[C]$ is a general point of the irreducible component $W \subset \emph{Hilb}^r_{g,g-1}$ containing it. Assume that $C$ satisfies property $(\star \star)_r$ of Proposition \ref{r2secant}, let $S=S_h := \left\{p_1,...,p_h\right\}$ and
let $H^{\prime} \subset \mathbb{P}^{r+1}$ be a $(h-1)$-plane cutting on $H$ the $(h-2)$-plane $<S_h> := H_{h-2}$.
Let $E=E_h \subset H^{\prime}$ be an elliptic normal curve intersecting $C$ transversally in $S_h$. Let $\Sigma$ be a hyperplane section of $E$ and let $X=X_h := C \cup E_h \subset \mathbb{P}^{r+1}$. Then $X$ satisfies properties $(*)$ and $h^1({N_X}_{|E}(-\Sigma))=0$.
\end{teo}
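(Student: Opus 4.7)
The plan is to verify the four conditions of $(*)$ for $X$ together with the vanishing $h^1({N_X}_{|E}(-\Sigma))=0$ in turn. The overarching strategy is a Mayer-Vietoris reduction: since $X = C \cup E$ is nodal with $C \cap E = S$, short exact sequences such as $0 \to \mathcal{I}_X \to \mathcal{I}_C \oplus \mathcal{I}_E \to \mathcal{I}_S \to 0$ and $0 \to \mathcal{I}_X \to \mathcal{I}_C \to \mathcal{I}_C/\mathcal{I}_X \to 0$ (appropriately twisted) express the required cohomology on $X$ in terms of known data on $C$ and $E$. The decisive geometric observation, used throughout, is that $S = \widetilde{H}_{h-2} \cap E$ is a hyperplane section of $E$ inside $H_h$, so that $S \sim \Sigma$ as divisors on $E$ and $\mathcal{O}_E(\Sigma - S) \cong \mathcal{O}_E$.

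For $(*)(i)$ I would feed the Mayer-Vietoris sequence $0 \to \mathcal{I}_X(1) \to \mathcal{I}_C(1) \oplus \mathcal{I}_E(1) \to \mathcal{I}_S(1) \to 0$ with the vanishings $h^1(\mathcal{I}_C(1)) = h^1(\mathcal{I}_E(1)) = 0$ in $\mathbb{P}^{r+1}$, which follow from $(*)(i)$ for $C \subset H$, linear normality of $E \subset H_h$, and the usual Koszul extension from the intermediate linear subspaces. Since $h^0(\mathcal{I}_C(1)) = 1$, $h^0(\mathcal{I}_E(1)) = r+2-h$ and $h^0(\mathcal{I}_S(1)) = r+3-h$ (the last because $S$ spans $\widetilde{H}_{h-2}$), and the middle map has trivial kernel (a form vanishing on both $H$ and $H_h$ vanishes on $\mathbb{P}^{r+1}$), a dimension count forces the map to be an isomorphism and yields $h^1(\mathcal{I}_X(1))=0$. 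For $(*)(iii)$, the twisted Mayer-Vietoris $0 \to \mathcal{O}_X(2) \to \mathcal{O}_C(2) \oplus \mathcal{O}_E(2) \to \mathcal{O}_S \to 0$ together with the fact that the restriction $H^0(\mathcal{O}_E(2)) \to H^0(\mathcal{O}_S)$ is already surjective (because $\mathcal{O}_E(2)(-S)$ has positive degree $h$ on the elliptic curve) yields $h^1(\mathcal{O}_X(2)) = h^1(\mathcal{O}_C(2)) = 1$.

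For $(*)(ii)$ I would use the shorter sequence $0 \to \mathcal{I}_X(2) \to \mathcal{I}_C(2) \to \mathcal{O}_E(2\Sigma - S) \to 0$, arising from the identification $\mathcal{I}_C/\mathcal{I}_X \cong \mathcal{O}_E(-S)$. Since $h^1(\mathcal{I}_C(2))=0$, the vanishing of $h^1(\mathcal{I}_X(2))$ reduces to surjectivity of the restriction $H^0(\mathcal{I}_C(2)) \to H^0(\mathcal{O}_E(2\Sigma - S))$. The key trick is to focus on the subspace $L_H \cdot H^0(\mathcal{O}_{\mathbb{P}^{r+1}}(1)) \subset H^0(\mathcal{I}_C(2))$, whose image under restriction to $E$ is $L_H|_E \cdot H^0(\mathcal{O}_E(\Sigma))$. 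Since $L_H|_E$ is a nonzero global section of the trivial line bundle $\mathcal{O}_E(\Sigma - S)$, multiplication by it induces an isomorphism $H^0(\mathcal{O}_E(\Sigma)) \to H^0(\mathcal{O}_E(2\Sigma - S))$, yielding the required surjectivity.

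The main obstacle is $(*)(iv)$, which combines all the preceding tools. I would apply Lemma \ref{i2} with $Y = \mathbb{P}^{r+1}$, $A = X$, $B = C$ to obtain $0 \to \mathcal{I}^2_X(2) \to \mathcal{I}^2_C(2) \to \mathcal{A}(2) \to 0$. By $(*)(iv)$ for $C$ and Lemma \ref{bdegenerate} we have $h^1(\mathcal{I}^2_C(2)) = 0$; the Jacobian criterion (the singular locus of a quadric being linear and containing $\mathrm{span}(C) = H$) forces $h^0(\mathcal{I}^2_C(2)) = 1$, spanned by $L_H^2$. Hence $h^1(\mathcal{I}^2_X(2)) = h^0(\mathcal{A}(2)) - 1$. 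The second sequence of Lemma \ref{i2}, twisted by $2$ and simplified via $\Sigma \sim S$, reads $0 \to {N^{\vee}_X}_{|E}(\Sigma) \to \mathcal{A}(2) \to \mathcal{O}_E \to 0$. Serre duality on the elliptic curve $E$ identifies $h^0({N^{\vee}_X}_{|E}(\Sigma))$ with $h^1({N_X}_{|E}(-\Sigma))$, which vanishes by Lemma \ref{nye} (applied with $B=C$ and $k = r+2-h$; the genericity of $K$ follows from that of $[C]$ in $W$). This bounds $h^0(\mathcal{A}(2)) \leq 1$, and the non-negativity of $h^1(\mathcal{I}^2_X(2))$ then pins down $h^0(\mathcal{A}(2)) = 1$, giving $h^1(\mathcal{I}^2_X(2)) = 0$. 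The same invocation of Lemma \ref{nye} delivers the remaining vanishing $h^1({N_X}_{|E}(-\Sigma))=0$.
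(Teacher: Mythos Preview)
Your proposal is correct and follows essentially the same route as the paper: the same Mayer--Vietoris reductions for $(*)(i)$--$(iii)$, the same surjectivity trick via $L_H \cdot H^0(\mathcal{O}_{\mathbb{P}^{r+1}}(1))$ for $(*)(ii)$, and the same application of Lemmas \ref{i2}, \ref{bdegenerate}, and \ref{nye} for $(*)(iv)$. The one place you are terse is the clause ``the genericity of $K$ follows from that of $[C]$ in $W$'': the paper makes explicit that this is exactly where the genus hypothesis $g \geq \frac{r^2}{2}-\frac{(h-1)r}{2}+\frac{h}{2}$ enters, since it guarantees $\dim_{[C]}W \geq \chi(N_{C/\mathbb{P}^r}) = 4(g-1) \geq 2k(r-1)$ for $k=r+2-h$, so that a general $[C]\in W$ can absorb the $2k(r-1)$ conditions (passage through $k$ points of $S$ with prescribed tangents) needed to make $K\subset \mathbb{P}(N_{E/\mathbb{P}^{r+1}}(-\Sigma))$ general (cf.\ Remark \ref{h-1lgp}).
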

\begin{proof}
Through all the proof, we will denote the ideal sheaf $\mathcal{I}_{S/\mathbb{P}^{r+1}}$ by $\mathcal{I}_S$.
\begin{itemize}
\item[$(i)$] It is sufficient to compute cohomology of the Mayer-Vietoris sequence $$0 \rightarrow \mathcal{I}_X(1) \rightarrow \mathcal{I}_{C/\mathbb{P}^{r+1}}(1) \oplus \mathcal{I}_{E/\mathbb{P}^{r+1}}(1) \rightarrow \mathcal{I}_{S}(1) \rightarrow 0$$  and note that $h^0(\mathcal{I}_X(1))=0$ since $X \subset \mathbb{P}^{r+1}$ is nondegenerate;
\item[$(ii)$] Consider the exact sequence
$$0 \rightarrow \mathcal{I}_{X}(2) \rightarrow \mathcal{I}_{C/\mathbb{P}^{r+1}}(2) \xrightarrow{\alpha} \mathcal{I}_{C/X}(2) \rightarrow 0.$$
From $(ii)$ for $C$ it follows that $h^1(\mathcal{I}_{C/\mathbb{P}^{r+1}}(2))=0$,
hence to prove $(ii)$ for $X$ it is sufficient to show that the map $H^{0}(\alpha)$ is surjective. One has $\mathcal{I}_{C/X}(2) \cong \mathcal{O}_E(2\Sigma-S) \cong \mathcal{O}_E(\Sigma)$ since $S$ is a hyperplane section of $E$. On the other hand, $C$ is contained in $H$, thus $\mathcal{I}_{C/\mathbb{P}^{r+1}}(2)$ possesses global sections whose zero locus is a reducible hyperquadric split in two hyperplanes one of which is $H$. As a consequence, $H^{0}(\mathcal{I}_{C/\mathbb{P}^{r+1}}(2)) \supset H^{0}(\mathcal{O}_{\mathbb{P}^{r+1}}(1))$. Since $E$ is linearly normal, $H^{0}(\alpha)$ is surjective;
\item[$(iii)$]
The cohomology sequence of
\begin{equation}
\label{ix2opr+12}
0 \rightarrow \mathcal{I}_X(2) \rightarrow \mathcal{O}_{\mathbb{P}^{r+1}}(2) \rightarrow \mathcal{O}_X(2) \rightarrow 0
\end{equation}
gives $h^{1}(\mathcal{O}_X(2))=h^{2}(\mathcal{I}_X(2))$. Consider the Mayer-Vietoris sequence
\begin{equation}
\label{mvx}
0 \rightarrow \mathcal{I}_X(2) \rightarrow \mathcal{I}_{C/\mathbb{P}^{r+1}}(2) \oplus \mathcal{I}_{E/\mathbb{P}^{r+1}}(2) \rightarrow \mathcal{I}_{S}(2) \rightarrow 0.
\end{equation}
Since the points of $S$ impose independent conditions to hyperquadrics in $\mathbb{P}^{r+1}$, the exact sequence
$$0 \rightarrow \mathcal{I}_{S}(2) \rightarrow \mathcal{O}_{\mathbb{P}^{r+1}}(2) \rightarrow \mathcal{O}_{S}(2) \rightarrow 0$$
gives $h^{1}(\mathcal{I}_{S}(2))=h^{2}(\mathcal{I}_{S}(2))=0$.\\
Sequences analogous to (\ref{ix2opr+12}) for $C$ and $E$ give $h^{2}(\mathcal{I}_{C/\mathbb{P}^{r+1}}(2))=h^{1}(\mathcal{O}_C(2))=1$ and $h^2(\mathcal{I}_{E/\mathbb{P}^{r+1}}(2))=0$, hence (\ref{mvx}) gives $h^{1}(\mathcal{O}_X(2))=1$;
\item[$(iv)$] Since $C$ is linearly normal, by Lemma \ref{bdegenerate} one has $h^{1}(\mathcal{I}^2_{C/\mathbb{P}^{r+1}}(2))=0$. By Lemma \ref{i2}, there exists an exact sequence
\begin{equation}
\label{i2x2i2c2}
0 \rightarrow \mathcal{I}^2_{X}(2) \rightarrow \mathcal{I}^2_{C/\mathbb{P}^{r+1}}(2) \rightarrow \mathcal{A} \rightarrow 0
\end{equation}
where
\begin{equation}
\label{bai2cx2}
0 \rightarrow \mathcal{B} \rightarrow \mathcal{A} \rightarrow \mathcal{I}^2_{C/X}(2) \rightarrow 0
\end{equation}
with $\mathcal{B} \cong \mathcal{I}_{C/X} \otimes N^{\vee}_{X}(2) \cong {N^{\vee}_{X}}_{|E}(2\Sigma-S) \cong {N^{\vee}_{X}}_{|E}(\Sigma)$ and $\mathcal{I}^2_{C/X}(2) \cong \mathcal{O}_E(2\Sigma-2S) \cong \mathcal{O}_E$.\\
Serre duality gives $h^{0}({N^{\vee}_{X}}_{|E}(\Sigma))=h^{1}({N_{X}}_{|E}(-\Sigma))$. Let $l_i$, $i=1,...,h$, be the tangent line to $C$ at the point $p_i$ and let $v_i=\mathbb{P}(l_i)$. Let $W(-S) \subset W$ be the irreducible component of the subscheme parameterizing curves passing through $S$ which contains $[C]$.
Imposing to a curve parameterized by $W$ the passage through $S$ accounts for at most $h(r-1)$ conditions, hence $ \dim W(-S) \geq \dim W-h(r-1)$.
Imposing to a curve of $W(-S)$ a fixed tangency condition at $k=r+2-h$ points of $S$ accounts for at most $k(r-1)$ additional conditions. By the assumptions on $g$, one has $\dim W(-S)-k(r-1) \geq \dim W-(h+k)(r-1) \geq \chi(N_{C/\mathbb{P}^r})-(r+2)(r-1)=4(g-1)-(r+2)(r-1) \geq 0$. The fact that $[C]$ is a general point of $W$, combined with Remark \ref{h-1lgp}, then gives that, for $h \leq r+1$,
the 0-dimensional scheme $K := \left\{(p_i,v_i)\right\}_{i=1,...,k} \subset \mathbb{P}(N_{E/\mathbb{P}^{r+1}}(-\Sigma))$ is general, hence Lemma \ref{nye} applies and $h^{1}({N_{X}}_{|E}(-\Sigma))=0$.
The cohomology sequence of (\ref{bai2cx2}) then gives $h^0(\mathcal{A}) \leq 1$. Since $C$ is a nondegenerate curve in $H \cong \mathbb{P}^r$, one has $h^0(\mathcal{I}^2_{C/\mathbb{P}^{r+1}}(2))=1$ (the only hyperquadric whose singular locus contains $C$ is $2H$), hence the cohomology sequence of (\ref{i2x2i2c2}) gives $h^0(\mathcal{A})=1$ and $h^1(\mathcal{I}^2_{X}(2))=0$.
\end{itemize}
\end{proof}
\begin{prop}
\label{assumexsmoothable}
Notation as in Theorem \ref{properties}, assume that $X=X_h$ is smoothable. Then, it is smoothable to a half-canonical curve $\Gamma \subset \mathbb{P}^{r+1}$ satisfying properties $(*)$. In particular, the Gaussian map $\Psi_{\mathcal{O}_{\Gamma}(1)}$ is injective.
\begin{proof}
Let $\Gamma \subset \mathbb{P}^{r+1}$ be a general smoothing of $X$. One has $g(\Gamma)=p_a(X)=g(C)+h$ and $\deg(\Gamma)=\deg(C)+\deg(E_h)=g(C)-1+h=g(\Gamma)-1$. Since $\chi(\mathcal{I}_{\Gamma}(2))=\chi(\mathcal{I}_X(2))$ and $h^1(\mathcal{I}_X(2))=0$, sequence (\ref{ix2opr+12}) for $X$ and $\Gamma$ and the upper semicontinuity of the cohomology give $1=h^2(\mathcal{I}_X(2))=h^2(\mathcal{I}_{\Gamma}(2))=h^{1}(\mathcal{O}_{\Gamma}(2))$, hence $\mathcal{O}_{\Gamma}(2) \cong \omega_{\Gamma}$. Properties $(i)$ and $(iv)$ of $(*)$ follow from the upper semicontinuity of the cohomology too.
By Proposition \ref{h1i2c}, the Gaussian map $\Psi_{\mathcal{O}_{\Gamma}(1)}$ is injective.
\end{proof}
\end{prop}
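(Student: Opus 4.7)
The plan is to take a general smoothing $\Gamma \subset \mathbb{P}^{r+1}$ of $X$ in the given family and show that it inherits both the half-canonical structure and properties $(*)$ from the central fiber. The underlying idea is that each property in $(*)$ is cohomological and propagates in a flat family by upper semicontinuity, once one identifies $\mathcal{O}_\Gamma(1)$ with a theta characteristic.

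First I would record the numerics: since $X = C \cup E_h$ is the nodal union of $C$ and an elliptic normal curve of degree $h$ meeting transversally at $h$ points, $g(\Gamma) = p_a(X) = g(C) + h$ and $\deg(\Gamma) = \deg(C) + h = g(\Gamma) - 1$. Hence $\mathcal{O}_\Gamma(2)$ has degree $2g(\Gamma) - 2$, and to conclude $\mathcal{O}_\Gamma(2) \cong \omega_\Gamma$ it suffices to show $h^1(\mathcal{O}_\Gamma(2)) = 1$, by uniqueness of the line bundle of degree $2g-2$ and speciality $1$. Tensoring the ideal sheaf sequence of $\Gamma \subset \mathbb{P}^{r+1}$ by $\mathcal{O}(2)$ identifies $h^1(\mathcal{O}_\Gamma(2))$ with $h^2(\mathcal{I}_\Gamma(2))$, and the analogous identification for $X$ combined with Theorem \ref{properties} $(iii)$ gives $h^2(\mathcal{I}_X(2)) = 1$. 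Combining constancy of $\chi(\mathcal{I}_{\mathcal{X}_b}(2))$ in the flat family with upper semicontinuity of the cohomology dimensions and with the vanishing $h^1(\mathcal{I}_X(2)) = 0$ from Theorem \ref{properties} $(ii)$, one is forced to have both $h^1(\mathcal{I}_\Gamma(2)) = 0$ and $h^2(\mathcal{I}_\Gamma(2)) = 1$ on the general fiber.

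Properties $(i)$ and $(iv)$ of $(*)$ for $\Gamma$ then follow directly from upper semicontinuity of $h^1$ applied to $\mathcal{I}_X(1)$ and $\mathcal{I}^2_X(2)$ respectively, both of which vanish by Theorem \ref{properties}. Once $(*)$ is fully established for $\Gamma$, Proposition \ref{h1i2c} applies and yields injectivity of the Gaussian map $\Psi_{\mathcal{O}_\Gamma(1)}$.

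The only potentially delicate point is property $(iv)$, since the semicontinuity argument requires controlling $\mathcal{I}^2_{\mathcal{X}_b}(2)$ in the family and $\mathcal{I}^2$ is not \emph{a priori} flat over the base when the central fiber is singular; however, one only needs the upper bound $h^1(\mathcal{I}^2_\Gamma(2)) \leq h^1(\mathcal{I}^2_X(2)) = 0$, which holds by standard semicontinuity applied away from the locus where flatness could fail. The remaining steps are a straightforward propagation argument.
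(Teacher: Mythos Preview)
Your proposal is correct and follows essentially the same argument as the paper: record the numerics, use constancy of $\chi$ together with upper semicontinuity and $h^1(\mathcal{I}_X(2))=0$ to force $h^1(\mathcal{I}_\Gamma(2))=0$ and $h^2(\mathcal{I}_\Gamma(2))=1$, conclude $\mathcal{O}_\Gamma(2)\cong\omega_\Gamma$, then obtain $(i)$ and $(iv)$ by semicontinuity and invoke Proposition~\ref{h1i2c}. The flatness caveat you raise for $\mathcal{I}^2$ is not addressed in the paper either; since $X$ is a local complete intersection in $\mathbb{P}^{r+1}$ the issue can be bypassed by running semicontinuity through sequence~(\ref{eq1}) and the locally free conormal sheaf instead, but your outline is otherwise identical to the paper's.
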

\begin{prop}
\label{dimwrgh}
Notation as in Theorem \ref{properties}, let $W^{r+1}_{p_a(X),h} \subset \emph{Hilb}^{r+1}_{p_a(X),\deg(X)}$ be the (closure of the) locus parameterizing curves $X:=X_h$. Then $W^{r+1}_{p_a(X),h}$ is equidimensional of dimension $3p_a(X)-5+{r+3 \choose 2}$. In particular, if $X_h$ is smoothable to a curve $\Gamma$, then $W^{r+1}_{p_a(X),h}$ has codimension 1 inside $\emph{Hilb}^{r+1}_{p_a(X),\deg(X)}$.
\begin{proof}
We want to compute the number $n_h$ of parameters a curve $X_h:=C \cup E_h$ parameterized by an irreducible component of $W^{r+1}_{p_a(X),h}$ depends on.
Let $\mathcal{E} \subset \text{Hilb}^{h-1}_{1,h}$ be the subscheme parameterizing smooth elliptic normal curves containing $S_h$, and let $R_h \subset \mathcal{E}$ be the irreducible component containing $[E_h]$.
By Lemma \ref{h1nd-delta}, the normal bundle $N_{E_h/H^{\prime}}$ satisfies $h^1(N_{E_h/H^{\prime}}(-S_h))=0$.
Thus \cite[Lemma 2.4]{BABEF} assures that $R_h$ is smooth at $[E_h]$, and $\dim R_h=\dim_{[E_h]}\text{Hilb}^{h-1}_{1,h}-h(h-2)=h^2-(h^2-2h)=2h$.
Using Proposition \ref{assumexsmoothable} and Proposition \ref{hilbertscheme} one then obtains
\begin{align*}
n_h    &  =\underbrace{3g(C)-4+{r+2 \choose 2}}_{\dim W} + \underbrace{r+1}_{\textrm{choice of } H \subset \mathbb{P}^{r+1}} + \underbrace{2h-r-2}_{\dim V_h} + \underbrace{r+2-h}_{\textrm{choice of }H^{\prime} \supset <S_h>}+ \underbrace{2h}_{\dim R_h}=\\
 &  =3(g(C)+h)-4+{r+3 \choose 2}-1=\dim_{[\Gamma]} \text{Hilb}^{r+1}_{p_a(X),\deg(X)}-1
\end{align*}
\end{proof}
\end{prop}

The most difficult part of our argument is proving the smoothability of the curves $X:=X_h \subset \mathbb{P}^{r+1}$. Since we mostly deal with curves $X$ such that $h^1(N_X)>>0$ and $\chi(N_X)$ is less than the number of parameters counting locally trivial deformations of $X$, standard techniques fail to provide a proof in our situation. Nevertheless, exploiting the very specific fact that $\mathcal{O}_X(\omega_X-2H)$ has degree 0 on $X$, one can obtain a lower bound on $h^0(N_X)$ which, under delicate additional conditions, guarantees the existence of ``enough" embedded deformations for $X$.
\begin{lem}
\label{h0nx}
Notation as in Theorem \ref{properties}, let $\lfloor \frac{r}{2} \rfloor +2 \leq h \leq r+2$ and let $X_h:=C \cup E_h \subset \mathbb{P}^{r+1}$. Then one has $h^0(N_{X_h})=3p_a(X_h)-4+{r+3 \choose 2}$. If $h \geq \lfloor \frac{r}{2} \rfloor +3$, let $q_{h-1}$ be a general point of $E_{h-1}$, $p^{\prime}_h$ be a general point of $C$, $l:=<q_{h-1},p^{\prime}_h>$, $m:=<p_{h-1},p^{\prime}_{h}>$ and $Z:=X_{h-1} \cup m$. Then one has $h^0(N_{X_{h-1} \cup l})=3p_a(X_h)-4+{r+3 \choose 2}$. Moreover, if $g(C) \leq {r+2 \choose 2}-1$, $h^0(N_Z)=3p_a(X_h)-4+{r+3 \choose 2}$.
\end{lem}
\begin{proof}
Let $X:=X_h$, $E:=E_h$ and $S:=S_h$. One has $\mathcal{O}_X(\omega_X-2H)_{|C}=\mathcal{O}_C(S)$ and $\mathcal{O}_X(\omega_X-2H)_{|E}=\mathcal{O}_E(-S)$, hence, twisting the short exact sequence
$$0 \rightarrow \mathcal{O}_C(-S) \rightarrow \mathcal{O}_X \rightarrow \mathcal{O}_E \rightarrow 0$$
by $N_X(\omega_X-2H)$ one obtains the exact sequence
$$0 \rightarrow {N_X}_{|C} \rightarrow N_{X}(\omega_X-2H) \rightarrow {N_X}_{|E}(-S) \rightarrow 0.$$
One the other hand, (\ref{icnxn}) writes as
\begin{equation}
\label{nxes}
0 \rightarrow {N_X}_{|E}(-S) \rightarrow N_X \rightarrow {N_X}_{|C} \rightarrow 0.
\end{equation}
Computing cohomology of both sequences and using Theorem \ref{properties} one obtains $h^{0}(N_X) \geq h^{0}(N_X(\omega_X-2H))$.\\
Let's compute $h^0(N_X(\omega_X-2H))$. Serre duality gives $h^0(N_X(\omega_X-2H))=h^1(N^{\vee}_X(2))$.
Since $X$ is a locally complete intersection curve, one has $4(p_a(X)-1)=\chi(N_X)=\chi(N_X(\omega_X-2H))=-\chi(N^{\vee}_X(2))$. Since $X \subset \mathbb{P}^{r+1}$ is nondegenerate and $h^1(\mathcal{I}_X(2))=h^1(\mathcal{I}^2_X(2))=0$ by properties $(ii)$ and $(iv)$ of Theorem \ref{properties}, computing cohomology of sequence (\ref{eq1}) yields $h^0(N^{\vee}_X(2))=h^0(\mathcal{I}_X(2))=h^0(\mathcal{O}_{\mathbb{P}^{r+1}}(2))-h^0(\mathcal{O}_X(2))={r+3 \choose 2}-p_a(X)$, from which $h^1(N^{\vee}_X(2))=3p_a(X)-4+{r+3 \choose 2}$.\\
Now, consider the commutative exact diagram
\begin{equation}
\label{cxh}
\xymatrix{ & 0 \ar[d] & 0 \ar[d] & &\\ 0 \ar[r] & N_{C/H} \ar[r]^{\cong} \ar[d] & N_{C/H} \ar[r] \ar[d] & 0  \ar[d] & \\ 0 \ar[r]  & N_{C/\mathbb{P}^{r+1}} \ar[r] \ar[d] & {N_{X}}_{|C} \ar[r] \ar[d] & T^1_{X} \ar[r] \ar[d]^{\cong} & 0 \\0 \ar[r] & \mathcal{O}_C(\Sigma) \ar[d] \ar[r] & \mathcal{M} \ar[r] \ar[d] & T^1_{X} \ar[d] \ar[r] & 0 \\ & 0 & 0 & 0 &}
\end{equation}
where $\Sigma$ is a hyperplane section of $C$. An argument analogous to the one carried out in \cite[Lemma 3.2]{BE}, gives $\mathcal{M} \cong \mathcal{O}_C(\Sigma+S)$. Serre duality gives $h^0(\mathcal{O}_C(\Sigma+S))=h^1(\mathcal{O}_C(\Sigma-S))$. Since $<S> \cong \mathbb{P}^{h-2}$, one has $h^0(\mathcal{O}_C(\Sigma-S))=r+1-(h-1)=r+2-h$, hence Riemann-Roch theorem gives $h^1(\mathcal{O}_C(\Sigma-S))=-g(C)+1+h-1+g(C)+r+2-h=r+2$.
Taking cohomology of the central vertical sequence of diagram (\ref{cxh}) and using Proposition \ref{hilbertscheme}, one obtains $h^0({N_X}_{|C}) \leq 3g(C)-4+{r+2 \choose 2} + r+2$. From Theorem \ref{properties} one gets $h^0({N_X}_{|E}(-S))=3h$, hence taking cohomology of (\ref{nxes}) gives $h^0(N_X) \leq 3p_a(X)-4+{r+3 \choose 2}$.\\
Let now $E:=E_{h-1}$ and $S:=S_{h-1}$. The central vertical sequence of diagram (\ref{cxh}) gives $h^1({N_{X_{h-1}}}_{|C}) \geq 1$. Since by Lemma \ref{ehlsmoothable} and $(\star \star)_r$ $(ii)$ of Proposition \ref{r2secant} $X_{h-1} \cup l$ deforms to $X_h$, by the upper semicontinuity of the cohomology it is sufficient to show that $h^0(N_{X_{h-1} \cup l}) \leq 3p_a(X_h)-4+{r+3 \choose 2}$. Consider the exact sequence (see (\ref{icnxn}))
\begin{equation}
\label{nxh1l}
0 \rightarrow {N_{X_{h-1} \cup l}}_{|l}(-q_{h-1}-p^{\prime}_h) \rightarrow N_{X_{h-1} \cup l} \rightarrow {N_{X_{h-1} \cup l}}_{|X_{h-1}} \rightarrow 0
\end{equation}
Seeing ${N_{X_{h-1} \cup l}}_{|l}$ as a positive elementary transformation of $N_l \cong \mathcal{O}_{\mathbb{P}^1}(1)^{\oplus r}$ it is immediate to see that $h^0({N_{X_{h-1} \cup l}}_{|l}(-q_{h-1}-p^{\prime}_h))=2$. Let $b$ be the tangent line to $l$ at $p^{\prime}_h$ and let $v=\mathbb{P}(b)$. Up to deforming $C$ keeping the points of $S_h$ fixed, the point $(p^{\prime}_{h},v)$ is general as a point of $\mathbb{P}({N_{X_{h-1}}}_{|C})$, hence Proposition \ref{generalelementary} and Proposition \ref{isonormalelementary} give
\begin{equation}
\label{nxhlcnxhc}
h^1({N_{X_{h-1} \cup l}}_{|C}) = h^1({N_{X_{h-1}}}_{|C})-1.
\end{equation}
Twisting the exact sequence $0 \rightarrow \mathcal{O}_E(-S) \rightarrow \mathcal{O}_{X_{h-1}} \rightarrow \mathcal{O}_C \rightarrow 0$ by $N_{X_{h-1}}$ and using Theorem \ref{properties} one obtains $h^1(N_{X_{h-1}})=h^1({N_{X_{h-1}}}_{|C})$. Seeing ${N_{X_{h-1} \cup l}}_{|E}$ as a positive elementary transformation of ${N_{X_{h-1}}}_{|E}$ and using again Theorem \ref{properties} it is immediate to see that $h^1({N_{X_{h-1} \cup l}}_{|E}(-S))=0$, thus twisting the above sequence by ${N_{X_{h-1} \cup l}}_{|X_{h-1}}$ gives
$h^1({N_{X_{h-1} \cup l}}_{|X_{h-1}}) = h^1({N_{X_{h-1} \cup l}}_{|C})$. Equality (\ref{nxhlcnxhc}) then gives $h^0(N_{X_{h-1}}) = h^{0}({N_{X_{h-1} \cup l}}_{|X_{h-1}})-1$. Sequence (\ref{nxh1l}) then yields $h^0(N_{X_{h-1} \cup l}) \leq 2+ h^0(N_{X_{h-1}})+1=h^0(N_{X_h})$.\\
If $g(C) \leq {r+2 \choose 2}-1$, by Proposition \ref{hilbertscheme} one has that $h^1(N_{C/H}) \geq 1$. By construction $Z$ deforms to $X_{h-1} \cup l$, hence by the upper semicontinuity of the cohomology it is sufficient to show that $h^0(N_Z) \leq 3p_a(X_{h})-4+{r+3 \choose 2}$.
By \cite[(D.2) and the proof of Lemma D.1.3 $(ii)$]{SE} there exists a short exact sequence
\begin{equation}
\label{nveez}
0 \rightarrow {N^{\vee}_{Z}}_{|C \cup m} \rightarrow N^{\vee}_{C \cup m/\mathbb{P}^{r+1}} \rightarrow N^{\vee}_{C \cup m/Z} \rightarrow 0
\end{equation}
where $N^{\vee}_{C \cup m/Z} \cong \mathcal{I}_{C \cup m /Z}/\mathcal{I}^2_{C \cup m/Z} \cong \mathcal{O}_E(-S)/\mathcal{O}_E(-2S) \cong \mathcal{O}_S$.
Dualizing (\ref{nveez}) one obtains the short exact sequence
$$0 \rightarrow N_{C \cup m/\mathbb{P}^{r+1}} \rightarrow {N_{Z}}_{|C \cup m} \rightarrow \mathcal{O}_S \rightarrow 0$$
which fits into the commutative exact diagram
\begin{equation}
\label{czh}
\xymatrix{ & 0 \ar[d] & 0 \ar[d] & &\\ 0 \ar[r] & N_{C \cup m/H} \ar[r]^{\cong} \ar[d] & N_{C \cup m/H} \ar[r] \ar[d] & 0  \ar[d] & \\ 0 \ar[r]  & N_{C \cup m/\mathbb{P}^{r+1}} \ar[r] \ar[d] & {N_{Z}}_{|C \cup m} \ar[r] \ar[d] & \mathcal{O}_S \ar[r] \ar[d]^{\cong} & 0 \\0 \ar[r] & \mathcal{O}_{C \cup m}(\Sigma) \ar[d] \ar[r] & \mathcal{L} \ar[r] \ar[d] & \mathcal{O}_S \ar[d] \ar[r] & 0 \\ & 0 & 0 & 0 &}
\end{equation}
An argument analogous to the one used in \cite[Lemma 3.2]{BE} shows that $\mathcal{L}$ is a line bundle.
Twisting the short exact sequence $0 \rightarrow \mathcal{O}_m(-p-q) \rightarrow \mathcal{O}_{C \cup m} \rightarrow \mathcal{O}_C \rightarrow 0$ by $\mathcal{L}$ one obtains
\begin{equation}
\label{lmpq}
0 \rightarrow \mathcal{L}_{|m}(-p-q) \rightarrow \mathcal{L} \rightarrow \mathcal{L}_{|C} \rightarrow 0.
\end{equation}
Note that the restriction of last horizontal sequence of diagram (\ref{czh}) to $C$ is exactly the last horizontal sequence of diagram (\ref{cxh}) (written down for $X=X_{h-1}$), hence $\mathcal{L}_{|C} \cong \mathcal{O}_C(\Sigma+S)$. On the other hand, since by Riemann-Roch theorem one has $\chi(\mathcal{O}_{C}(\Sigma))=\chi(\mathcal{O}_{C \cup m}(\Sigma))=0$, it follows that $\chi(\mathcal{L}_{|m}(-p-q))=\chi(\mathcal{L}) - \chi(\mathcal{L}_{|C})=h-h=0$, from which one has that $\mathcal{L}_{|m}(-p-q) \cong \mathcal{O}_{\mathbb{P}^1}(-1)$.
Computing cohomology of sequence (\ref{lmpq}) then gives $h^{0}(\mathcal{L})=r+2$. Let us compute $h^0(N_{C \cup m/H})$. Let $b$ be the tangent line to $m$ at $p^{\prime}_{h}$ and let $v=\mathbb{P}(b)$. Up to deforming $C$ keeping $p_{h-1}$ fixed, we can assume that $(p^{\prime}_{h},v) \in \mathbb{P}(N_{C/H})$ is a general point, hence by Proposition \ref{generalelementary} the elementary transformation $\mathcal{F}$ of $N_{C/H}$ associated to $(p^{\prime}_{h},v)$ has $h^1(\mathcal{F})=h^1(N_{C/H})-1$.
The existence of an exact sequence $0 \rightarrow \mathcal{F} \rightarrow {N_{C \cup m/H}}_{|C} \rightarrow \mathcal{O}_{p_{h-1}} \rightarrow 0$ then gives $h^0({N_{C \cup m/H}}_{|C}) \leq h^0(N_{C/H})+1$. Using Proposition \ref{hilbertscheme} and computing cohomology of the exact sequence
\begin{equation}
\label{ncmh}
0 \rightarrow {N_{C \cup m/H}}_{|m}(-p-q) \rightarrow N_{C \cup m/H} \rightarrow {N_{C \cup m/H}}_{|C} \rightarrow 0
\end{equation}
one then obtains $h^0(N_{C \cup m/H}) \leq 2+3g(C)-4+{r+2 \choose 2}+1=3g(C)-1+{r+2 \choose 2}$, thus the central vertical sequence of diagram (\ref{czh}) gives $h^0({N_Z}_{|C \cup m}) \leq 3g(C)-1+{r+2 \choose 2} + r+2=3g(C)-1+{r+3 \choose 2}$.
In the end, consider the exact sequence $0 \rightarrow {N_Z}_{|E}(-S) \rightarrow N_Z \rightarrow {N_Z}_{|C \cup m} \rightarrow 0$. Again by \cite[(D.2) and the proof of Lemma D.1.3 $(ii)$]{SE} there exists a short exact sequence $0 \rightarrow {N^{\vee}_Z}_{|X_{h-1}} \rightarrow N^{\vee}_{X_{h-1}} \rightarrow \mathcal{I}_{X_{h-1}/Z}/\mathcal{I}^2_{X_{h-1}/Z} \rightarrow 0$, with $\mathcal{I}_{X_{h-1}/Z}/\mathcal{I}^2_{X_{h-1}/Z} \cong \mathcal{O}_m(-p^{\prime}_h)/\mathcal{O}_m(-2p^{\prime}_h) \cong \mathcal{O}_{p^{\prime}_h}$. Twisting by $\mathcal{O}_E(S)$ and dualizing one obtains ${N_Z}_{|E}(-S) \cong {N_{X_{h-1}}}_{|E}(-S)$. By Theorem \ref{properties} one has $h^0({N_{X_{h-1}}}_{|E}(-S))=3(h-1)$, thus $h^0(N_Z) \leq  3(h-1)+3g(C)-1+{r+3 \choose 2}=3(g(C)+h)-4+{r+3 \choose 2}=3p_a(X_{h})-4+{r+3 \choose 2}$.
\end{proof}
\begin{lem}
\label{sameirreduciblecomponent}
Notation as in Lemma \ref{h0nx} assume that, for a fixed $\lfloor \frac{r}{2} \rfloor +3 \leq h \leq r+2$ there exist smoothings $C_{h-1}, C_h$ of curves $X_{h-1}, X_h$, respectively. Let $n$ be a 2-secant line to $C_{h-1}$. Then the curves $C_{h}, C_{h-1} \cup n, X_{h-1} \cup l$ and $X_h$ are all parameterized by smooth points of the same irreducible component $W^{r+1}_{p_a(X_h)} \subset \emph{Hilb}^{r+1}_{p_a(X_h),p_a(X_h)-1}$, which has dimension $3p_a(X_h)-4+{r+3 \choose 2}$.
\end{lem}
\begin{proof}
by Lemma \ref{ehlsmoothable} and $(\star \star)_r$ $(ii)$ of Proposition \ref{r2secant}, one has that $X_{h-1} \cup l$ deforms to $X_h$. As a consequence, there exists an open neighborhood $U \subset \text{Hilb}^{r+1}_{p_a(X_h),p_a(X_h)-1}$ of $[X_{h-1} \cup l]$ such that $[X_h], [C_{h-1} \cup n], [C_h] \in U$. It is then sufficient to show that $[X_{h-1} \cup l]$ is a smooth point of $\text{Hilb}^{r+1}_{p_a(X_h),p_a(X_h)-1}$, which immediately follows from Proposition \ref{dimwrgh}, the fact that $[X_{h-1} \cup l]$ belongs to $W^{r+1}_{p_a(X_h),h}$ and the fact that $T_{[X_{h-1} \cup l]}\text{Hilb}^{r+1}_{p_a(X_h),p_a(X_h)-1}=h^{0}(N_{X_{h-1} \cup l})=3p_a(X_h)-4+{r+3 \choose 2}$ by Lemma \ref{h0nx}.
\end{proof}
\begin{teo}
\label{maineq}
For all integers $r \geq 2$ and $g$ satisfying
\begin{equation}
\label{intervallog}
\lfloor \frac{r^2+10r+1}{4} \rfloor := g(r) \leq g \leq {r+2 \choose 2}
\end{equation}
there exists a smooth (nondegenerate) half-canonical curve $\Gamma=\Gamma_{r,g} \subset \mathbb{P}^r$ satisfying properties $(*)$. In particular, the Gaussian map $\Psi_{\mathcal{O}_{\Gamma}(1)}$ is injective, and the pair $(\Gamma,\mathcal{O}_{\Gamma}(1))$ is parameterized by an irreducible component $\widetilde{V}_{r,g} \subset \mathcal{S}^r_{g}$ having expected codimension in $\mathcal{S}_{g}$.
\end{teo}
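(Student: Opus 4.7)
The plan is to prove Theorem \ref{maineq} by induction on $r$, with the base case $r=2$ handled directly by Proposition \ref{r=2}: since $g(2)=6=\binom{4}{2}$, only one genus is at issue, and the smooth plane quintic serves as $\Gamma_{2,6}$. Properties $(i)$--$(iii)$ of $(*)$ are immediate ($\mathcal{O}_C(2)\cong\omega_C$ on a plane quintic), and $(iv)$ follows from injectivity of $\Psi_L$ via Proposition \ref{h1i2c}.

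For the inductive step, I would first record the arithmetic identities $g(r+1)=g(r)+\lfloor r/2\rfloor+3$ and $\binom{r+3}{2}=\binom{r+2}{2}+(r+2)$, which show that every integer $g\in[g(r+1),\binom{r+3}{2}]$ admits a decomposition $g=g_0+h$ with $g_0\in[g(r),\binom{r+2}{2}]$ and $h\in[\lfloor r/2\rfloor+3,\,r+2]$. Fix such a decomposition. By the inductive hypothesis there is a curve $C:=\Gamma_{r,g_0}\subset H\cong\mathbb{P}^r\subset\mathbb{P}^{r+1}$ satisfying $(*)$; because $C$ is an inductive smoothing of the plane quintic, Remark \ref{quinticsatisfies} combined with iterated application of Proposition \ref{r2secant} yields $(\star\star)_h$ for $C$. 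I then form $X_h:=C\cup E_h$ as in Theorem \ref{properties}, by attaching a general elliptic normal curve of degree $h$ along a general divisor $S_h\in V_h$. Theorem \ref{properties} immediately gives that $X_h$ satisfies $(*)$ together with $h^1({N_{X_h}}_{|E_h}(-\Sigma))=0$, and Proposition \ref{assumexsmoothable} reduces the problem to showing that $X_h$ is smoothable in $\mathbb{P}^{r+1}$.

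Smoothability is established by a second (descending) induction on $h$, running from $h=r+2$ down to $h=\lfloor r/2\rfloor+3$. The inductive step is exactly Proposition \ref{smoothableinductiononh}: assuming smoothability of $X_{h+1}$, it deduces smoothability of $X_h$. For the descending induction to start, I need the base case $h=r+2$; here $H_{r+2}=\mathbb{P}^{r+1}$ and $E_{r+2}$ is an elliptic normal curve of degree $r+2$ spanning the whole ambient space and meeting $C$ in $r+2$ points in linearly general position. This is precisely the reducible configuration treated in \cite{BE}, Theorem 3.3, where $h^1(N_X)=0$ and smoothability follows from standard obstruction vanishing. To pass between different base curves $C=\Gamma_{r,g_0}$ and $C'=\Gamma_{r,g_0+1}$ as $g_0$ varies within the inductive range, I would invoke Lemmas \ref{lemmafibresofthesamefamily} and \ref{fibresofthesamefamily}, which ensure that the reducible curves $X_h$ built from neighbouring base curves lie in the same irreducible component of the Hilbert scheme, so that the hypotheses of Proposition \ref{smoothableinductiononh} are legitimately available at each stage.

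Once $X_h$ is smoothed, Proposition \ref{assumexsmoothable} produces $\Gamma:=\Gamma_{r+1,g}\subset\mathbb{P}^{r+1}$ half-canonical and satisfying $(*)$, so that Proposition \ref{h1i2c} gives injectivity of $\Psi_{\mathcal{O}_\Gamma(1)}$ and Fact \ref{ineq} gives the asserted component $\widetilde V_{r+1,g}\subset\mathcal{S}^{r+1}_g$ of expected codimension. The main obstacle in this program is clearly the smoothability of $X_h$ for $h$ strictly less than $r+2$: in that regime $h^1(N_{X_h})>0$ and $\chi(N_{X_h})$ falls short of the number of locally trivial deformation parameters, so direct obstruction-theoretic arguments do not apply and the descending induction of Proposition \ref{smoothableinductiononh} (itself relying on the delicate normal-bundle computations of Section \ref{technicallemmas} and the Hilbert-scheme smoothness Lemma \ref{hilbxreduced}) is indispensable; the whole inductive engine ultimately rests on being able to bootstrap from the case $h=r+2$ borrowed from \cite{BE}.
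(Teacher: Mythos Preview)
Your outline matches the paper's proof in architecture: induction on $r$ with the plane quintic as base, the attachment $X_h=C\cup E_h$ of Theorem~\ref{properties}, Proposition~\ref{assumexsmoothable} to pass from $X_h$ to the smooth $\Gamma$, and Proposition~\ref{r2secant}/Remark~\ref{quinticsatisfies} to propagate $(\star\star)_h$. The one imprecision is your treatment of the base case of the descending induction on $h$. You write that for \emph{any} $g_0$ the configuration $X^{g_0}_{r+2}$ is ``precisely the reducible configuration treated in \cite{BE}, Theorem~3.3, where $h^1(N_X)=0$''. That result in \cite{BE} only covers the single pair $(g_0,h)=\bigl(\binom{r+2}{2},\,r+2\bigr)$; for smaller $g_0$ one does not know a priori that $h^1(N_{X^{g_0}_{r+2}})=0$, and in fact the paper never claims this. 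The paper's proof therefore runs a genuine \emph{double} descending induction on $(g_0,h)$ starting from that one point: Proposition~\ref{smoothableinductiononh} gives the step $(g_0,h)\mapsto(g_0,h-1)$, while Lemma~\ref{fibresofthesamefamily} gives the step $(g_0,h-1)\mapsto(g_0-1,h)$ (both curves have the same arithmetic genus and lie in the same Hilbert-scheme component, so smoothability transfers). Your invocation of Lemmas~\ref{lemmafibresofthesamefamily} and~\ref{fibresofthesamefamily} ``to pass between different base curves'' is exactly this second leg, but it is doing more work than your phrasing suggests: it is what supplies the missing base cases $X^{g_0}_{r+2}$ for $g_0<\binom{r+2}{2}$, not merely a bookkeeping device to keep the hypotheses of Proposition~\ref{smoothableinductiononh} tidy. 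Once you reorganise the smoothability argument as this two-parameter descent from the single anchor point, your proof coincides with the paper's.
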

\begin{proof}
The proof is by induction on $r$. The base case ($r=2$) is given by Proposition \ref{r=2}, a quintic plane curve $C^6$ of genus $6$ such that $(C^6,\mathcal{O}_{C^6}(1))$ is a general point of $V=\widetilde{V}_{2,6}$. It is straightforward to verify that this curve satisfies the assumptions of Theorem \ref{properties}. Assume by induction that, for all $g(r) \leq g \leq {r+2 \choose 2}$, $C^g \subset H \cong \mathbb{P}^r$ is a smooth (nondegenerate) half-canonical curve of genus $g$ satisfying the assumptions of Theorem \ref{properties}.
In particular, the curve $C^g$ satisfies property $(\star \star)_r$ of Proposition \ref{r2secant}. Let then $X^g_h := C^g \cup E_h \subset \mathbb{P}^{r+1}$ be a curve constructed as in Theorem \ref{properties}: at every step of the induction process, one attaches to $C^g$ an elliptic normal curve intersecting it in $h$ points. By Theorem \ref{properties}, $X^g_h$ satisfies properties $(*)$.\\
For $\lfloor \frac{r}{2} \rfloor +3 \leq h \leq r+2$, the arithmetic genus of $X^g_h$ lies in the interval between
$$6+ \sum_{i=2,\, i \scriptsize{\hbox{ odd }}}^{r} \left(\frac{i+1}{2} +2\right) + \sum_{i=2,\, i \scriptsize{\hbox{ even }}}^{r} \left(\frac{i}{2}+3\right)=6+\sum_{i=2}^{r} \left(\frac{i}{2}+3\right)-\sum_{i=2, \, i \scriptsize{\hbox{ odd }}}^{r} \frac{1}{2}=$$
$$=\lfloor \frac{(r+1)^2+10(r+1)+1}{4} \rfloor:=g(r+1)$$
and
$$6+\sum_{i=2}^{r}\left(i+2\right)={(r+1)+2 \choose 2}.$$
We want to show that, for these values of $h$, $X^g_h$ is smoothable.
If $g={r+2 \choose 2}$ and $h=r+2$, $X^g_h$ is smoothable by \cite[Lemma 3.2]{BE} (it is the ``trivial" case, where $h^1(N_{X^g_h})=0$). In the other cases, suppose by contradiction that $X^g_h$ admits (as an embedded curve) only locally trivial deformations i.e. that, notation as in Proposition \ref{dimwrgh}, in a neighborhood of $[X^g_h]$ the Hilbert scheme $\text{Hilb}^{r+1}_{p_a(X^g_h),p_a(X^g_h)-1}$ is set-theoretically the locus $W^{r+1}_{p_a(X^g_h),h}$. We have to consider two cases:\\
\begin{itemize}
\item[{$\left[g \leq {r+2 \choose 2}-1\right]$}] Let $n$ be a 2-secant line to $C^g$. Let $Z=X^g_{h-1} \cup m$ and $X^g_{h-1} \cup l$ be as in Lemma \ref{h0nx}. By construction, Lemma \ref{ehlsmoothable} and $(\star \star)_r$ $(ii)$ of Proposition \ref{r2secant}, the curves $X^g_h , X^g_{h-1} \cup l$ and $Z$ are parameterized by the same irreducible component of $W^{r+1}_{p_a(X^g_h),h}$.
\item[{$\left[g={r+2 \choose 2}\right]$}] We have two subcases:\\
\begin{itemize}
\item[{$\left[r=3\right]$}] There is only one curve to consider, namely $X^{10}_4 \subset \mathbb{P}^4$, for which the proof goes on as in the case $r \geq 4$, provided that we define ``good" curves $C^9 \subset \mathbb{P}^3$ (which is not ``reached" by our induction argument, as its genus is too low) and $X^9_4 \subset \mathbb{P}^4$, and we prove that, if $n$ is a 2-secant line to $C^9$, one has that $C^9 \cup n$ and $C^{10}$ (which is obtained by smoothing $X^6_4 \subset \mathbb{P}^3$) are parameterized by the same irreducible component of $\text{Hilb}^{3}_{10,9}$.\\
    By Lemma \ref{h0nx}, one has $\dim_{[X^6_3]}\text{Hilb}^3_{9,8} \leq 33$. Then, by \cite[Proposition 4.7 and Theorem 3.7]{SA}, $X^6_3$ is smoothable to a complete intersection $C^9$ of a quadric and a quartic surface. By Remark \ref{r2secantr3} and Proposition \ref{assumexsmoothable}, $C^9$ satisfies the assumptions of Theorem \ref{properties} (with $(\star \star)_3$ of Proposition \ref{r2secant} holding only in the restricted range $4 \leq h \leq 5$). In particular, a curve $X^9_4$ is defined. Moreover, $C^9 \cup n$ is a 2-normal curve of arithmetic genus $10$ and degree $9$ in $\mathbb{P}^3$, hence it is smoothable to $C^{10}$ by \cite[IV, Example 6.4.3]{HA} (showing smoothability is an easy exercise).
\item[{$\left[r \geq 4\right]$}] Let $Z=X^{g-1}_h \cup m$ and $X^{g-1}_h \cup l$ be as in Lemma \ref{h0nx} (recall that we are assu-ming $h \leq r+1$). Let $n$ be a 2-secant line to $C^{g-1}$ (note that $C^{g-1}$ always exists). By Lemma \ref{sameirreduciblecomponent}, $C^{g-1} \cup n$ and $C^g$ belong to the same irreducible component of $\text{Hilb}^r_{g,g-1}$. By inductive assumption, the curve $X^{g-1-r}_r \subset \mathbb{P}^r$ is smoothable to $C^{g-1}$. Let $B_h \subset V^{h-1}_{h}(\mathcal{O}_{X^{g-1-r}_r}(1))$ and $V^{C^{g-1}}_h \subset V^{h-1}_{h}(\mathcal{O}_{C^{g-1}}(1))$  be the irreducible components exhibited in the proof of Proposition \ref{r2secant}. Now, $B_h$ and $V^{C^{g-1}}_h$ correspond in an obvious way to irreducible components (call then again $B_h$ and $V^{C^{g-1}}_h$) of $V^{h-1}_{h}(\mathcal{O}_{X^{g-1-r}_r \cup s}(1))$ and $V^{h-1}_{h}(\mathcal{O}_{C^{g-1} \cup n}(1))$, respectively, where $s$ is a general line meeting each of the two components of $X^{g-1-r}_r$ at one point. Let $\mathcal{X}_{(h)}$ be the $h$-symmetric product of the universal family $\mathbb{P}^{r+1} \times W^r_{p_a(X^{g-1-r}_{r+1})} \supset \mathcal{X} \xrightarrow{\Psi} W^r_{p_a(X^{g-1-r}_{r+1})}$, let $\mathcal{L} := \mathcal{O}_{\mathcal{X}}(1) \in \text{Pic}(\mathcal{X})$ be the relative hyperplane bundle and let $\mathcal{V}^{h-1}_h(\mathcal{L})$ be as in the proof of Proposition \ref{r2secant}. By inductive assumption and Lemma \ref{sameirreduciblecomponent}, the curves $X^{g-1-r}_r \cup s$, $X^{g-1-r}_{r+1}$, $C^{g-1} \cup n$ and $C^g$ are all fibres of $\Psi$. It is then clear from the proof of Proposition \ref{r2secant} 
    that $V^{C^g}_h$ and $V^{C^{g-1}}_h$ are restrictions to the respective fibres of $\Psi$ of an irreducible component of $\mathcal{V}^{h-1}_h(\mathcal{L})$ extending $B_h$, thus the divisor $S^{C^{g-1}}_h$ on $C^{g-1} \cup n$ (which is actually supported on $C^{g-1}$) deforms to the divisor $S^{C^g}_h$ on $C^g$.\\
    As a consequence, if there exists a partial smoothing of $X^{g-1}_h \cup n$ preserving the nodes at $S^{C^{g-1}}_h$, that curve deforms to $X^g_h$. In order to show that this partial smoothing exists,
    consider then the short exact sequence (see (\ref{icinxnprimex}))
    \begin{equation}
    \label{nxg1hn}
    0 \rightarrow {N_{X^{g-1}_h \cup n}}_{|E_h}(-S_h) \rightarrow N^{S_h}_{N_{X^{g-1}_h \cup n}} \rightarrow N_{C^{g-1} \cup n} \rightarrow 0.
    \end{equation}
    Since $n \cap E_h=\emptyset$, one has $h^1({N_{X^{g-1}_h \cup n}}_{|E_h}(-S_h))=h^1({N_{X^{g-1}_h}}_{|E_h}(-S_h))$, which is $0$ by Theorem \ref{properties}. Sequence (\ref{nxg1hn}) then yields the surjectivity of the map $\alpha: H^0(N^{S_h}_{N_{X^{g-1}_h \cup n}}) \rightarrow H^0(N_{C^{g-1} \cup n})$. A short exact sequence analogous to (\ref{ncmh}) gives $h^0(N_{C^{g-1} \cup n/H})=3g-4+{r+2 \choose 2}$, from which $h^0(N_{C^{g-1} \cup n})=3g-4+{r+2 \choose 2}+r+1$ and thus $h^0(N^{S_h}_{N_{X^{g-1}_h \cup n}})=3(g+h)-4+{r+2 \choose 2}+r+1$. By Proposition \ref{dimwrgh}, the latter number equals $\dim W^{r+1}_{p_a(X^g_h),h}$, which is exactly the number of parameters a deformation of $X^{g-1}_h \cup n$ preserving the nodes at $S_h$ depends on. Since (see Subsection \ref{dec}) $H^0(N^{S_h}_{X^{g-1}_h \cup n})$ is isomorphic to the tangent space at $[X^{g-1}_h \cup n]$ to the locally closed subscheme of $\text{Hilb}^{r+1}_{p_a(X^g_h),p_a(X^g_h)-1}$ which
    parameterizes deformations of $X^{g-1}_h \cup n$ preserving the nodes at $S_h$, that scheme must be smooth at $[X^{g-1}_h \cup n]$. As a consequence, the surjectivity of $\alpha$ gives that, provided $X^{g-1}_h$ is chosen in the suitable irreducible component of $W^{r+1}_{p_a(X^{g-1}_h),h}$ (i.e., notation as in the proof of Proposition \ref{dimwrgh}, $E_h$ is chosen in the suitable irreducible component $R_h$), the curves $X^{g-1}_h \cup n$, $X^g_h$ (and, by construction, $Z$) are parameterized by the same irreducible component of $W^{r+1}_{p_a(X^g_h),h}$.
\end{itemize}
\end{itemize}
\begin{centering}
\begin{figure}[h]
\centering
\includegraphics[width=75mm]{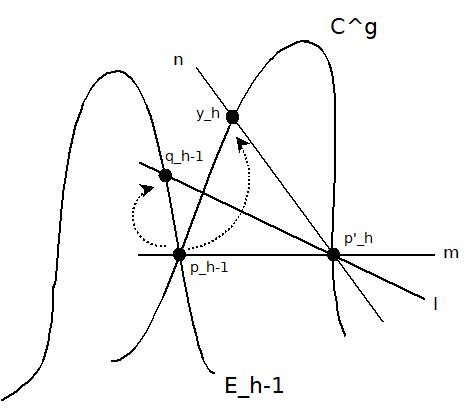}
\caption{if $X^g_h$ admits only locally trivial deformations, $X^g_{h-1} \cup l$ and $X^g_{h-1} \cup n$ are necessarily parameterized by two different irreducible components of $\text{Hilb}^{r+1}_{p_a(X^g_h),p_a(X^g_h)-1}$, whose intersection scheme contains $[Z]$. The dotted arrows repre-sent the two (flat) deformations of $Z$.}
\end{figure}
\end{centering}
Now, by \cite[Theorem 1.3.2]{TES}, the curves $\left\{\begin{array}{cc}
                                                 X^g_{h} \hbox{ and } X^g_{h-1} \cup n, & g \leq {r+2 \choose 2} -1 \\
                                                 X^{g-1}_{h} \cup l \hbox{ and } X^{g}_{h}, & g={r+2 \choose 2}
                                               \end{array}\right\}$
must necessarily be parameterized by two different irreducible components of $\text{Hilb}^{r+1}_{p_a(X^g_h),p_a(X^g_h)-1}$, whose intersection scheme contains $[Z]$ (see Figure 1, representing the case $g \leq {r+2 \choose 2} -1$, the other one is analogous), so that $\dim T_{[Z]} \text{Hilb}^{r+1}_{p_a(X^g_h),p_a(X^g_h)-1}=h^0(N_Z)$ must be greater than $h^0(N_{X^g_h})$, which by Lemma \ref{h0nx} is a contradiction.\\
Then, by Proposition \ref{dimwrgh}, $X^g_h$ must be parameterized by an irreducible component $W^{r+1}_{p_a(X^g_h)} \subset \text{Hilb}^{r+1}_{p_a(X^g_h),p_a(X^g_h)-1}$ such that $\dim W^{r+1}_{p_a(X^g_h)} \geq 3p_a(X^g_h)-4+{r+3 \choose 2}$. Lemma \ref{h0nx} yields equality and the fact that $[X^g_h]$ is a smooth point of $W^{r+1}_{p_a(X^g_h)}$ (and of $\text{Hilb}^{r+1}_{p_a(X^g_h),p_a(X^g_h)-1}$). Now, using Theorem \ref{properties} and Proposition \ref{hilbertscheme} to compute cohomology of the exact sequence (see (\ref{icinxnprimex}))
$$0 \rightarrow {N_{X^g_h}}_{|E_h}(-S_h) \rightarrow N^{\prime}_{X^g_h} \rightarrow N_{C^g} \rightarrow 0$$
one obtains $h^0(N^{\prime}_{X^g_h})=3p_a(X^g_h)-5+{r+3 \choose 2} < h^0(N_{X^g_h})$, hence Lemma \ref{critical} applies and $X^g_h$ is smoothable. Proposition \ref{assumexsmoothable} then gives that $X^g_h$ is smoothable to a half-canonical curve $\Gamma=\Gamma_{r+1,g+h} \subset \mathbb{P}^{r+1}$ satisfying properties $(*)$. In particular, the Gaussian map $\Psi_{\mathcal{O}_{\Gamma}(1)}$ is injective, hence, by Corollary \ref{cornag}, the pair $(\Gamma,\mathcal{O}_{\Gamma}(1))$ is parameterized by an irreducible component $\widetilde{V}_{r+1,g+h} \subset \mathcal{S}^{r+1}_{g+h}$ having expected codimension in $\mathcal{S}_{g+h}$. Moreover, by Proposition \ref{r2secant}, $\Gamma_{r+1,g+h}$ satisfies $(\star \star)_{r+1}$. Then one defines $C^{g+h}:=\Gamma_{r+1,g+h} \subset \mathbb{P}^{r+1}$ and the inductive step is proved.
\end{proof}
Proposition \ref{farkasinductive} (induction on $g$) applied on the components $\widetilde{V}_{r,g(r)} \subset \mathcal{S}^r_{g(r)}$ immediately gives the main result of the paper:
\begin{teo}
For all integers $r \geq 2$ and $g \geq g(r) := \lfloor \frac{r^2+10r+1}{4} \rfloor$, there exists an irreducible component $V_{r,g} \subset \mathcal{S}^r_{g}$ having expected codimension in $\mathcal{S}_{g}$. Moreover, the general point $(C,L) \in V_{r,g}$ has very ample $L$.
\end{teo}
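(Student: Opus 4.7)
The plan is to combine Theorem \ref{maineq} with Proposition \ref{farkasinductive}; the argument is essentially one line once both ingredients are in hand.

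First I would fix $r \geq 2$ and take $g_0 := g(r)$, the left endpoint of the interval appearing in Theorem \ref{maineq}. Specialising that theorem to $g = g_0$ produces a smooth linearly normal half-canonical curve $\Gamma \subset \mathbb{P}^r$ satisfying properties $(*)$, together with the information that $(\Gamma,\mathcal{O}_\Gamma(1))$ sits inside an irreducible component $\widetilde{V}_{r,g_0} \subset \mathcal{S}^r_{g_0}$ of the expected codimension $\binom{r+1}{2}$. Since by construction $\Gamma$ is embedded in $\mathbb{P}^r$ by the complete linear series $|\mathcal{O}_\Gamma(1)|$, the theta characteristic at the general point of $\widetilde{V}_{r,g_0}$ is very ample.

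Next I would feed this base component into Proposition \ref{farkasinductive}, taking the $V_{r,g_0}$ of that statement to be $\widetilde{V}_{r,g_0}$. The first assertion of Farkas's proposition then yields, for every $g \geq g(r)$, an irreducible component $V_{r,g} \subset \mathcal{S}^r_g$ of codimension $\binom{r+1}{2}$ in $\mathcal{S}_g$; the second (``moreover'') assertion, whose hypothesis has been verified in the previous paragraph, promotes this to the statement that the theta characteristic at the general point of $V_{r,g}$ is itself very ample. This establishes both conclusions of the theorem.

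No substantive obstacle arises at this stage: all the difficulty has already been absorbed into Theorem \ref{maineq} (the inductive construction of the reducible model $X = C \cup E_h$, the double descending induction on the pair $(h,g)$, the normal-bundle cohomology estimates, and the smoothability argument carried out through Lemma \ref{critical} and Lemma \ref{hilbxreduced}) and into Farkas's Proposition \ref{farkasinductive} (the inductive smoothing of stable spin structures on suitable reducible curves). The present theorem is simply the clean packaging of those two inputs into a single existence statement valid for every $g \geq g(r)$.
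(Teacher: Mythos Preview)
Your proposal is correct and matches the paper's own argument essentially verbatim: the paper simply states that Proposition~\ref{farkasinductive} applied to the base components $\widetilde{V}_{r,g(r)}$ produced by Theorem~\ref{maineq} ``immediately gives the main result''. Your additional remark that very ampleness of $\mathcal{O}_\Gamma(1)$ at the constructed point passes to the general point of $\widetilde{V}_{r,g(r)}$ (so that the ``moreover'' clause of Proposition~\ref{farkasinductive} applies) is exactly the implicit step the paper is relying on.
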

\begin{oss}
Taking the value $h(r) := \lfloor \frac{r}{2} \rfloor +3$, at every step of the induction, as the number of intersection points of the ``base" half-canonical curve $\Gamma_{r,g(r)} \subset \mathbb{P}^{r}$ with the elliptic normal curve, we find, by Theorem \ref{maineq}, smooth half-canonical curves of genus $g(r+1)$ in $\mathbb{P}^{r+1}$ having the desired properties and attaining the minimum possible increment in the genus with respect to $r$.\\
On the other hand, Theorem \ref{maineq} yields as well the existence of smooth half-canonical curves in $\mathbb{P}^r$ of genus $g(r) < g \leq {r+2 \choose 2}$, which correspond to smooth points of irreducible components $\widetilde{V}_{r,g} \subset \mathcal{S}^{r}_g$ having expected codimension in $\mathcal{S}_g$. It would be interesting to understand whether, for these values of $g$, one has $V_{r,g}=\widetilde{V}_{r,g}$.
\end{oss}
\end{section}
$\left.\right.$\\ \\
\textbf{Acknowledgements.}\\
This research project was partially supported by PRIN 2010-2011 ``Geometry of algebraic varieties" and by FIRB 2012 ``Moduli spaces and Applications".\\
The author thanks Edoardo Ballico, Claudio Fontanari, Trygve Johnsen, Roberto Pignatelli and Edoardo Sernesi for helpful discussions.
\small

\vspace{0.3cm}

\noindent
Luca Benzo \newline
Dipartimento di Scienze Matematiche ``Giuseppe Luigi Lagrange" \newline
Politecnico di Torino \newline
Corso Duca degli Abruzzi 24 \newline
10129 Torino, Italy. \newline
E-mail address: luca.benzo@polito.it

\begin{thebibliography}{plain}

\bibitem{ACG} E. Arbarello, M. Cornalba, P. Griffiths, \emph{Geometry of algebraic curves vol. II}, Springer-Verlag, 2011.

\bibitem{BABEF} E. Ballico, L. Benzo, C. Fontanari, \emph{Families of nodal curves in $\mathbb{P}^r$ with the expected number of moduli}, Bollettino dell'Unione Matematica Italiana 7-3 (2014), pp. 183-192.

\bibitem{BEA} A. Beauville, \emph{Prym varieties and the Schottky problem}, Invent. Math. 41 (1977), 149--196.

\bibitem{BE} L. Benzo, \emph{Components of moduli spaces of spin curves with the expected codimension}, Math.
Ann. 363-1/2 (2015), 385--392.

\bibitem{CI} C. Ciliberto, \emph{On the Hilbert scheme of curves of maximal genus in a projective space}, Math. Z. 194 (1987), 351--363.

\bibitem{CM} M. Coppens, G. Martens, \emph{Secant spaces and Clifford's theorem}, Compositio Math., 78-2 (1991), 193--212.

\bibitem{COR} M. Cornalba, \emph{Moduli of curves and theta-characterstics}, in Lectures on Riemann surfaces, Trieste
(1987), 560--589.

\bibitem{EL} L. Ein, R. Lazarsfeld, \emph{Stability and restrictions of Picard bundles, with an application to the normal bundles of elliptic curves}, in Complex Projective Geometry: selected papers, London Mathematical Society Lecture Note Series 179, Cambridge University Press, 1992.

\bibitem{FA} G. Farkas, \emph{Gaussian maps, Gieseker-Petri loci and large theta-characteristics}, J. reine angew. Math. 581 (2005), 151--173.

\bibitem{FP} G. Farkas, M. Popa, \emph{Effective divisors on $\overline{\mathcal{M}}_g$, curves on $K3$ surfaces, and the slope conjecture},
J. Alg. Geom. 14 (2005), 241--267.

\bibitem{HA} J. Harris, \emph{Theta-characteristics on algebraic curves}, Trans. Amer. Math. Soc. 271 (1982), 611--638.

\bibitem{HART}, R. Hartshorne, \emph{Algebraic geometry}, Springer Verlag, 1977.

\bibitem{HJ} M.E. Huibregtse, T. Johnsen, \emph{Local properties of secant varieties in symmetric products. Part I}, Trans. Amer. Math. Soc. 313-1 (1989), 187--204.



\bibitem{KL} J.O. Kleppe, \emph{Liaison of families of subschemes in $\mathbb{P}^n$}, in Algebraic Curves and Projective Geometry, Lec. Notes in Math. 1389, Springer-Verlag, 1989, 128--173.

\bibitem{KO} J. Koll\'ar, \emph{Rational curves on algebraic varieties}, Springer-Verlag, 1999.

\bibitem{MA} A. Mattuck, \emph{Secant bundles on symmetric products}, Amer. J. Math. 87 (1965), 779--797.

\bibitem{MU} D. Mumford, \emph{Theta-characteristics on algebraic curves}, Ann. Ecole Norm. Sup. 4-4 (1971), 181--192.

\bibitem{NAG} D. S. Nagaraj, \emph{On the moduli space of curves with theta-characteristics}, Compos. Math. 75 (1990), 287--297.



\bibitem{SA} I. Sabadini, \emph{On the Hilbert scheme of curves of degree $d$ and genus ${d-3 \choose 2}-1$}, Le Matematiche 55-2 (2000), 517--531.

\bibitem{SCHW} R.L.E. Schwarzenberger, \emph{The secant bundle of a projective variety}, Proceedings of the L.M.S. 14 (1964), 369--384.

\bibitem{Sernesi} E. Sernesi, \emph{On the existence of certain families of curves},
Invent. Math. 75 (1984), 25--57.

\bibitem{SE} E. Sernesi, \emph{Deformations of algebraic schemes}, Springer-Verlag, 2006.

\bibitem{TEI} M. Teixidor, \emph{Half-canonical series on algebraic curves}, Trans. Amer. Math. Soc. 302-1 (1987), 99--115.

\bibitem{TES} B. Tessier, \emph{R\'esolution simultan\'ee: I - Familles de courbes},  S\'eminaire sur les singularit\'es des surfaces no. 8 (1976-1977), 1--10.

\bibitem{WA} J. Wahl, \emph{Introduction to Gaussian maps on an algebraic curve}, in Complex Projective Geometry, London
Math. Soc. Lect. Notes Ser. 179 (1992), 304--323.

\end{thebibliography}
\end{document}